\newenvironment{customthm}[1]
  {\innercustomthm}
  {\endinnercustomthm}
\newcounter{margnotes}
\newcommand{\Mitya}[1]{\todo[size=\tiny,inline,color=cyan]{#1 \\ \hfill --- Mitya}}
\def\sideremark#1{\ifvmode\leavevmode\fi\vadjust{\vbox to0pt{\vss 
			\hbox to 0pt{\hskip\hsize\hskip1em           
				\vbox{\hsize3cm\tiny\raggedright\pretolerance10000
					\noindent #1\hfill}\hss}\vbox to8pt{\vfil}\vss}}}%
\newcounter{lemenumi}
\newcommand{\labelemenumi}{(\alph{lemenumi})}
\newtheorem{theorem}{Theorem}[section]
\newtheorem{lemma}[theorem]{Lemma}
\newtheorem{proposition}[theorem]{Proposition}
\newtheorem{corollary}[theorem]{Corollary}
\theoremstyle{definition}
\newtheorem{definition}[theorem]{Definition}
\newtheorem{problem}[theorem]{Problem}
\newtheorem{example}[theorem]{Example}
\theoremstyle{remark}
\newtheorem{remark}[theorem]{Remark}
\numberwithin{equation}{section}
\newcommand{\A}{\mathcal{A}}
\newcommand{\CC}{\mathcal{C}}
\def\de#1{\operatorname{\tau}\left(#1\right)}
\def\val#1{\operatorname{\lambda}\left(#1\right)}
\def\ev{\operatorname{ev}}
\def\ev{\operatorname{ev}}
\newcommand{\p}{\mathbb{P}} 
\newcommand{\scp}{\mathscr{P}} 
\newcommand{\pe}{P_\eta}
\newcommand{\id}{\mathrm{id}}
\newcommand{\sgn}{\mathrm{sgn}}
\newcommand{\OO}{\mathcal{O}}
\begin{document}
	\newcommand{\xc}{\xi}
	\newcommand{\yc}{\eta}
	\newcommand{\ep}{\epsilon}
	\newcommand{\jb}{II}
	\newcommand{\df}{d}
	\newcommand{\al}{\alpha}
	
	\newcommand{\testrat}{R}
	\newcommand{\testpol}{P}
	
	\newcommand{\pn}{Q}
	\newcommand{\qn}{Q_1}
	\newcommand{\ve}{\epsilon}
	\newcommand{\varx}{\mathrm{var}}
	\newcommand{\org}{T}
	\newcommand{\orgg}{t}
	\newcommand{\supp}{\text{supp}}
	\newcommand{\R}{\mathbb{R}}
	\newcommand{\C}{\mathbb{C}}
	\newcommand{\I}{\mathbb{I}}
	\newcommand{\N}{\mathbb{N}}
	\newcommand{\Z}{\mathbb{Z}}
	\newcommand{\Q}{\mathbb{Q}}
        \newcommand{\T}{\mathbb{T}}        
	\newcommand{\Xbar}{\bold{X}}
     \newcommand{\Hol}{\mathcal{H}}

	
	\newcommand{\edg}{\gamma^0}
	\newcommand{\vtx}{p}
	\newcommand{\CH}{{\mathbb{C}} H_1({\mathcal{O}})}
	\newcommand{\CHy}{{\mathbb{C}} H_1({\mathcal{O}_y})}
	\title[Melnikov functions]{Noetherianity and Length of Melnikov functions}
 \today

	\author[P. Marde\v si\'c]{Pavao Marde\v si\'c}
\address[a]{Universit\'e Bourgogne Europe, Institute de 
	Math\'ematiques de Bourgogne - UMR 5584 CNRS\\
	Universit\'e Bourgogne Europe,
	9, avenue Alain Savary,
	BP 47870, 21078 Dijon \\FRANCE}
    \address[a]{Laboratorio Internacional Solomon Lefschetz, IRL 2001, CNRS-UNAM, Instituto de Matematicas, Unidad Cuernavaca, México. }
\address[a]{ University of Zagreb, Faculty of Science, Department of Mathematics, Bijeni\v cka 30, 10 000 Zagreb, Croatia}
\email{pavao.mardesic@ube.fr}

\author[D. Novikov]{Dmitry Novikov}
\address[b]{Faculty of Mathematics and 
	Computer Science, Weizmann Institute of Science, Rehovot, 7610001\\Israel}
\address[b]{Institute for Advanced Study,
Princeton, NJ
08540 USA
}	\email{dmitry.novikov@weizmann.ac.il}

\author[L. Ortiz-Bobadilla]{Laura Ortiz-Bobadilla}
\address[c]{Instituto de Matem\'aticas, Universidad Nacional Aut\'onoma de 
M\'exico 	(UNAM), 	\'Area de la Investigaci\'on Cient\'ifica, Circuito 
exterior, Ciudad 	Universitaria, 04510, Ciudad de M\'exico, M\'exico}
\email{laura@im.unam.mx}

\author[J. Pontigo-Herrera]{Jessie Pontigo-Herrera}
\address[d]{Instituto de Matem\'aticas, Universidad Nacional Aut\'onoma de 
	M\'exico 	(UNAM), 	\'Area de la Investigaci\'on Cient\'ifica, Circuito 
	exterior, Ciudad 	Universitaria, 04510, Ciudad de M\'exico, M\'exico}
\email{pontigo@matem.unam.mx}
	
	\subjclass{34C07; 34C05, 34C08, 14D05, 16P40 }
	
\thanks{
The first author benefited from the support of the IRL 2001 Solomon Lefschetz CNRS-UNAM,  Croatian Science Foundation (HRZZ) grant IP-2022-10-9820, partial support by the Horizon grant 101183111-DSYREKI-HORIZON-MSCA-2023-SE-01,
This work was supported by Kovner Member Fund and IAS, by ISF grant 1167/17, Minerva grant 714141 and
 Papiit (Dgapa UNAM) IN103123}	

\begin{abstract}
    We study foliations in $\C^2$ given by polynomial deformations of the form $dH+\epsilon \eta=0$, with $\gamma(t)\subset H^{-1}(t)$ a family of cycles. The \emph{Poincaré first return map} is of the form $P(t)=t+\sum_j \epsilon^j M_j^\gamma(t).$ The functions $M_j^\gamma$ are called \emph{Melnikov functions} and are given by \emph{iterated integrals of orbit length} at most $j$. 

    We show that, for each $k\in\N$, there exists a \emph{universal Noetherianity index} $n_{\scriptscriptstyle H,\gamma}(k)$, independent of the deformation $\eta$, such that, if $M_j^\gamma\equiv0$, for $j=1,\ldots,n_{ H,\gamma}(k)$, then $M_j^\gamma$ is of orbit length $j-k$, for any Melnikov function $M_j^\gamma$.
    We call the smallest index with this property just the \emph{Noetherianity index} $\nu_{\scriptscriptstyle H,\gamma}(k)$.

    In order to prove this theorem, we develop a structure theorem for Melnikov functions and use the Ritt-Raudenbush differential algebra theorem.

    We calculate the universal Noetherianity index $n_{H,\gamma}(k)$ in various nontrivial examples. 
\end{abstract}

\maketitle
\providecommand{\keywords}[1]
{
  \small	
  \textbf{\textit{Keywords---}} #1
} 

\keywords
{Melnikov functions, iterated integrals, Noetherianity in differential algebras}
\section{Introduction}
Let $H$ be a polynomial, $H\in\R[x,y]$, and let  $\mathcal{F}_\epsilon$ be a foliation in $\mathbb{R}^2$ defined by a deformation 
\begin{equation*}
    dH+\epsilon\eta=0, 
\end{equation*}
where $\eta$ is a polynomial one-form and $\epsilon$ is a small parameter. Assume that the foliation $dH=0$ in $\mathbb{R}^2$ has a continuous family of cycles $\gamma(t)\subset \{H=t\}$. The infinitesimal version of the Poincaré center-focus problem and of Hilbert's 16th problem deal with the determination of the behavior of the family of cycles $\gamma(t)$ by the deformation $\mathcal{F}_\epsilon$. 
Will the curves of the family $\gamma(t)$ after perturbation remain as cycles? If not, how many limit cycles (i.e., isolated cycles) are born?

To study these problems, one considers the \emph{Poincaré first return map} 
\begin{equation}\label{eq.First-return}
    P_\epsilon^{\gamma}(t)=t+\epsilon M^\gamma_1(t)+\epsilon^2 M^\gamma_2(t)+\cdots
\end{equation}
of the foliation $\mathcal{F}_\epsilon$, with respect to $\gamma$.

Each fixed point of $P^\gamma_\epsilon$, corresponds to a cycle.
Now we complexify the problem. That is, we consider $H\in\C[x,y]$ and $\gamma(t)$ a loop in a regular fiber $\{H=t\}$, and $\eta$ a complex polynomial one-form. In the complex setting, there can also exist periodic, non-fixed points of $P^\gamma_\epsilon$, which also correspond to cycles of the foliation \cite{I69}.

The functions $M_j$ in \eqref{eq.First-return} were studied by Poincar\'e \cite{P90} and Pontryagin \cite{Po}, but are commonly called
 \emph{Melnikov funtions}. They are univalued in a neighborhood of a regular value of $H$, but multivalued in $\mathbb{C}\setminus\Sigma_H$, where $\Sigma_H\subseteq\C$ is the finite set of \emph{atypical values} of $H$.

The Melnikov functions contain essential information about what happens with the deformation of the continuous family $\gamma(t)$.  From the Fran\c coise algorithm  \cite{F,FP,G05,M22}  it is known that $M_j$ are \emph{iterated integrals of length} at most $j$. The complexity of these functions is codified by their length as iterated integrals \cite{C}. When the Poincaré map \eqref{eq.First-return}
is not equal to the identity, then the number of zeros of the first non-zero Melnikov function $M_j$ provides an upper bound for the fixed points of the Poincaré map $P_\epsilon(t)$, for $\epsilon$ small enough, in a neighborhood of a regular value.

In \cite{MNOP}, we have shown that 
the length of the first non-zero Melnikov function $M_j$, as an iterated integral is bounded by a constant depending only on the Hamiltonian $H$ and the orbit under monodromy of $\gamma$. That constant $\kappa$ was called \emph{the orbit depth}, and it was defined in terms of the position of the orbit of $\gamma$ with respect to the lower central sequence of the first homotopy group of the regular fiber of $H$. 
However, the orbit depth can also be infinity, as shown in \cite{MNOP2}.

\medskip
In this article, we deal with the problem of characterizing the length of all of the Melnikov functions in \eqref{eq.First-return}, not only the first non-zero.
We also give conditions under which this length can be reduced.

We first prove a structure Theorem \ref{Theo.M-sigma_k}, showing that any Melnikov function on any cycle can be expressed through Melnikov functions along some basic cycles and their derivatives.

The key discovery  in this paper is a new \emph{Noetherianity phenomenon} present in the set of Melnikov functions. More precisely, we introduce the notion of \emph{length with respect to the orbit} (shorter \emph{orbit length}) of Melnikov functions. We show that Melnikov functions can be expressed as evaluations of some differential operators depending on $H$ and $\gamma$ only. These operators act on a tuple of holomorphic functions dependent on $\eta$. By Theorem~\ref{Theo.M-sigma_k} and the Ritt-Raundenbush Theorem \ref{thm:Ritt-Raudenbush}, the algebra of such differential operators has some strong  Noetherian properties, which imply the drop of the orbit length of Melnikov functions. More precisely, we show in Theorem \ref{thm:main1} that, for any $k\in\mathbb{N}$, there exists a \emph{universal Noetherianity index} $n=n_{H,\gamma}(k)\in\mathbb{N}$, depending only on $H,\gamma$ and $k$ and independent of $\eta$, such that the identical vanishing of the first $n$ Melnikov functions implies that the orbit length of $M_j$ is at most $j-k$, for every $j\in\N$. We call the smallest such number $n$, the \emph{Noetherianity index} $\nu_{H,\gamma}(k).$

We provide examples in which we calculate the universal Noetherianity index $n_{H,\gamma}(k)$. One observes in the examples that we studied that this number distinguishes the degeneracy of the undeformed system given by $H$ and $\gamma$. We show that the universal Noetherianity index is nontrivial even in the case when the orbit depth $\kappa$ was infinite.

\section{Main results}

Let $H\in\C[x,y]$ be a  Hamiltonian, and $\gamma(t_0)\subset\{H=t_0\}$ a loop, where $t_0$ is a regular value of $H$. Let $<\delta_i>_{i=1}^m$ be a basis of cycles in the first homology group of the fiber $H^{-1}(t_0)$ and is therefore a set of generators of $L_1=\pi_1\{H=t_0\}$.

We consider a deformation $\mathcal{F}_\epsilon=\{dH+\epsilon\eta=0\}$, where $\eta$ is a polynomial one-form and $\epsilon\in\C$.

 Let $\Gamma=(\C,t_0)$ be a transversal to $H^{-1}(t_0)$ at a point $p_0\in \gamma(t_0)$, parametrized by the values of $H$. Consider the first return map in the foliation $\mathcal{F}_\epsilon$, with respect to the loop $\gamma(t_0)\in\pi_1(H^{-1}(t_0), p_0)$, 
 \begin{equation*}
     P^\epsilon_{\gamma}(t)=t+\epsilon M^\gamma_1(t)+\epsilon^2 M^\gamma_2(t)+\cdots .
 \end{equation*}
 
The Melnikov functions $M^\gamma_j(t)$ admit an explicit expression as follows. 
In a standard way we can identify $\pi_1(H^{-1}(t_0), p_0) $ with $\pi_1(H^{-1}(t), p) $, where $p=\Gamma\cap H^{-1}(t)$, for all $t$ sufficiently close to $t_0$. This defines a continuous family of loops $\gamma(t)\subset H^{-1}(t)$, and the Melnikov functions $M^\gamma_j(t)$ are evaluations of iterated integrals along this family of loops $\gamma(t)$, see \cite{F,FP,G05}.

More generally, the above identification defines the \emph{monodromy action} of the foliation on $\pi_1(H^{-1}(t_0), p_0)$. Take a  loop $\rho\subset\mathbb{C}\setminus\Sigma_H$, $\rho(0)=\rho(1)=t_0$,  where $\Sigma_H\subseteq\C$ is the set of atypical values  of $H$, and let the loop $\tilde\rho(s)\subset\C^2\setminus H^{-1}(\Sigma_H)$, $\tilde\rho(0)=\tilde\rho(1)=p_0$, be any lift of $\rho$, $\rho=H\circ\tilde\rho$. The (homotopy class of the) pair $(\rho, \tilde\rho)$ defines an automorphism $Mon_\rho: \pi_1(H^{-1}(t_0), p_0)\to \pi_1(H^{-1}(t_0), p_0)$, with different choices of $\tilde\rho$ defining automorphisms differing by an inner automorphism of $\pi_1(H^{-1}(t_0), p_0)$. The group of all such automorphisms is called the \emph{monodromy group}  of the foliation defined by $H$.

 Denote by $\OO$ the \emph{orbit} of $\gamma(t_0)$ in $\pi_1(H^{-1}(t_0), p_0) $ under the monodromy group of the fibration given by $H$, 
 and let  $L_i$ be the elements of the lower central sequence $L_{i+1}=[L_1, L_i]$, where $L_1=\pi_1(\{H=t_0\},p_0)$.
    Let $\sigma_{i_j}$ be the generators of the space 
\begin{equation}\label{O_i}
        \OO_i:=\frac{\OO\cap L_i}{\OO \cap L_{i+1}}\otimes\C, \quad i\in\N.
\end{equation}

By analytic continuation, the Melnikov functions $M_j$ extend to the whole orbit $\OO$.

\begin{definition}\label{def:length/orbit}
Let $\omega_1,\ldots,\omega_r$, be polynomial 1-forms in $\C^2$ and consider the iterated integral $I=\int\omega_1\cdots\omega_r$. We say that $I$ is an iterated integral \emph{of length $\ell$ with respect to the orbit $\OO$} or, shorter, of \emph{orbit length} $\ell$, if $\ell$ is the smallest integer such that the integral $I^\sigma(t)=\int_{\sigma(t)}\omega_1\cdots\omega_r$ identically vanishes, for all $i>\ell$ and all  $\sigma\in\OO_{i}$.
\end{definition}

If $\OO=\pi_1$ this notion coincides with the classical notion of length of iterated integrals, as iterated integrals of length $j$   are dual to the quotient ${L_{j}/L_{j+1}}$ by Chen's theory \cite{C}. Note that, the orbit length of an iterated integral is at most its length.

The main theorem proved in this article is the following

\begin{customthm}{A}\label{thm:main1}
Let $H\in\C[x,y]$ be a polynomial, and let $\gamma\subset H^{-1}(t_0)$ be a loop. Then, for any $k\in \N$, there exists some $n=n_{H,\gamma}(k)\in \N$ with the following property.

Consider the deformation
   \begin{equation}\label{eq:deformation}
       dH+\epsilon\eta=0,
   \end{equation}
where   $\eta$ is  a polynomial one-form in $\C^2$. 
If the orbit length of Melnikov functions $M^\gamma_j(t)$ of the Poincaré map of this deformation along $\gamma$ is at most $j-k$, for $1\le j\le n$, then   
the orbit length of  Melnikov functions $M^\gamma_j$ is at most $j-k$, for all $j\in\N$.

In particular, this holds if the Melnikov functions $M^\gamma_j(t)$ vanish identically, for $1\le j\le n$.

\end{customthm}

\begin{definition}
    \label{def:noether}
    We call the smallest $n_{H,\gamma}(k)$, verifying the conclusion of the theorem,  the \emph{Noetherianity index} of $H,\gamma,k$ and denote it $\nu_{\scriptscriptstyle  H,\gamma}(k)$.
\end{definition}

\begin{remark}
    The main information determining the Noetherianity index is topological, carried by the original system $dH$ and $\gamma$ and not by the deformation $\eta$. Instead of a polynomial deformation $\eta$, one can also take $\eta$ given by an entire form and  everything works exactly in the same way, with the same bounds. 
\end{remark}

\begin{remark}
    Note that, by Fran\c coise's algorithm \cite{F,G05,FP}, if $M_j$ is a Melnikov function of order $j$, then $M_j$ is given by an iterated integral of length at most $j$. The theorem shows that this length can be reduced arbitrarily, if the first nonzero Melnikov function is of a sufficiently high index.  Our result applies to any Melnikov function and not only the first non-zero Melnikov function. 
    
    Note, however, that we work with the orbit length of iterated integrals (see Definition \ref{def:length/orbit}) instead of the usual length. In Example \ref{exm:square}, we give a system for which we calculate the  index $n_{H,\gamma}(k)$.
     The same example shows that the orbit length of its Melnikov functions does not coincide with the usual length. It seems essential, for our results, to work with the orbit length, instead of the usual length.

    We stress that the Noetherianity index  $\nu_{H,\gamma}$ holds for any polynomial $\eta$. Such a bound could be lower for some particular families $\eta$ (e.g. polynomial forms of a given degree).

\end{remark}

In order to prove the main result, Theorem \ref{thm:main1}, we prove a structure Theorem~\ref{Theo.M-sigma_k}:
\begin{customthm}{B}\label{Theo.M-sigma_k}
 For each $\ell\ge1$, and $\sigma_k\in\OO_k$, $k\in \N$, the Melnikov function $M_\ell^{\sigma_k}$ can be expressed as a polynomial in 
 $M_j^{\delta_i}$, $1\leq j\leq \ell-(k-1)$ and its derivatives of order at most $\ell-(k-1)$. This polynomial can be chosen to be independent of $\eta$.
\end{customthm}

This theorem expresses Melnikov functions on a cycle in the orbit as a differential polynomial in  some basic Melnikov functions. The key fact is that the set of these functions needed to describe restrictions of $M_\ell$ to $\OO_k$ depends only on $\ell-k$ basic Melnikov functions.  Our result makes the results of Fran\c coise and Pelletier \cite{FP} more precise, in particular with respect to the position of the cycle in the filtration $\OO_i$. 

Each one of these theorems has its universal counterpart: Theorem   \ref{thm:abstract2.3} and Theorem \ref{thm: thm universal} expressed in abstract differential algebra without the appearance of the deformation form $\eta$. The original theorems are then obtained from the corresponding universal theorems by evaluation for any form $\eta$.

\section{Idea of the proof of Theorem \ref{thm:main1}}\label{sect:Idea}
Our key argument is the following theorem:
\begin{theorem}[Ritt-Raudenbush basis theorem, {\cite{[R]}, \cite{K57}}]\label{thm:Ritt-Raudenbush}
Let $\mathcal{A}$ be a finitely generated differential algebra. Then, every strictly increasing chain of radical differential ideals is finite.
\end{theorem}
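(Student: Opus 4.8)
The statement is the classical Ritt--Raudenbush basis theorem, and the plan is to deduce it from its equivalent \emph{finite-basis} form: every radical differential ideal of $\mathcal A$ equals $\{f_1,\dots,f_r\}$, the smallest radical differential ideal containing some finite set (call such an ideal \emph{perfectly finitely generated}). The finite-basis form implies the ascending chain condition at once, since the union of a chain of radical differential ideals is again a radical differential ideal, hence of the form $\{f_1,\dots,f_r\}$, and a large enough member of the chain already contains $f_1,\dots,f_r$; conversely a radical differential ideal that is not perfectly finitely generated obviously yields an infinite strictly increasing chain. Writing $\mathcal A$ as a quotient of a differential polynomial ring $F\{y_1,\dots,y_n\}$ over its base field $F$ of characteristic zero --- which is essential --- it is enough to prove the finite-basis property for $F\{y_1,\dots,y_n\}$, and I would do this by induction on $n$: the inductive step is that if a differential ring $R$ has the finite-basis property for its radical differential ideals, then so does $R\{y\}$; the base case $R=F$ is trivial.

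For the inductive step, suppose it fails and let $\mathcal N$ be the nonempty set of radical differential ideals of $R\{y\}$ that are not perfectly finitely generated. Zorn's lemma provides a maximal element $\mathfrak p\in\mathcal N$: the union of a chain in $\mathcal N$ is a radical differential ideal and still lies in $\mathcal N$, for if it equalled $\{f_1,\dots,f_r\}$ all $f_i$ would already lie in one member of the chain, making that member perfectly finitely generated. The crux is that $\mathfrak p$ must be differentially prime. This rests on the characteristic-zero identity
\[
 \{\mathfrak c, ab\}\;=\;\{\mathfrak c, a\}\cap\{\mathfrak c, b\}
\]
for any radical differential ideal $\mathfrak c$ and any $a,b$; its non-obvious inclusion follows from the fact that $ab$ lying in a radical differential ideal forces each $a^{(i)}b^{(j)}$ to lie there (differentiate and use radicality, e.g. $(a'b)^2=(ab)'(a'b)-(ab)(a'b')$), together with the characteristic-zero fact that the radical of a differential ideal is again differential. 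If $\mathfrak p$ were not prime, pick $a,b\notin\mathfrak p$ with $ab\in\mathfrak p$; then $\{\mathfrak p,a\}$ and $\{\mathfrak p,b\}$ strictly contain $\mathfrak p$, so are perfectly finitely generated by maximality, whence $\mathfrak p=\{\mathfrak p,ab\}=\{\mathfrak p,a\}\cap\{\mathfrak p,b\}$ is perfectly finitely generated too (generated by the pairwise products of the two finite generating sets) --- a contradiction.

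It remains to contradict the existence of a differentially prime $\mathfrak p$ that is not perfectly finitely generated. Here I would use characteristic sets. The radical differential ideal $\mathfrak q:=\mathfrak p\cap R$ is perfectly finitely generated by the inductive hypothesis, so, replacing $R$ by the Ritt--Noetherian domain $R/\mathfrak q$ and then by its field of fractions $K$ (reductions to be undone at the end using the finite generation of $\mathfrak q$), we may assume the coefficient ring is a field $K$, over which ordinary polynomial rings are Noetherian by Hilbert. Fix an orderly ranking of $y,y',y'',\dots$ and choose $A\in\mathfrak p\setminus\{0\}$ of minimal rank (lowest leader $y^{(r)}$, then lowest degree in it), which by primeness and minimality may be taken irreducible, so that its initial $I$ and separant $S$, of rank below $A$, lie outside $\mathfrak p$. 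Ritt's differential pseudo-division shows every $P\in\mathfrak p$ satisfies $(IS)^N P\in[A]$ for some $N$, since the pseudo-remainder is an element of $\mathfrak p$ of rank below $A$, hence $0$; thus $\mathfrak p=\{A\}:(IS)^\infty$. Feeding this into Ritt's decomposition $\{A\}=\big(\{A\}:S^\infty\big)\cap\{A,S\}$ and recursing on the term $\{A,S\}$, which is controlled by a characteristic-set element of strictly smaller rank --- with the ordinary Hilbert basis theorem for the finitely many variables $y,\dots,y^{(r)}$ over $K$ at the bottom of the recursion --- one concludes that $\mathfrak p$ is perfectly finitely generated, the desired contradiction. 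Hence $\mathcal N=\emptyset$, and the induction, and the theorem, follow.

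The main obstacle --- and the real content of the Ritt--Raudenbush theorem --- is this last step: the characteristic-set machinery, the pseudo-reduction bookkeeping with initials and separants, the termination of the rank recursion built on $\{A\}=(\{A\}:S^\infty)\cap\{A,S\}$, and the passage to a field of fractions that makes the ordinary Hilbert basis theorem applicable. Everything before it --- the finite-basis reformulation, the Zorn argument, the primeness of $\mathfrak p$ --- is soft. I would also stress that characteristic zero cannot be dropped: it is used essentially in the identity $\{\mathfrak c,ab\}=\{\mathfrak c,a\}\cap\{\mathfrak c,b\}$ and in the statement that the radical of a differential ideal is differential, and the theorem genuinely fails in positive characteristic; so ``finitely generated differential algebra'' is to be read over a $\Q$-algebra.
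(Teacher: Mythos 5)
This theorem is not proved in the paper at all: it is quoted from Raudenbush \cite{[R]} and Kaplansky \cite{K57}, so there is no internal argument to compare yours with, and what you propose is essentially the classical Raudenbush--Kaplansky proof. Its soft half is correct and complete: the equivalence of the ascending chain condition with perfect finite generation, the Zorn argument (a union of a chain of radical differential ideals is again one, and lies in $\mathcal N$ if its members do), the primality of a maximal non-finitely-based radical differential ideal via $\{\mathfrak c,ab\}=\{\mathfrak c,a\}\cap\{\mathfrak c,b\}$, the identity $(a'b)^2=(ab)'(a'b)-(ab)(a'b')$, the rule $\{F\}\cap\{G\}=\{FG\}$, and the characteristic-zero caveat (harmless here, since the paper's $\mathcal A$ is a free differential $\C$-algebra) are all standard and correctly deployed.

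The genuine gap is in the finale, exactly the step you yourself flag as ``the real content''. From a minimal-rank irreducible $A\in\mathfrak p$ you correctly obtain $\mathfrak p=[A]:(IS)^\infty$, but this does not by itself make $\mathfrak p$ perfectly finitely generated: a saturation of a finitely based perfect ideal is not obviously finitely based, and the recursion you propose through Ritt's decomposition $\{A\}=(\{A\}:S^\infty)\cap\{A,S\}$ does not close the argument --- that identity decomposes $\{A\}$, which is trivially finitely based (by $A$ itself), and finite basedness of an intersection gives no information about its factors; moreover identifying $\mathfrak p$ with the general component $\{A\}:S^\infty$ and using its primality is Ritt's component theorem, a deeper result that the basis theorem neither needs nor should rest on. The classical finish uses the maximality of $\mathfrak p$ a second time: since $I,S\notin\mathfrak p$, the perfect ideal $\{\mathfrak p,IS\}$ strictly contains $\mathfrak p$, hence has a finite basis, which by the finitary character of the closure operator may be taken of the form $p_1,\dots,p_m,IS$ with $p_i\in\mathfrak p$; pseudo-division gives $(IS)^N f\in[A]+[Q]$ for every $f\in\mathfrak p$ (with $Q$ a finite basis of $\mathfrak p\cap R$), hence $f\cdot IS\in\{A,Q\}$ by radicality, and the elementary lemma that $f\in\{B,c\}$ together with $fc\in\{B\}$ forces $f\in\{B\}$ then yields $\mathfrak p=\{A,Q,p_1,\dots,p_m\}$, the desired contradiction. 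Relatedly, your passage to the fraction field $K$ of $R/\mathfrak q$ is not innocuous: perfect generation computed in $K\{y\}$ descends to $(R/\mathfrak q)\{y\}$ only up to saturation by nonzero denominators, which is the same difficulty in another guise; the standard proofs avoid it by running the pseudo-division argument over $R$ itself. So the outline is the right one, but as written the decisive step would not go through without being replaced by the second maximality argument (or an equivalent).
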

We stress that $\mathcal{A}$ is not necessarily differentially Noetherian: the radicality requirement above is necessary. For example, the ascending chain of differential ideals $I_1\subset I_2\subset\cdots \subset I_n\subset\cdots$, where $I_n=<x^2,(x')^2,\dots,(x^{(n)})^2>$, where $x^{(n)}=\frac{d^n x}{dt^n}$, does not stabilize.

The  strategy of the proof is based on the following pairing matrix:
\begin{equation}\label{table-M_i}    
    \centering
   \begin{array}{c|ccccc}
           &\OO _1  & \OO _2 & \OO _3&\OO_4&\cdots\\
           \hline%
        M_1 & M^1_1 & 0 & 0 &0&\cdots \\
         M_2& M^1_2 & M^{2}_2 &  0&0 &\cdots\\
         M_3&  M^1_3&  M^{2}_3& M^{3}_3 &0 &\cdots\\
         M_4&  M^1_4&  M^{2}_4& M^{3}_4 & M^{4}_4&\cdots\\
        \vdots& \vdots & \vdots & \vdots &\vdots &\ddots\\
    \end{array},
\end{equation}
where $M_j^{i}$ means the restriction of the Melnikov function of order $j$, to the quotient $\OO_i=\frac{\OO\cap L_i}{\OO\cap L_{i+1}}$. Note that the paring matrix is lower triangular, because $M_j$ is an iterated integral of length at most $j$, and hence vanishes on $\OO_i$, for $i>j$.

The table should be understood in the following sense: we are interested in the successive annihilation of the diagonals. First note that the elements in the first diagonal are well defined since $M_j$ vanishes on $\OO\cap L_{j+1}$ due to the length of $M_j$. Now, elements in the second diagonal are not well defined since they depend on the representative of the cycle in the quotient. However, assuming the vanishing of the first diagonal they become well defined. The same holds for any successive diagonal.

For each $i$, choose a basis $\beta_i:=\{\sigma_{i,\alpha}\}$ of $\OO_i$. Assuming the vanishing of the first $(r-1)$ diagonals, for $j=i+r-1$, the functions $M_j^{\sigma_{i,\alpha}}(t)$, which belong to the $r$-th diagonal, are well-defined, for all of the elements of the basis of $\OO_i$, as $M_j$ identically vanishes on $\OO \cap L_{i+1}$.

In order to prove Theorem \ref{thm:main1}, we want to use the Ritt-Raudenbush theorem. We hence need a universal differential algebra. This algebra is given by Theorem \ref{Theo.M-sigma_k}.

Theorem~\ref{Theo.M-sigma_k} claims that, for all $i\in\N$, the functions $M_{i+r-1}^{\sigma_{i,\alpha}}(t)$ are differential polynomials in some fixed set of functions, depending on $H$, $\eta$, and $r$ only:
\begin{equation}
   M_{i+r-1}^{\sigma_{i,\alpha}}(t)=m_{r,i,\sigma_{i,\alpha}}(M^{\delta_i}_j),\quad 1\le i\le m,\,1\le j \le r,
\end{equation}
where $\{\delta_i\}_{i=1}^m$ is a basis for the first homology group of the regular fiber $\{H=t\}$.
Moreover, these differential polynomials $m_{r,i,\sigma_{i,\alpha}}$  are universal in the sense that they depend only on $r$, $H$ and $\sigma_{i,\alpha}$, i.e. do not depend on $\eta$. In other words, the function $M_{i+r-1}^{\sigma_{i,\alpha}}(t)$ is a result of the evaluation of some independent of $\eta$ differential polynomial in $mr$ variables on an $mr$-tuple of germs of holomorphic functions defined by $\eta$ and independent of $i$ and $\sigma_{i,\alpha}$. In this way, the dependency of $M_{i+r-1}^{\sigma_{i,\alpha}}(t)$ on the cycle and the foliation is decoupled from its dependency on the perturbation.

\begin{example} By \cite[Proposition 3.4]{MNOP2} every $M_j^{\sigma_j}$, with $\sigma_j\in\OO_j$, is given by the Wronskian of abelian integrals $\int_{\delta_i}\eta$. These Wronskians define the differential polynomials given in Theorem  \ref{Theo.M-sigma_k}, for the case $k=1$.

   In particular, for a cycle ${\sigma_2}=[\delta_1,\delta_2]\in L_2$, the Melnikov function $M_2^{\sigma_2}(t)$ is given by the Wronskian 
\begin{equation*}
    M_2^{\sigma_2}(t)=W\left(\int_{\delta_1}\eta,\int_{\delta_2}\eta\right),
\end{equation*}
which is the formal differential polynomial $$W(m_1(\delta_1),m_1(\delta_2))=m_1(\delta_1)m_1(\delta_2)'-m_1(\delta_1)'m_1(\delta_2)$$ evaluated at the tuple $\left(\int_{\delta_1}\eta, \int_{\delta_2}\eta\right)$. The Wronskian $W$ is a differential polynomial $W(m_1(\delta_1),m_1(\delta_2))$  lying in a free differential $\C$-algebra $\mathcal{A}_1$ generated by abstract symbols $m_{1}(\delta_i)$, and their formal derivatives, and this differential polynomial $W$ is the same for any perturbation $\eta$. 

The functions $M_1^{\delta_i}=\int_{\delta_i}\eta$ are obviously dependent on $\eta$, and this is the only place where the dependence of $M_2^\sigma(t)$ on $\eta$ comes from.

More generally,  for any $\sigma_{j}\in\OO_{j}$, the corresponding Melnikov functions $M^{\sigma_j}_j(t)$ can be expressed as the evaluations of a polynomial
$m_j(\sigma_j)\in\A_1$ at the tuple $(M_1^{\delta_1},\dots,M_1^{\delta_m})$. 

Given a commutator $\sigma_j$ of length $j$, the corresponding differential polynomial $m_j(\sigma_j)$ is given by the iterated Wronskian, respecting the structure of the commutators in $\sigma_j.$ The product of commutators corresponds to a sum of the corresponding iterated Wronskians. Thus, the polynomial $m_j(\sigma_j)$ is independent of $\eta$.
Theorem~\ref{Theo.M-sigma_k} generalizes the above study, for $k\geq1$. 
\medskip

Now, we give a sketch of the proof of Theorem \ref{thm:main1}, for $k=1$. 
The Ritt-Raudenbush basis theorem claims that there exists a finite tuple of polynomials $m_{j'}(\sigma_{j'\ell})\in \A_1$, with $\sigma_{j'\ell}\in\OO_{j'}$, with the property that  any $m_j(\sigma_j)$, with $j>1$ and $\sigma_{j}\in\OO_{j}$, raised to some power, is a polynomial combination of these polynomials
\begin{equation}\label{eq:RR relation in example}
\left(m_j(\sigma_j)\right)^{d_j}=
\sum_{1\le j'\le n}\sum_{1\le \ell'\le \ell'_{j'}}\sum_{\alpha\ge 0}
q_{j'\ell'\alpha}\left(m_{j'}(\sigma_{j'\ell'})\right)^{(\alpha)}.
\end{equation}
Here $\left(m_{j'}(\sigma_{j'\ell'})\right)^{(\alpha)}\in\A_1$ are derivatives of $m_{j'}(\sigma_{j'\ell'})$ of order $\alpha$ and $q_{j'\ell'\alpha}\in \A_1$.

The evaluation at the tuple $(M_1^{\delta_1},\dots,M_1^{\delta_m})$ will preserve the relation~\eqref{eq:RR relation in example} and will send $m_{j'}(\sigma_{j'\ell'})$ to $M_{j'}^{\sigma_{j'\ell'}}$. 
Thus, if $M_{j'}^{\sigma_{j'\ell'}}$ vanish identically for a given $\eta$, then the evaluation of $m_j(\sigma_j)$ (i.e., $M_j^{\sigma_j}$) will vanish as well. In other words, the orbit lengths of $M_j^{\sigma_j}$ are at most $j-1$.
\medskip
\end{example}

The  differential algebra $\A_r$, for $1\leq r\leq k$, generated by $\{m_{j}(\delta_i)\}_{1\le i\le m, 1\le j\le r}$, plays the same role: for every $\sigma\in \OO_{\ell-r+1}$, there is a polynomial $m_{\ell}(\sigma)\in\A_r$ such that
the evaluation  
\begin{equation}\label{eq:ev}
\ev_\eta:\A_r\rightarrow\A^{\eta}_r, \ \ \ev_\eta(m_{j}(\delta_i))=M^{\delta_i}_j
\end{equation}
sends $m_{\ell}(\sigma)\in\A_r$ to $M^{\sigma}_\ell$.
Here, $\Hol$ denotes the differential algebra of the germs of holomorphic functions $(\C,0)\to\C$ and $\A^{\eta}_r\subset \Hol$ is its differential subalgebra generated by the first $r$ Melnikov function $M^{\delta_i}_j$ along the basis $\{\delta_i\}$,  $1\le i\le m$ and $1\le j \le r$.  

Using the Ritt-Raudenbush Theorem \ref{thm:Ritt-Raudenbush} and the same argument as above, we will prove Theorem~\ref{thm:main1}.

\subsection{Structure of the article}
In Section \ref{sect.Mel.on.Comm}, we study properties of Melnikov functions corresponding to the first return map of the deformation $dH+\epsilon\eta=0$, with respect to the commutators $\sigma_\ell\in\OO_\ell$ belonging to the orbit of $\gamma$ and we introduce the \emph{universal holonomy}. We also define two algebraic invariants: \emph{length} and \emph{triangularity}. 

In Section \ref{Sec:Universal-Noetherian} we establish the universal counterparts 
(Theorem \ref{thm:abstract2.3} and \ref{thm: thm universal}) of Theorems \ref{thm:main1} and \ref{Theo.M-sigma_k}, we prove them and using these universal results, via the evaluation, we prove the Theorems \ref{thm:main1} and \ref{Theo.M-sigma_k} themselves.

In Section \ref{sect.Diff.Op.Hol} we study Theorem \ref{Theo.M-sigma_k} from another point of view, using differential operators and establish an explicit isomorphism between the \emph{universal holonomy group} $\mathfrak{g}$ and the group of lower triangular Toeplitz matrices $\mathcal{T}$ . It gives a matricial interpretation for the triangularity index. 

In Section \ref{Sec:Examples}, we compute the universal Noetherianity index $n_{H,\gamma}$ for some examples. 

In Section \ref{Sec:Lie-alg}, we show that for the principal diagonal the free differential algebra $\A_1$ can be replaced by a much smaller free Lie algebra $\mathfrak{a}\subset\A_1$. On the first diagonal, we also show the optimality of the universal Noetherianity index  for perturbations by holomorphic form $\eta$.
Finally, in Section \ref{Sec:Prospective} we give some prospective and open problems.

\section{Melnikov functions on commutators}\label{sect.Mel.on.Comm}

In this Section we develop some techniques to prove Theorem \ref{Theo.M-sigma_k}. This theorem gives a structure theorem about the form of Melnikov functions along a loop $\sigma_k\in L_k$ in function of the position of the loop $\sigma_k$, with respect to the filtration given by the lower central sequence $L_1\supset L_2\supset\ldots$. It improves the result of Fran\c coise-Pelletier \cite{FP} about the form of the Melnikov function, if $k>1$.

We give two proofs of Theorem~\ref{Theo.M-sigma_k}. The first one, given in Section~\ref{Sec:Universal-Noetherian}, deals with the algebra of series compositions directly. We also introduce the notion of the \emph{triangularity index}. The second proof is given in Section~\ref{sect.Diff.Op.Hol} and studies the linear differential operators given by precomposition with holonomy maps. Both are ultimately based on the Taylor formula. While the first one is shorter, we think that the second proof explains better the phenomena involved, and the notion of the triangularity index  in particular.\\


Let $\{\delta_i(t)\}_{i=1,\dots,m}$ be a basis of vanishing cycles of the first homology group of a regular fiber $\{H=t\}$. Consider the first return map, i.e. the holonomy, with respect to the foliation $\mathcal{F}_\epsilon$, along the loop $\delta_i$, for any $i$.
For any $\delta\in\pi_1(\{H=t\},p_0)$, we consider its holonomy map 
\begin{equation}\label{eq:basis holonomy2}
    P^{\delta}_\epsilon(t)=t+\epsilon M^{\delta}_1(t)+\epsilon^2M_2^{\delta}(t)+\cdots.
\end{equation}
In particular, for the generators $\delta_i$ of $\pi_1(\{H=t\},p_0)$, we have
\begin{equation}\label{eq:basis holonomy}
    P^{\delta_i}_\epsilon(t)=t+\epsilon M^{\delta_i}_1(t)+\epsilon^2M_2^{\delta_i}(t)+\cdots.
\end{equation}

Recall that
\begin{equation}\label{eq:anti homo}
    P^{\alpha\beta}_\epsilon(t)=P^{\beta}_\epsilon\circ P^{\alpha}_\epsilon(t),
\end{equation}
so the holonomy map is an anti-representation.

\subsection{Holomorphic holonomy}\label{sec:proof1}
Let $\Hol$ be the differential algebra of germs of holomorphic functions $\phi:(\C,0)\rightarrow \C$, and let  $\Hol[[\epsilon]]$ be the differential   algebra, with respect to $\frac{\partial}{\partial t}$, of formal series $\sum_{j\ge 0} \epsilon^j\phi_j$, $\phi_j\in\Hol$.  Let $\mathcal{B}=\id+\epsilon \Hol[[\epsilon]]\subset \Hol[[\epsilon]]$.
    
     For $g\in  \Hol[[\epsilon]]$, the  powers $(\ep g)^n$ are well defined. Thus, for $f\in\Hol$,  we can define its composition with $\id+\ep g\in\mathcal{B}$ via the Taylor expansion:
    \begin{equation}\label{eq:Taylor in B}
        f(\id+\epsilon g)=f + \epsilon f'g+\frac{\epsilon^2}{2} f''g^2+\cdots\in  \Hol[[\epsilon]].
    \end{equation}
    For any $F=\id+\sum \ep^j f_j, G=\id+\ep g\in\mathcal{B}$,   the composition $F\circ G$ is therefore given as 
    \begin{equation}\label{eq:Taylor in B 2}
        (\id+\sum \ep^j f_j)(\id+\ep g)= \id+\ep g +\sum_{j>0}\ep^j f_j(\id+\ep g)\in \mathcal{B}.
    \end{equation}
\begin{lemma}
    The set $\mathcal{B}$ is a group under composition.   
\end{lemma}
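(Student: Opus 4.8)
The plan is to verify the three group axioms for $\mathcal{B}=\id+\epsilon\Hol[[\epsilon]]$ under the composition operation defined by \eqref{eq:Taylor in B}--\eqref{eq:Taylor in B 2}. The identity element is $\id\in\mathcal{B}$, and closure is essentially built into the formula \eqref{eq:Taylor in B 2}: for $F=\id+\sum_{j>0}\ep^j f_j$ and $G=\id+\ep g$, each term $f_j(\id+\ep g)=\sum_k \frac{\ep^k}{k!}f_j^{(k)}g^k$ lies in $\Hol[[\epsilon]]$, so $F\circ G=\id+\ep g+\sum_{j>0}\ep^j f_j(\id+\ep g)$ is again of the form $\id+\ep(\cdots)$, hence in $\mathcal{B}$. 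One should check that the coefficient of each fixed power $\ep^N$ in $F\circ G$ is a finite sum of elements of $\Hol$, so the series is well-defined: the contribution of $f_j(\id+\ep g)$ to $\ep^N$ comes only from $j\le N$ and from $k\le N-j$, and $g^k$ contributes to $\ep^N$ only for $k\le N$ since $g\in\Hol[[\epsilon]]$ need not be divisible by $\ep$ — here one uses $g=\sum_{l\ge0}\ep^l g_l$ so $(\ep g)^k$ starts at $\ep^k$, which caps the relevant $k$. I would spell this finiteness out as the one genuinely substantive point.

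Next I would establish associativity, $(F\circ G)\circ K = F\circ(G\circ K)$ for $F,G,K\in\mathcal{B}$. The cleanest route is to embed everything into honest substitution of formal power series in two variables: write $F=\id+\ep f(t,\ep)$, $G=\id+\ep g(t,\ep)$, and observe that \eqref{eq:Taylor in B} is exactly the substitution $t\mapsto t+\ep g(t,\ep)$ applied coefficientwise, i.e. $F\circ G$ is the formal composition of the $t$-depending-on-$\ep$ maps $t\mapsto F(t)$ and $t\mapsto G(t)$ in the ring $\Hol[[\epsilon]]$, where $F,G$ are regarded as elements $t+\ep f,\ t+\ep g$. Associativity of composition of formal substitutions is standard once all the substitutions are well-defined formal power series operations; the point to check is that the intermediate expressions remain in $\mathcal{B}$ (which follows from closure, already proved) so that all compositions are legitimate. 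Concretely, both $(F\circ G)\circ K$ and $F\circ(G\circ K)$ equal the single expression obtained by substituting $t\mapsto K(t)$, then $t\mapsto G(t)$, then $t\mapsto F(t)$, and this can be checked by comparing Taylor coefficients, using that $\tfrac{\dr^n}{\dr t^n}(f(H(t)))$ for $H\in\mathcal{B}$ is given by the Fa\`a di Bruno / chain rule which is itself associative.

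Finally, the existence of inverses: given $G=\id+\ep g\in\mathcal{B}$ I would construct $G^{-1}=\id+\ep h$ by solving $G\circ G^{-1}=\id$ recursively for the coefficients of $h=\sum_{l\ge0}\ep^l h_l$. Plugging into \eqref{eq:Taylor in B 2}, the coefficient of $\ep^{N}$ in $G\circ(\id+\ep h)$ is $h_{N-1}$ plus a polynomial expression (involving derivatives) in $g_0,\dots$ and $h_0,\dots,h_{N-2}$ only; setting it to zero determines $h_{N-1}\in\Hol$ uniquely and inductively, since $\Hol$ is closed under the required operations (sums, products, differentiation). Then one checks $G^{-1}\circ G=\id$ as well — this follows formally, e.g. from associativity together with a left inverse also existing by the symmetric recursion, or by the standard argument that in a monoid a two-sided-constructible inverse suffices. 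I expect the main obstacle to be purely bookkeeping: making the well-definedness of the infinite sums in \eqref{eq:Taylor in B}--\eqref{eq:Taylor in B 2} rigorous (the $\ep$-adic finiteness of each coefficient) and confirming associativity without getting lost in Fa\`a di Bruno combinatorics; the conceptual content is just that $\mathcal{B}$ is the group of $\ep$-deformed formal changes of coordinate $t\mapsto t+O(\ep)$, which is a group for the same reason formal diffeomorphisms are.
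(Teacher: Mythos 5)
The paper states this lemma without proof, treating it as routine, so there is nothing to compare against; your verification is correct and is the standard argument one would supply. The three points you isolate are exactly the right ones: well-definedness of \eqref{eq:Taylor in B}--\eqref{eq:Taylor in B 2} via the $\epsilon$-adic finiteness of each coefficient (since $(\epsilon g)^k$ starts at order $\epsilon^k$), associativity by coefficientwise comparison of the formal substitutions, and inverses by the recursion in which the coefficient of $\epsilon^N$ in $G\circ(\id+\epsilon h)$ equals $h_{N-1}$ plus a differential-polynomial expression in $g_0,\dots,g_{N-1}$ and $h_0,\dots,h_{N-2}$, with the left/right inverse identification following from associativity.
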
    
Using \eqref{eq:basis holonomy}, we define the holonomy map  
\begin{equation}\label{eq:hol eta}
    \pe:\pi_1\left(H^{-1}(t_0)\right)\rightarrow \mathcal{B}\subset Hol[[\epsilon]], \quad \pe(\delta)=P_\epsilon^{\delta}(t).
\end{equation} This is an anti-homomorhism of groups by \eqref{eq:anti homo}, so the map $\pe$ is completely defined by $\pe(\delta_i)$.

    For a $\delta\in \pi_1\left(H^{-1}(t_0)\right)$, write  
    \begin{equation*}
        \pe(\delta)=\id+\sum \ep^j M_j^{\delta}.
    \end{equation*}

\subsection{Universal holonomy}\label{subsec: universal holonomy}
Here we first define a \emph{universal holonomy},  a map $\scp$ from $\pi_1(\{H^{-1}(t_0)\}, p_0)$ to a suitable group $\mathcal{G}$ defined below. This lifting  depends on $\pi_1(\{H^{-1}(t_0)\}, p_0)$ only, and not on the perturbation $\eta$. Any holonomy $\pe$ corresponding to a perturbation $\eta$ equals the composition of $\scp$ with an evaluation map $\ev_\eta$.

The main part of this section is a proof of Theorem~\ref{thm: thm universal} , which is an analogue of Theorem~\ref{Theo.M-sigma_k} for the universal holonomy. This implies Theorem~\ref{Theo.M-sigma_k}, for any $\eta$, by the universality property (see Lemma \ref{lem:universality}).
\medskip

Define the free differential algebra $\mathcal{A}$ with a unit generated by $m_{j}(\delta_i)$, $1\le i\le m$, $j\ge 1$, and 
denote by $\A_r\subset\A $ the free differential algebra generated by $m_{j}(\delta_i)$, $1\le i\le m$, $1\le j\le r$. Let $\mathcal{A}[[\epsilon]]$ be the differential  algebra of formal series $\sum_{j\ge 0} \epsilon^j\phi_j$, $\phi_j\in\mathcal{A}$.
Define $\mathfrak{m}\subset\mathcal{A}$ to be the maximal differential ideal of $\mathcal{A}$. Let $\mathfrak{g}=\bigoplus_{j>0}\ep^j\mathfrak{m}\subset\mathcal{A}[[\epsilon]]$.

We define the composition on the set $\mathcal{G}=\{\id\}\oplus\mathfrak{g}$ of formal series, using Taylor series by:
\begin{equation*}
    \ep^j f_j\circ(\id+g)=\ep^j\left(f_j+\sum_i\frac{f_j^{(i)}}{i!}g^i\right), \quad t\circ(\id+g)=\id+g
\end{equation*}
and extend by linearity. 
For any $f,g\in\mathfrak{g}$, define the composition as 
\begin{equation*}
    f\circ(\id+g)=(\id+f)\circ(\id+g)-(\id+g).
\end{equation*}
\begin{lemma}
    $\mathcal{G}$ is a group under the composition defined above.
\end{lemma}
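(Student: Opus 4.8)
The plan is to verify directly that the composition law makes $\mathcal{G}=\{\id\}\oplus\mathfrak{g}$ into a group, mirroring the proof for $\mathcal{B}$. Since $\mathfrak{m}$ is a differential ideal, it is closed under taking derivatives, and since $\mathfrak{g}=\bigoplus_{j>0}\epsilon^j\mathfrak{m}$ consists of series with no constant term and coefficients in $\mathfrak{m}$, any power $g^i$ of an element $g\in\mathfrak{g}$ again lies in $\mathfrak{g}$ (indeed $g^i\in\bigoplus_{j\ge i}\epsilon^j\mathfrak{m}$, using that $\mathfrak{m}$ is an ideal so $\mathfrak{m}^i\subset\mathfrak{m}$); moreover the sum $\sum_i \tfrac{f_j^{(i)}}{i!}g^i$ is $\epsilon$-adically finite in each $\epsilon$-degree, so the composition is well defined and lands in $\mathcal{G}$. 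First I would check closure: for $f,g\in\mathfrak{g}$, the formula $(\id+f)\circ(\id+g)=\id+g+\sum_{j>0}\epsilon^j\big(f_j+\sum_{i\ge1}\tfrac{f_j^{(i)}}{i!}g^i\big)$ shows the composite differs from $\id$ by an element of $\mathfrak{g}$.

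Next I would establish associativity. The clean way is to observe that $\mathcal{A}[[\epsilon]]$ acts on itself by ``substitution'': any $F=\id+f$ with $f\in\mathfrak{g}$ induces an algebra endomorphism $\Phi_F$ of $\mathcal{A}[[\epsilon]]$ determined by $\Phi_F(t)=\id+f$ and $\Phi_F(a)=a$ for $a\in\mathcal{A}$ (constants), extended continuously in the $\epsilon$-adic topology and compatibly with $\tfrac{\partial}{\partial t}$ via the chain rule $\tfrac{\partial}{\partial t}\Phi_F(\phi)=\Phi_F(\phi')\cdot(1+f')$ — but in fact what we need is just that $\Phi_F$ is a well-defined continuous ring homomorphism, which follows because $\Phi_F(t)-t=f$ is topologically nilpotent. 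Then $\Phi_F\circ\Phi_G=\Phi_{F\circ G}$ by uniqueness (both send $t$ to $\Phi_G(F)=F\circ G$ and agree on $\mathcal{A}$), and associativity of $\circ$ is just associativity of composition of these endomorphisms evaluated at $t$. The identity element is clearly $\id$, since $\id\circ(\id+g)=\id+g$ and $(\id+f)\circ\id=\id+f$ from the formulas.

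The remaining and main point is the existence of inverses. Given $F=\id+f$ with $f=\sum_{j\ge1}\epsilon^j f_j$, $f_j\in\mathfrak{m}$, I would construct $G=\id+g$, $g=\sum_{j\ge1}\epsilon^j g_j$, with $F\circ G=\id$ by solving the equation $g+\sum_{j\ge1}\epsilon^j\big(f_j+\sum_{i\ge1}\tfrac{f_j^{(i)}}{i!}g^i\big)=0$ recursively in the $\epsilon$-degree: the $\epsilon^1$ coefficient gives $g_1=-f_1\in\mathfrak{m}$, and the $\epsilon^N$ coefficient expresses $g_N$ as $-f_N$ plus a polynomial (with $\tfrac{1}{i!}$ coefficients and derivatives) in $f_1,\dots,f_{N}$ and $g_1,\dots,g_{N-1}$, all of which lie in the differential ideal $\mathfrak{m}$; since $\mathfrak{m}$ is closed under derivation and multiplication, $g_N\in\mathfrak{m}$, so $G\in\mathcal{G}$. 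This yields a right inverse; a left inverse is obtained symmetrically (or deduced from associativity plus the right-inverse property by the standard monoid argument), and the two coincide. The only subtlety to watch is convergence in the $\epsilon$-adic topology — that each coefficient equation involves only finitely many terms — which holds because $g$ has no $\epsilon^0$ term, so $g^i$ contributes to $\epsilon^N$ only for $i\le N$. Writing $\mathfrak{g}$ as a topologically complete $\C$-module with $\mathfrak{m}$-coefficients, this recursion converges and completes the proof that $\mathcal{G}$ is a group.
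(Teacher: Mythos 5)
The paper states this lemma without proof, so your write-up is judged on its own. Your closure computation, the identity element, and the recursive construction of inverses are all correct: each $\epsilon^N$-coefficient receives only finitely many contributions because $g$ has no $\epsilon^0$-term, and the coefficients stay in $\mathfrak{m}$ because $\mathfrak{m}$ is a differential ideal, hence closed under derivation and multiplication.

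The gap is in your associativity argument. The substitution operator attached to $G=\id+g$ that actually implements the paper's composition is $\Phi_G(\phi)=\phi\circ(\id+g)=\sum_{i\ge0}\tfrac{\phi^{(i)}}{i!}g^i$; it fixes only the constants $\C$, not $\mathcal{A}$, precisely because elements of $\mathcal{A}$ are treated as functions of $t$ through the formal derivation (this is the same Taylor rule you used in your closure step). Your stipulation ``$\Phi_F(t)=\id+f$ and $\Phi_F(a)=a$ for $a\in\mathcal{A}$'' (apart from the minor point that $t\notin\mathcal{A}[[\epsilon]]$ and must be adjoined) defines a different operator: composing two such endomorphisms sends $t\mapsto t+f+g$, i.e.\ it implements formal addition of the nonlinear parts, and in particular your key identity $\Phi_G(F)=F\circ G$ fails for that $\Phi_G$, so the ``by uniqueness'' conclusion $\Phi_F\circ\Phi_G=\Phi_{F\circ G}$ says nothing about the composition law under study. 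Even with the correct $\Phi_G$, uniqueness on ``$t$ and $\mathcal{A}$'' is not available for free: $\Phi_G$ does not commute with $\partial/\partial t$ (only the twisted chain rule $(\Phi_G\phi)'=\Phi_G(\phi')\,(1+g')$ holds), so agreement of $\Phi_H\circ\Phi_G$ with $\Phi_{G\circ H}$ must be checked on all the polynomial generators $m_j(\delta_i)^{(k)}$ of $\mathcal{A}$, which is essentially the associativity computation itself. A clean repair: for each fixed $N$, the $\epsilon^N$-coefficient of $(F\circ G)\circ H-F\circ(G\circ H)$ is a universal polynomial with rational coefficients in finitely many of the symbols $m_j(\delta_i)^{(k)}$; since $\mathcal{A}$ is free, it suffices to verify its vanishing after specializing those symbols, and specializing the generators to polynomials in $t$ makes every Taylor series terminate and turns $\circ$ into literal substitution of polynomials, which is associative. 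Alternatively, prove directly the shift-operator identity $\Phi_H\circ\Phi_G=\Phi_{G\circ H}$ by a Leibniz-type manipulation; either way, this step needs an argument that your proposal does not supply.
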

\begin{lemma}\label{lem:Gk small}
    Denote by $\mathcal{G}_k$ the $k$-th term of the lower central sequence of $\mathcal{G}=\mathcal{G}_1$. Then $\mathcal{G}_k\subset \{\id\}\oplus\bigoplus_{j\ge k}\ep^j\mathfrak{m}$.
    \end{lemma}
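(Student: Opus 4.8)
The plan is to reformulate the statement in terms of $\epsilon$-valuations and prove it by induction on $k$. For $F\in\mathcal G$ let $v(F)$ denote the $\epsilon$-valuation of $F-\id$, i.e.\ the smallest $j$ with a nonzero $\epsilon^j$-coefficient (and $v(\id)=\infty$). Since $\mathcal G_k\subseteq\mathcal G=\{\id\}\oplus\bigoplus_{j\ge1}\epsilon^j\mathfrak m$, every element of $\mathcal G_k$ automatically has all its coefficients in $\mathfrak m$; hence it suffices to show that $v(F)\ge k$ for every $F\in\mathcal G_k$, which then gives exactly $\mathcal G_k\subseteq\{\id\}\oplus\bigoplus_{j\ge k}\epsilon^j\mathfrak m$. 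So the only real content is a valuation bound.

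The heart of the argument is the commutator estimate $v\bigl([F,G]\bigr)\ge v(F)+v(G)$ for $F,G\in\mathcal G$, with $[F,G]=F\circ G\circ F^{-1}\circ G^{-1}$ (the subgroup generated by commutators is convention-independent, so this choice is harmless). First I would record the elementary \emph{substitution estimate}: if $g\in\mathfrak g$ has $v(g)=b\ge 1$, then for any $F=\id+f\in\mathcal G$, writing $f=\sum_j\epsilon^j f_j$, the defining Taylor formula gives
\begin{equation*}
    F\circ G=\id+g+f+\rho_{F,G},\qquad \rho_{F,G}=\sum_j\epsilon^j\sum_{i\ge1}\frac{f_j^{(i)}}{i!}\,g^i,
\end{equation*}
and each monomial of $\rho_{F,G}$ carries $\epsilon^j$ with $j\ge v(F)$ times $g^i$ with $i\ge1$, hence has $\epsilon$-degree $\ge v(F)+i\,b\ge v(F)+b$; thus $v(\rho_{F,G})\ge v(F)+v(G)$. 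Here we use that $\mathfrak m$ is a \emph{differential} ideal, so all the $f_j^{(i)}$ stay in $\mathfrak m$. Symmetrically $G\circ F=\id+f+g+\rho_{G,F}$ with $v(\rho_{G,F})\ge v(F)+v(G)$. Now set $A=F\circ G$, $B=G\circ F$, so that $A=B+r$ with $r=\rho_{F,G}-\rho_{G,F}$ and $v(r)\ge v(F)+v(G)$; since composition is substitution it is additive in the left argument, so $[F,G]=A\circ B^{-1}=(B+r)\circ B^{-1}=\id+r\circ B^{-1}$. Finally $B^{-1}=\id+s$ with $s\in\mathfrak g$, $v(s)\ge1$, and substituting $\id+s$ does not lower the valuation, so $v(r\circ B^{-1})\ge v(r)\ge v(F)+v(G)$, giving the claim.

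With this in hand the induction is routine. For $k=1$, $\mathcal G_1=\mathcal G$ and every element has valuation $\ge1$. Assume $v\ge k$ on $\mathcal G_k$. The set $H_{k+1}:=\{F\in\mathcal G: v(F)\ge k+1\}=\{\id\}\oplus\bigoplus_{j\ge k+1}\epsilon^j\mathfrak m$ is a subgroup of $\mathcal G$: closure under composition follows from $F\circ G=\id+f+g+\rho_{F,G}$ with $v(\rho_{F,G})\ge2(k+1)$, and closure under inversion from solving $F\circ F^{-1}=\id$ for $F^{-1}=\id+s$, which forces $v(s)=v(f)\ge k+1$ by a one-step bootstrap on valuations. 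Since $\mathcal G_{k+1}=[\mathcal G,\mathcal G_k]$ is generated by commutators $[F,G]$ with $v(F)\ge1$ and $v(G)\ge k$, each such commutator has valuation $\ge k+1$ by the estimate above and hence lies in $H_{k+1}$; as $H_{k+1}$ is a subgroup, $\mathcal G_{k+1}\subseteq H_{k+1}$, completing the induction.

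I expect the only delicate point to be the bookkeeping forced by the fact that the group law on $\mathcal G$ is composition (substitution into a formal $\epsilon$-series) rather than multiplication in $\mathcal A[[\epsilon]]$. Everything, however, reduces to the single observation already implicit in the definition of $\mathcal G$: substituting $\id+g$ with $v(g)=b\ge1$ into a series of $\epsilon$-valuation $a$ changes it only by a term of $\epsilon$-valuation $\ge a+b$, together with the fact that $\mathfrak m$ is a differential ideal so that the derivatives appearing in the Taylor expansion remain in $\mathfrak m$. No further input (in particular no normality of the $H_k$) is needed.
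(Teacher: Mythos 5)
Your proof is correct and takes essentially the same route as the paper: the paper's own argument is just the observation, compressed into the single displayed formula \eqref{eq:vector}, that forming commutators adds $\epsilon$-valuations, which you establish carefully via the Taylor/substitution estimate and then propagate by induction together with the remark that elements of valuation $\ge k$ form a subgroup. Your bound $v([F,G])\ge v(F)+v(G)$ is precisely the valuation-only shadow of the paper's later triangularity-index estimate (Proposition~\ref{prop:proposition on commutators}), so your write-up simply supplies the details the paper leaves implicit rather than departing from its method.
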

    \begin{proof}
        $\mathcal{G}_k$ is generated by commutators of length $\ge k$ of the basic elements. Thus, the claim follows from 
        \begin{equation}\label{eq:vector}
            [\id+\ep f, \id+\ep^ig]=\id+\ep^{i+1}(f'g-g'f)+\dots.
        \end{equation}
    \end{proof}

Recall that, as $H^{-1}(t_0)$ is a smooth affine curve, the group $\pi_1(\{H^{-1}(t_0)\}, p_0)$ is a free group $\mathbb{F}_m$, generated by $\delta_1,\ldots,\delta_m$. Define a homomorphism, further called \emph{universal holonomy}, by  
\begin{equation}\label{eq: universal mono def}
    \scp: \mathbb{F}_m\cong\pi_1(\{H^{-1}(t_0)\}\,, p_0)\to \mathcal{G},\quad \scp\left(\delta_i\right)=t+\sum_{j>0}\ep^jm_{j}(\delta_i)\in \mathcal{G}. 
\end{equation}
We denote by $m_j(\delta)$ the components of the map $\scp$:
\begin{equation*}
    \scp(\delta)=t+\sum_{j>0}\ep^jm_j(\delta).
\end{equation*}

For a given $\eta$, define the evaluation homomorphism of differential algebras $\ev_\eta: \mathcal{A}\to \Hol$ by $\ev_\eta(m_{j}(\delta_i))=M_j^{\delta_i}$, which we extend to $\ev_\eta: \mathcal{A}[[\epsilon]]\to \Hol[[\epsilon]]$. We denote by 
\begin{equation}\label{eta evaluation}
    \A_r^\eta=\ev_\eta(\A_r)\subset \Hol\,.
\end{equation}
 
\begin{lemma}\label{lem:universality}
    The map $\scp$ has the following universal property:  $\pe=\ev_\eta\circ\scp$ for any $\eta$.
\end{lemma}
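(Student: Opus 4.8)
The plan is to verify the identity $\pe = \ev_\eta \circ \scp$ by checking that both sides agree on the free generators $\delta_i$ of $\mathbb{F}_m$ and that both sides are (anti-)homomorphisms into the relevant group, so that agreement on generators forces agreement everywhere. First I would recall that, by definition \eqref{eq: universal mono def}, $\scp(\delta_i) = t + \sum_{j>0}\epsilon^j m_j(\delta_i)$, and that $\ev_\eta$ is the differential-algebra homomorphism $\mathcal{A}\to\Hol$ determined by $\ev_\eta(m_j(\delta_i)) = M_j^{\delta_i}$, extended coefficientwise to $\mathcal{A}[[\epsilon]]\to\Hol[[\epsilon]]$. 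Hence $\ev_\eta(\scp(\delta_i)) = t + \sum_{j>0}\epsilon^j M_j^{\delta_i}$, which is exactly $P_\epsilon^{\delta_i}(t) = \pe(\delta_i)$ by \eqref{eq:hol eta} and \eqref{eq:basis holonomy}. So the two maps agree on generators.

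Next I would argue that $\ev_\eta\circ\scp$ is a homomorphism $\mathbb{F}_m\to\mathcal{B}$ with the same composition-convention as $\pe$. The point is that $\ev_\eta$ intertwines the Taylor-series composition on $\mathcal{G}=\{\id\}\oplus\mathfrak{g}$ with the Taylor-series composition on $\mathcal{B}=\id+\epsilon\Hol[[\epsilon]]$: this is because $\ev_\eta$ is a homomorphism of differential algebras, so it commutes with the formal derivative $\partial_t$ and with multiplication, and the composition law on both $\mathcal{G}$ and $\mathcal{B}$ is written purely in terms of these operations via the formulas \eqref{eq:Taylor in B}, \eqref{eq:Taylor in B 2} and the displayed formulas defining the composition on $\mathcal{G}$. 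Concretely, for $f_j\in\mathcal{A}$ and $g\in\mathfrak{g}$ one has $\ev_\eta\!\left(\epsilon^j f_j + \epsilon^j\sum_i \tfrac{f_j^{(i)}}{i!}g^i\right) = \epsilon^j\,\ev_\eta(f_j) + \epsilon^j\sum_i \tfrac{(\ev_\eta f_j)^{(i)}}{i!}(\ev_\eta g)^i$, which says precisely that $\ev_\eta(u\circ v) = \ev_\eta(u)\circ \ev_\eta(v)$. Therefore $\ev_\eta\circ\scp$ is an anti-homomorphism $\mathbb{F}_m\to\mathcal{B}$ (inheriting the anti-homomorphism property of $\scp$), just like $\pe$ by \eqref{eq:anti homo}.

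Finally, since $\mathbb{F}_m$ is free on $\delta_1,\dots,\delta_m$, an (anti-)homomorphism out of it is determined by the images of the generators; as $\pe$ and $\ev_\eta\circ\scp$ are both such maps and agree on each $\delta_i$, they coincide on all of $\pi_1(\{H^{-1}(t_0)\},p_0)$. In particular, for every $\delta$ one gets $M_j^\delta = \ev_\eta(m_j(\delta))$ for all $j$, which is the content of the lemma. I expect the only genuinely delicate point to be the compatibility of $\ev_\eta$ with the composition laws --- i.e. checking that $\ev_\eta$ really is a morphism of the two "group of series under Taylor composition" structures --- since one must make sure that the infinite Taylor sums in \eqref{eq:Taylor in B} converge $\epsilon$-adically and are mapped term-by-term; but this follows from the fact that all elements of $\mathfrak{g}$ (and of $\epsilon\Hol[[\epsilon]]$) have positive $\epsilon$-order, so each coefficient of $\epsilon^n$ in $u\circ v$ is a finite algebraic-differential expression in finitely many coefficients of $u$ and $v$, to which $\ev_\eta$ applies directly.
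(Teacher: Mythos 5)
Your proposal is correct and matches the argument the paper leaves implicit (the lemma is stated without proof as an immediate consequence of the definitions): both $\pe$ and $\ev_\eta\circ\scp$ agree on the free generators $\delta_i$ by the very definitions \eqref{eq:basis holonomy}, \eqref{eq: universal mono def} and of $\ev_\eta$, and since $\ev_\eta$ is a differential-algebra homomorphism applied coefficientwise it preserves the Taylor-composition laws, so freeness of $\mathbb{F}_m$ forces the two (anti-)homomorphisms to coincide. Your remark that each $\epsilon^n$-coefficient of a composition is a finite differential-polynomial expression, so $\ev_\eta$ applies term by term, is exactly the point that makes this routine, and your handling of the homomorphism/anti-homomorphism convention is consistent with \eqref{eq:anti homo}.
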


\medskip

For a differential polynomial $\phi\neq0$ in $\mathfrak{m}$, we define its \emph{universal length} $\val{\phi}$ as the smallest $r$ such that $\phi$ is a differential polynomial in $m_{j}(\delta_i)$, with $1\le j\le r$ only,
\begin{equation*}
    \val{\phi}=\min\{r|\phi\in \A_r\}.
\end{equation*}
\begin{remark}\label{rem:positive length}
    Note that $\val{\phi}\ge 1$, for $\phi\neq0$, as $\mathfrak{m}$ doesn't contain non-zero constants. 
\end{remark}

For a differential polynomial
$f=\sum_{j>0}\ep^jf_j$, we define its \emph{triangularity index}  
\begin{equation*}
    \de{f}=\inf_j\{j-\val{f_j}+1\}.
\end{equation*}
In our application we only consider $f$ such that $\de{f}\ge 1$. We put $\de{\id+f}=\de{f}$ and define $\val{0}=-\infty$.
\begin{remark}
In Section \ref{sect.Diff.Op.Hol}, we study holonomies 
appearing in \eqref{eq:deformation} and differential operators associated with these holonomies. They motivate the introduction of the universal holonomy group  $\mathfrak{g}$ in this section. 
We give matrix operators associated with these differential operators and show that they are lower triangular Toeplitz matrices $\mathcal{T}$. 
In Subsection \ref{sub7.3}, \eqref{eq:iso}, we establish an isomorphism  between the universal holonomy group $\mathfrak{g}$ and the group of Toeplitz matrices $\mathcal{T}$.
We show moreover, in Proposition \ref{prop:bridge}, that the isomorphism transports the notion of \emph{triangularity} $\tau$
from the universal holonomy group to the corresponding group of Toeplitz matrices. 
This explains the terminology of triangularity, comming from the number of diagonals vanishing in the matrix representation. 
\end{remark}

\section{Universal structure and Noetherianity theorems}\label{Sec:Universal-Noetherian}

In this section we develop two universal theorems Theorem \ref{thm: thm universal} and Theorem \ref{thm:abstract2.3} corresponding to Theorem \ref{Theo.M-sigma_k} and Theorem \ref{thm:main1}, respectively. 
The first one is an abstract structure theorem for Melnikov functions in the differential algebra $\mathcal{A}$ and the second one gives a universal Noetherianity index in the same algebra $\mathcal{A}$.

\begin{customthm}{B'}\label{thm: thm universal}
   Let $\delta\in L_\ell$, where $L_\ell$ is the $\ell$-the element of the lower central series of $\mathbb{F}_m\cong\pi_1(\{H^{-1}(t_0)\}, p_0)$, and let $\scp(\delta)=t+\sum_{j>0}\ep^jm_j(\delta)$. Then $\val{m_j(\delta)}\le j-\ell+1$, so $m_j(\delta)\in \A_{j-\ell+1}$.
\end{customthm}

\medskip
In order to prove Theorem~\ref{thm: thm universal}, we establish some basic properties of the length and the triangularity index.
\begin{lemma}\label{lem:de for product of monomials}
     $\de{\epsilon^{i+j}f_ig_j}\ge\de{\epsilon^{i}f_i}+\de{\epsilon^{j}g_j}$.
\end{lemma}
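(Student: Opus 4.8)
The plan is to translate the inequality into a statement purely about the universal length $\lambda$. Since $\epsilon^{i}f_i$, $\epsilon^{j}g_j$ and $\epsilon^{i+j}f_ig_j$ are monomials in $\epsilon$ (each having a single nonzero coefficient), the definition of the triangularity index gives directly $\de{\epsilon^{i}f_i}=i-\val{f_i}+1$, $\de{\epsilon^{j}g_j}=j-\val{g_j}+1$ and $\de{\epsilon^{i+j}f_ig_j}=(i+j)-\val{f_ig_j}+1$. Plugging these in, the asserted inequality is seen to be equivalent to
\[
\val{f_ig_j}\le \val{f_i}+\val{g_j}-1 ,
\]
so the whole lemma reduces to this sub-additivity bound for the length of a product.

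To prove that bound, I would first dispose of the degenerate case $f_i=0$ or $g_j=0$: then $f_ig_j=0$, and with the convention $\val{0}=-\infty$ the inequality (hence the original one, which then compares $+\infty$ with $+\infty$) holds vacuously. Assuming $f_i,g_j\ne0$, Remark~\ref{rem:positive length} gives $\val{f_i},\val{g_j}\ge1$. The key observation is the cruder bound $\val{f_ig_j}\le\max(\val{f_i},\val{g_j})$: writing $a=\val{f_i}$ and $b=\val{g_j}$, both $f_i$ and $g_j$ lie in the subalgebra $\A_{\max(a,b)}$, which is closed under multiplication and therefore also contains their product; hence $\val{f_ig_j}\le\max(a,b)$. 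Since $a,b\ge1$ we have $\max(a,b)\le a+b-1$, and the required inequality follows.

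There is essentially no obstacle here; the only point worth flagging is that the ``$-1$'' in the statement is exactly what the strict positivity $\val{\,\cdot\,}\ge1$ of nonzero elements of $\mathfrak{m}$ buys us, which in turn is why it matters that $\mathfrak{m}$ contains no nonzero constants. I would present this as the single non-formal input the argument relies on, everything else being bookkeeping with the definitions of $\lambda$ and $\tau$ and the $\pm\infty$ conventions.
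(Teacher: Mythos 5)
Your argument is correct and is essentially the paper's own proof: both reduce the claim to the identity $\de{\epsilon^{i}f_i}=i-\val{f_i}+1$ for monomials, bound $\val{f_ig_j}$ by $\max\{\val{f_i},\val{g_j}\}$, and then use $\val{f_i},\val{g_j}\ge 1$ (Remark~\ref{rem:positive length}) to absorb the ``$-1$''. The only cosmetic differences are that you phrase the max-bound as an inequality via closure of $\A_{\max}$ under products (the paper asserts equality, which is not needed) and that you explicitly handle the zero case, both harmless.
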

\begin{proof}
    \begin{align*}
      \de{\epsilon^{i+j}f_ig_j}=i+j-\val{f_ig_j}+1=i+j-\max\{\val{f_i},\val{g_j}\}+1\\
        =(i-\val{f_i}+1)+(j-\val{g_j}+1) +\min\{\val{f_i},\val{g_j}\}-1\\
        =\de{\epsilon^{i}f_i}+\de{\epsilon^{j}g_j} +\min\{\val{f_i},\val{g_j}\}-1\\
        \ge\de{\epsilon^{i}f_i}+\de{\epsilon^{j}g_j},
    \end{align*}
    as $\val{f_j},\val{g_j}\ge  1$, by Remark~\ref{rem:positive length}.
\end{proof}
\begin{lemma}\label{lem:de with add and mult}
    For $f,g\in\mathcal{B}$, we have
    \begin{enumerate}
        \item $\de{f+g}\ge \min\{\de{f},\de{g\}}$, 
        \item $\de{fg} \ge\de{f}+\de{g}$. In particular, $\de{g^n}\ge n\de{g}$.
    \end{enumerate}
\end{lemma}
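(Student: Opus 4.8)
The plan is to derive both inequalities from the monomial estimate of Lemma~\ref{lem:de for product of monomials} together with a single sub-additivity property of the universal length. Namely: since each $\A_r$ is a differential subalgebra of $\A$, for $a,b\in\A$ one has $a+b\in\A_{\max\{\val a,\val b\}}$, hence
\begin{equation*}
    \val{a+b}\le\max\{\val a,\val b\},
\end{equation*}
and this stays correct with the convention $\val 0=-\infty$, the case of a vanishing summand being trivial; by iteration, $\val{\sum_\alpha a_\alpha}\le\max_\alpha\val{a_\alpha}$ for any finite family. Throughout write $f=\sum_{k>0}\ep^kf_k$ and $g=\sum_{k>0}\ep^kg_k$, recall that for a monomial $\de{\ep^k\phi}=k-\val\phi+1$ and that $\de{f}=\inf_k\de{\ep^kf_k}$, and note that $x\mapsto k-x$ is order-reversing.

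For part (1): the $\ep^k$-coefficient of $f+g$ is $f_k+g_k$, so, applying the remark above coefficientwise,
\begin{align*}
    \de{\ep^k(f_k+g_k)}&=k-\val{f_k+g_k}+1\ge k-\max\{\val{f_k},\val{g_k}\}+1\\
    &=\min\bigl\{\de{\ep^kf_k},\,\de{\ep^kg_k}\bigr\}\ge\min\{\de f,\de g\}.
\end{align*}
Taking the infimum over $k$ gives $\de{f+g}\ge\min\{\de f,\de g\}$.

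For part (2): writing $h_k=\sum_{i+j=k}f_ig_j$ (sum over $i,j\ge1$) for the $\ep^k$-coefficient of $fg$, the iterated remark, then the identity $k-\val{f_ig_j}+1=\de{\ep^{i+j}f_ig_j}$, and then Lemma~\ref{lem:de for product of monomials} applied termwise give, for every $k$,
\begin{align*}
    \de{\ep^kh_k}&=k-\val{h_k}+1\ge\min_{i+j=k}\bigl(k-\val{f_ig_j}+1\bigr)\\
    &=\min_{i+j=k}\de{\ep^{i+j}f_ig_j}\ge\min_{i+j=k}\bigl(\de{\ep^if_i}+\de{\ep^jg_j}\bigr)\ge\de f+\de g,
\end{align*}
the last inequality using $\de{\ep^if_i}\ge\de f$ and $\de{\ep^jg_j}\ge\de g$, which are immediate from $\de{\cdot}$ being an infimum. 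Taking the infimum over $k$ yields $\de{fg}\ge\de f+\de g$, and $\de{g^n}\ge n\de g$ then follows by induction from $\de{g^n}=\de{g^{n-1}\cdot g}\ge\de{g^{n-1}}+\de g$.

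I do not expect a genuine obstacle: once Lemma~\ref{lem:de for product of monomials} is available the statement is purely formal. The only point requiring a little care is the bookkeeping around vanishing $\ep^k$-coefficients and the convention $\val 0=-\infty$ (which makes $\de{\ep^k\cdot 0}=+\infty$); a vanishing coefficient only inflates the relevant minima, so it never affects the inequalities above.
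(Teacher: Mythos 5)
Your proof is correct and follows essentially the same route as the paper, whose one-line proof likewise reduces both parts to coefficientwise (monomial) estimates and invokes Lemma~\ref{lem:de for product of monomials} for the product; you have simply made explicit the sub-additivity $\val{a+b}\le\max\{\val{a},\val{b}\}$ and the handling of vanishing coefficients.
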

\begin{proof}
Both assertions follow from considering the monomials, using Lemma~\ref{lem:de for product of monomials}, in the second case.
\end{proof}

\begin{lemma}\label{lem:composition triangularity index s} 
For $f,g\in\mathcal{B},$ 
    \begin{equation}
        \de{f\circ(\id+g)-f}\ge\de{f}+\de{g}.
    \end{equation}
\end{lemma}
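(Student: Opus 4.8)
The plan is to unwind the definition of the composition into its Taylor tail and then feed that into the additivity/multiplicativity estimates for $\de{\cdot}$ proved in Lemma~\ref{lem:de with add and mult}, together with one auxiliary observation: the $t$-derivative does not increase the universal length, hence does not decrease the triangularity index.

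\emph{Step 1: compute $f\circ(\id+g)-f$.} Write $f=\sum_{j>0}\ep^j f_j$ with $f_j\in\mathfrak{m}$. By the monomial rule $\ep^j f_j\circ(\id+g)=\ep^j\bigl(f_j+\sum_{i\ge1}\tfrac{f_j^{(i)}}{i!}g^i\bigr)$ and linearity,
\begin{equation*}
    f\circ(\id+g)-f=\sum_{i\ge1}\frac{1}{i!}\,f^{(i)}g^{i},\qquad f^{(i)}:=\sum_{j>0}\ep^j f_j^{(i)},
\end{equation*}
the series converging $\ep$-adically because $g\in\mathfrak{g}$ is divisible by $\ep$. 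Note each $f^{(i)}$ again has coefficients in $\mathfrak{m}$, since $\mathfrak{m}$ is a differential ideal.

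\emph{Step 2: differentiation does not lower $\de{\cdot}$.} I claim $\val{\phi'}\le\val{\phi}$ for every $0\ne\phi\in\mathfrak{m}$. Indeed, if $\phi\in\A_r$, i.e.\ $\phi$ is a differential polynomial in the $m_j(\delta_i)$ with $j\le r$, then $\phi'\in\A_r$ as well, because $\A_r$ is closed under $\partial_t$; hence $\val{\phi'}\le r$. Applying this coefficientwise and using $\de{f}=\inf_j\{j-\val{f_j}+1\}$ gives $\de{f'}\ge\de{f}$, and by iteration $\de{f^{(i)}}\ge\de{f}$ for all $i\ge0$. (Vanishing coefficients only make the bound easier, via $\val{0}=-\infty$.)

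\emph{Step 3: assemble.} Scalars in $\C^{\times}$ do not affect $\val{\cdot}$, hence not $\de{\cdot}$, so by Lemma~\ref{lem:de with add and mult}(1) applied to the sum in Step~1,
\begin{equation*}
    \de{f\circ(\id+g)-f}\ \ge\ \inf_{i\ge1}\de{f^{(i)}g^{i}}.
\end{equation*}
By Lemma~\ref{lem:de with add and mult}(2) and Step~2, for each $i\ge1$,
\begin{equation*}
    \de{f^{(i)}g^{i}}\ \ge\ \de{f^{(i)}}+i\,\de{g}\ \ge\ \de{f}+i\,\de{g}\ \ge\ \de{f}+\de{g},
\end{equation*}
where the last inequality uses $i\ge1$ and $\de{g}\ge1>0$ (we only consider $f,g$ of positive triangularity index). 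Taking the infimum over $i$ gives the claim.

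The only point requiring care is Step~2 — the stability of the length $\val{\cdot}$ under $\partial_t$ — and the bookkeeping in Step~1 confirming that the composition rule produces precisely the Taylor tail $\sum_{i\ge1}\tfrac1{i!}f^{(i)}g^{i}$ with the harmless factorial scalars; there is no genuine obstacle beyond these routine verifications.
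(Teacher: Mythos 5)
Your proof is correct and follows essentially the same route as the paper's: Taylor-expand $f\circ(\id+g)-f$, observe that $t$-differentiation does not increase the universal length $\val{\cdot}$ (the paper asserts $\val{f_i^{(j)}}=\val{f_i}$ where you prove the inequality, which is all that is needed), and then bound each term by Lemma~\ref{lem:de with add and mult} together with $\de{g}\ge 1$. The only difference is cosmetic — you group the Taylor tail by derivative order rather than by powers of $\ep$ — so nothing further is required.
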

\begin{proof}
    By Taylor decomposition,
    \begin{equation*}
        f\circ(\id+g)-f=\sum_{i>0} \ep^i\left(f_i\circ(\id+g)-f_i\right)=\sum_{i>0} \ep^i\sum_{j> 0}\frac{f_i^{(j)}}{j!}g^j,
    \end{equation*}
    so it is enough to prove that $\de{\ep^i f_i^{(j)} g^j}\ge \de{f}+\de{g}$. Clearly, $\val{ f_i^{(j)}}=\val{f_i}$, so 
    \begin{equation*}
\de{\ep^i f_i^{(j)} g^j}=\de{\ep^i f_i g^j}\ge \de{f}+j\de{g}\ge \de{f}+\de{g}.
\end{equation*}
\end{proof}

\begin{corollary}\label{cor:De with comp}\hfill
   \begin{enumerate}
       \item $\de{f\circ G}\ge\de{f}$
       \item $\de{F\circ G}\ge\min\{\de{F},\de{G}\}.$
   \end{enumerate} 
\end{corollary}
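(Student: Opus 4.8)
The final statement to prove is Corollary~\ref{cor:De with comp}, which asserts $\de{f\circ G}\ge\de{f}$ and $\de{F\circ G}\ge\min\{\de{F},\de{G}\}$ for $F,G\in\mathcal{B}$ (equivalently, $F = \id+f$, $G=\id+g$ with $f,g\in\mathfrak{g}$). Both parts should be nearly immediate consequences of Lemma~\ref{lem:composition triangularity index s} together with the additivity/superadditivity properties collected in Lemma~\ref{lem:de with add and mult}.

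\begin{proof}
    For part (1), write $G=\id+g$ with $g\in\mathfrak{g}$, so $\de{g}\ge 1$ by the standing convention (Remark~\ref{rem:positive length} gives $\val{\cdot}\ge1$, and we only consider $f$ with $\de{f}\ge1$). Decompose $f\circ G=\big(f\circ(\id+g)-f\big)+f$. By Lemma~\ref{lem:composition triangularity index s}, $\de{f\circ(\id+g)-f}\ge\de{f}+\de{g}\ge\de{f}$, since $\de{g}\ge0$ (in fact $\ge1$). Hence, by Lemma~\ref{lem:de with add and mult}(1),
    \begin{equation*}
        \de{f\circ G}\ge\min\{\de{f\circ(\id+g)-f},\de{f}\}\ge\de{f}.
    \end{equation*}

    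For part (2), write $F=\id+f$, $G=\id+g$ with $f,g\in\mathfrak{g}$. By the definition of composition on $\mathcal{G}$ (resp.\ on $\mathcal{B}$), we have $F\circ G=\id+\big(f\circ(\id+g)-f\big)+f+g$, so that the ``$g$-part'' of $F\circ G-\id$ is $\big(f\circ(\id+g)-f\big)+f+g$. Applying Lemma~\ref{lem:de with add and mult}(1) termwise,
    \begin{equation*}
        \de{F\circ G}\ge\min\big\{\de{f\circ(\id+g)-f},\,\de{f},\,\de{g}\big\}.
    \end{equation*}
    By Lemma~\ref{lem:composition triangularity index s}, $\de{f\circ(\id+g)-f}\ge\de{f}+\de{g}\ge\min\{\de{f},\de{g}\}$, again using $\de{f},\de{g}\ge1$. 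Therefore $\de{F\circ G}\ge\min\{\de{F},\de{G}\}$, as claimed.
\end{proof}

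The only subtlety, and the one point worth being careful about, is bookkeeping: making sure that the expression ``$f\circ(\id+g)-f+f+g$'' really is the series $F\circ G-\id$ under the composition law as \emph{defined} in Section~\ref{subsec: universal holonomy} (and its holomorphic analogue in Section~\ref{sec:proof1}), and that all intermediate terms lie in $\mathfrak{g}$ (resp.\ $\epsilon\,\Hol[[\epsilon]]$) so that their triangularity indices are defined and $\ge1$. This is immediate from \eqref{eq:Taylor in B 2} and the corresponding formula for $\mathcal{G}$, but it is the step one must not skip. Everything else is a direct application of the two preceding lemmas, so there is no real obstacle here; the corollary is essentially a packaging of Lemma~\ref{lem:composition triangularity index s}.
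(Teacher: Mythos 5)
Your proof is correct and follows essentially the same route as the paper: the same decompositions $f\circ G=(f\circ G-f)+f$ and $F\circ G=(f\circ G-f)+f+G$, combined with Lemma~\ref{lem:composition triangularity index s} and Lemma~\ref{lem:de with add and mult}. The extra bookkeeping you carry out (checking $\de{g}\ge1$ and that all terms lie in the right space) is exactly what the paper leaves implicit.
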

\begin{proof}
Write
$$f\circ G=(f\circ G-f)+f$$
or, respectively,
$$F\circ G= G+f\circ G =(f\circ G-f)+f+G$$ and use the previous Lemma and Lemma~\ref{lem:de with add and mult}.
\end{proof}

\begin{lemma}
    $\de{F^{-1}}=\de{F}$, $\de{F^{\circ-1}}=\de{F}$.
\end{lemma}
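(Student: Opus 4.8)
The statement concerns two invariance properties of the triangularity index: $\de{F^{-1}}=\de{F}$ (multiplicative inverse in $\mathcal{B}$, resp. $\mathcal{G}$) and $\de{F^{\circ-1}}=\de{F}$ (compositional inverse in the group $\mathcal{B}$, resp. $\mathcal{G}$). In both cases the natural strategy is to prove the inequality $\de{F^{-1}}\ge\de{F}$ (resp. $\de{F^{\circ-1}}\ge\de{F}$) and then obtain equality by applying the same inequality to $F^{-1}$ (resp. $F^{\circ-1}$), using $(F^{-1})^{-1}=F$ and $(F^{\circ-1})^{\circ-1}=F$.

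\textbf{Multiplicative inverse.} Write $F=\id+f$ with $f\in\mathfrak{g}$ (or the analogous decomposition $F=1+f$ in $\mathcal{A}[[\epsilon]]$, since the relevant inverse here is the one making $\mathcal{B}=\id+\epsilon\Hol[[\epsilon]]$ a group under multiplication). Then $F^{-1}=\sum_{n\ge0}(-f)^n=\id-f+f^2-\cdots$, a convergent series in the $\epsilon$-adic topology since $f$ has positive $\epsilon$-order. I would apply Lemma~\ref{lem:de with add and mult}: part (2) gives $\de{f^n}\ge n\,\de{f}\ge\de{f}$ for all $n\ge1$ (using $\de{f}\ge1$), and $\de{\id}=+\infty$ in the relevant convention, so part (1) applied to the (locally finite, in each $\epsilon$-degree) sum yields $\de{F^{-1}}=\de{\sum_{n\ge0}(-f)^n}\ge\min_{n\ge1}\de{f^n}=\de{f}=\de{F}$. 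Symmetrizing via $(F^{-1})^{-1}=F$ gives equality.

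\textbf{Compositional inverse.} Here I would use Corollary~\ref{cor:De with comp}(2): $\de{F\circ G}\ge\min\{\de{F},\de{G}\}$. Set $G=F^{\circ-1}$, so $F\circ G=\id$, which has infinite triangularity index; hence the inequality $\de{\id}\ge\min\{\de{F},\de{F^{\circ-1}}\}$ is vacuous and gives nothing directly. Instead I would argue by induction on the $\epsilon$-degree, or more cleanly as follows: let $s=\de{F}$ and write $F=\id+f$, $F^{\circ-1}=\id+g$. From the compositional identity $(\id+f)\circ(\id+g)=\id$ we get $g=-f\circ(\id+g)$, i.e. $g=-f-(f\circ(\id+g)-f)$. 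By Lemma~\ref{lem:composition triangularity index s}, $\de{f\circ(\id+g)-f}\ge\de{f}+\de{g}\ge s+1>s$ provided $\de{g}\ge1$, which holds on $\mathcal{B}$ (resp. $\mathcal{G}$). Then Lemma~\ref{lem:de with add and mult}(1) gives $\de{g}\ge\min\{\de{f},\de{f\circ(\id+g)-f}\}=\de{f}=s$, using that the second term has strictly larger index. Since $\de{g}\ge s$ feeds back consistently, this establishes $\de{F^{\circ-1}}\ge\de{F}$; applying the same to $F^{\circ-1}$ and using $(F^{\circ-1})^{\circ-1}=F$ gives $\de{F}\ge\de{F^{\circ-1}}$, hence equality.

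\textbf{Main obstacle.} The only subtlety I anticipate is the mild circularity in the compositional case: the bound on $\de{g}$ is used inside Lemma~\ref{lem:composition triangularity index s} to conclude the bound on $\de{g}$. This is resolved by induction on the $\epsilon$-adic filtration — one shows $\de{(F^{\circ-1})_{\le N}}\ge s$ for each truncation $N$, the inductive step being exactly the computation above with the higher-order terms already controlled by the induction hypothesis. I would also double-check the degenerate conventions ($\val{0}=-\infty$, $\de{\id}=+\infty$) so that the $\min$ in Lemma~\ref{lem:de with add and mult}(1) behaves correctly when some homogeneous component vanishes; this is bookkeeping rather than a genuine difficulty.
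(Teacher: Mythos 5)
Your treatment of the compositional inverse is correct in substance, but the step you phrase as ``$\de{g}\ge 1$, which holds on $\mathcal{B}$ (resp.\ $\mathcal{G}$)'' is not automatic, so the ``mild circularity'' you flag is a genuine issue and the inductive fix is not optional. The triangularity index of a general element of $\mathfrak{g}$ can be zero or negative (for instance $\de{\ep\, m_2(\delta_1)}=1-2+1=0$); the paper's remark that one ``only considers $f$ with $\de{f}\ge1$'' is a restriction on the inputs, whereas for $g$ coming from the inverse the bound $\de{g}\ge1$ is precisely part of what is being proved. Your induction on the $\epsilon$-order does close this: extracting the $\ep^N$-coefficient of $g=-f\circ(\id+g)$ expresses $g_N$ as a differential polynomial in $f_1,\dots,f_N$ and $g_1,\dots,g_{N-1}$, whence $\val{g_N}\le\max\bigl\{\val{f_i},\val{g_k}\bigr\}\le N-\de{F}+1$ by the induction hypothesis. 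Note, however, that at this point your argument has collapsed into the paper's proof, which bypasses the lemmas on $\de{}$ entirely: the coefficients $\tilde f_i$ of the inverse are differential polynomials in $f_1,\dots,f_i$, so $\val{\tilde f_i}\le\max_{i'\le i}\val{f_{i'}}$, giving $\de{F^{-1}}\ge\de{F}$, and equality by applying the same to the inverse. So your route is correct once completed, but the completion is the paper's one-line argument; the detour through Lemma~\ref{lem:composition triangularity index s} buys nothing extra here.

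Concerning your first half: you read $F^{-1}$ as a reciprocal and expand $\sum_{n\ge0}(-f)^n$, but in the paper both $F^{-1}$ and $F^{\circ-1}$ have leading term $\id=t$ (its proof writes $F^{-1}=t+\sum_{i>0}\ep^i\tilde f_i$), i.e.\ both denote inverses for composition, in $\mathcal{G}$ and in $\mathcal{B}$ respectively, handled by one and the same coefficient argument. A multiplicative inverse of $\id+f$ does not even exist in $\Hol[[\ep]]$ or $\mathcal{A}[[\ep]]$, since $t$ is not a unit there; your geometric-series computation proves a statement about elements of the form $1+\ep(\cdots)$, which is not the one used later. This misreading is harmless only because your compositional argument covers both of the paper's statements, and in particular everything needed for the commutator estimate in Proposition~\ref{prop:proposition on commutators}.
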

\begin{proof}
    For $F^{-1}=t+\sum_{i>0}\ep^i\tilde{f}_i$,  the coefficients $\tilde{f}_i$ are polynomials in $f_1,...,f_i$, so $\val{\tilde{f}_i}\le\max_{i'\le i}\val{f_{i'}}$. Thus $\de{F^{-1}}\ge\de{F}$, and, by symmetry, we have the opposite equality as well.
    The same proof works for ${F^{\circ-1}}$.
\end{proof}

\begin{proposition}\label{prop:proposition on commutators}
    $\de{F\circ G \circ F^{-1}\circ G^{-1}}\ge\de{F}+\de{G}$.
\end{proposition}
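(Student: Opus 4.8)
The plan is to reduce the compositional commutator to a single composition applied to a ``difference'' term that already has large triangularity index, and then invoke Corollary~\ref{cor:De with comp}(1). Write $F=\id+f$ and $G=\id+g$ with $f,g\in\mathfrak g$, so that $\de f=\de F=:a$ and $\de g=\de G=:b$ (using the convention $\de{\id+h}=\de h$). From the composition law on $\mathcal G$ one has $F\circ G=(\id+g)+f\circ(\id+g)$ and $G\circ F=(\id+f)+g\circ(\id+f)$; writing these as $\id+p$ and $\id+q$ respectively, subtraction gives
\[
p-q=\bigl(f\circ(\id+g)-f\bigr)-\bigl(g\circ(\id+f)-g\bigr).
\]
Lemma~\ref{lem:composition triangularity index s} bounds each bracketed term by $a+b$, so Lemma~\ref{lem:de with add and mult}(1) yields $\de{p-q}\ge a+b$. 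This is the quantitative core of the argument.

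Next I would use the elementary group identity $F\circ G\circ F^{-1}\circ G^{-1}=(F\circ G)\circ(G\circ F)^{-1}$, valid since $(G\circ F)^{-1}=F^{-1}\circ G^{-1}$ in the group $(\mathcal G,\circ)$. Writing $(G\circ F)^{-1}=\id+\bar q$, the defining relation $(\id+q)\circ(\id+\bar q)=\id$ expands, via the composition law, to $\bar q+q\circ(\id+\bar q)=0$, i.e. $\bar q=-\,q\circ(\id+\bar q)$. Substituting this and using that composition on the right is linear in the left argument,
\[
F\circ G\circ F^{-1}\circ G^{-1}-\id=(\id+p)\circ(\id+\bar q)-\id=\bar q+p\circ(\id+\bar q)=(p-q)\circ(\id+\bar q).
\]
Finally, Corollary~\ref{cor:De with comp}(1) gives $\de{(p-q)\circ(\id+\bar q)}\ge\de{p-q}\ge a+b$, and by the convention $\de{\id+h}=\de h$ this is precisely $\de{F\circ G\circ F^{-1}\circ G^{-1}}\ge\de F+\de G$.

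Once the two observations are in place the rest is bookkeeping, so I do not expect a genuine obstacle; the one point requiring care is checking that every intermediate series (in particular $p$, $q$, $\bar q$, and $p-q$) lies in $\mathfrak g$, so that it has positive triangularity index and the lemmas of this section apply, and keeping track of the normalization $\de{\id+h}=\de h$ so that the output is literally the claimed inequality. A stylistic alternative would be to phrase the whole proof as the standard ``filtered group'' estimate, deriving $\de{[F,G]}\ge\de F+\de G$ from the three facts $\de{F\circ G}\ge\min\{\de F,\de G\}$, $\de{F^{-1}}=\de F$, and the refined estimate $\de{F\circ(\id+g)-F}\ge\de F+\de g$; the computation above is a direct, self-contained instance of this.
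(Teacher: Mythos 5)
Your proof is correct and follows essentially the same route as the paper's: you bound $\de{F\circ G-G\circ F}=\de{p-q}\ge\de{F}+\de{G}$ via Lemma~\ref{lem:composition triangularity index s} and Lemma~\ref{lem:de with add and mult}, and then apply Corollary~\ref{cor:De with comp}(1) to the same expression $(F\circ G-G\circ F)\circ\left(F^{-1}\circ G^{-1}\right)$ that the paper uses. The only difference is that you verify in detail (via the relation $\bar q=-q\circ(\id+\bar q)$) the identity $F\circ G\circ F^{-1}\circ G^{-1}=\id+(F\circ G-G\circ F)\circ\left(F^{-1}\circ G^{-1}\right)$, which the paper merely notes without proof.
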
  
\begin{proof}
Denote as above $F=\id+f$, $G=\id+g$, $f,g\in\mathcal{B}$.

Note that $F\circ G \circ F^{-1}\circ G^{-1}=\id + (F\circ G - G\circ F)\circ\left(F^{-1}\circ G^{-1}\right)$, so we need to prove that 
\begin{equation}\label{eq:to prove for commutators}
    \de{(F\circ G - G\circ F)\circ\left(F^{-1}\circ G^{-1}\right)}\ge\de{F}+\de{G}.
\end{equation}
But \begin{align*}
    F\circ G - G\circ F=(\id+g+f\circ(\id+g))-(\id+f+g\circ(\id+f))\\=(f\circ(\id+g)-f)-\left(g\circ(\id+f)-g\right), 
\end{align*}
so $ \de{F\circ G - G\circ F}\ge\de{F}+\de{G}$, by Lemma \ref{lem:composition triangularity index s} and Lemma~\ref{lem:de with add and mult}.
    The claim now follows from  Corollary~\ref{cor:De with comp}(1).
\end{proof}

\begin{proof}[Proof of Theorem~\ref{thm: thm universal}]
    Indeed, $\val{m_j(\delta)}\le j-\de{\scp(\delta)}+1$. As $\delta\in L_k$, one can represent $\delta$ as a product of commutators $\sigma_i\in L_i$ of the cycles $\delta_i$ of lengths $\ge k$, $\delta=\prod \delta_i$.
    Thus $\scp(\delta)$ is a composition of commutators $\scp(\sigma_i)$ of $\scp(\delta_i)$ of lengths $\ge k$. By definition, $\de{\scp(\delta_i)}=1$. Thus, by Proposition~\ref{prop:proposition on commutators}, $\de{\scp(\sigma_i)}\ge k$ and therefore $\de{\scp(\delta)}\ge k$ by Corollary~\ref{cor:De with comp}, which implies the claim of the Theorem.
\end{proof}
\begin{proof}[Proof of Theorem~\ref{Theo.M-sigma_k}]
     Theorem~\ref{Theo.M-sigma_k} follows from Theorem~\ref{thm: thm universal} by evaluation $\ev_\eta$, using the universality property in Lemma~\ref{lem:universality}.
\end{proof}
\subsection{Proof of Theorem~\ref{thm:main1}}\label{sect.Diff-Alg}

For all $q\ge 1$, choose a finite collection  $\{\sigma_{q,q'}\}_{1\le q'\le d_q}\subset \OO\cap L_q$  generating $\frac{\OO\cap L_q}{\OO\cap L_{q+1}}$.

Let $c_{r,s}$ be a finite collection of polynomials
\begin{equation}\label{eq:Drs collection def}
    c_{k,s}=\{m_s(\sigma_{q,q'}), s-k+1\le q\le s\}, 
\end{equation}
and let $C_{k,s}=\cup_{s'\le s}c_{k, s'}$ and $C_{k}=\cup_s C_{k,s}$.
\begin{figure}[h]
    \centering
    \includegraphics[width=0.5\linewidth]{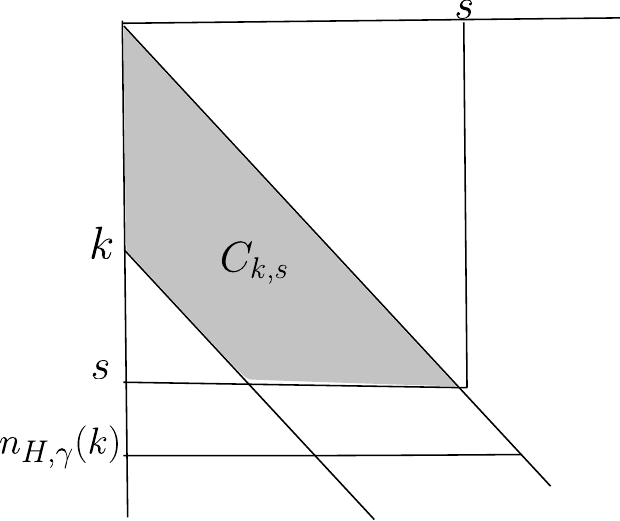}
   \caption{The first $k$ diagonals truncated at order $s$ (which equals to the length $s$).}
   \label{table:Cr-Dr}
\end{figure}
\begin{lemma}\label{lem:Crs generation}
    Let $\delta\in \OO\cap L_q$ and let $s-k+1\le q\le s$. Then $m_j(\delta)$ can be expressed as a differential polynomial of elements of $C_{k,s}$.
\end{lemma}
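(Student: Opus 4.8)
The plan is to combine Theorem~\ref{thm: thm universal} with the combinatorial structure of the lower central series of $\OO$, reducing everything to the fact that the triangularity index controls the universal length. First I would recall the meaning of the collection $C_{k,s}$: it consists of all $m_{s'}(\sigma_{q,q'})$ with $s'\le s$ and $s'-k+1\le q\le s'$, i.e.\ the entries of the first $k$ diagonals of the pairing matrix~\eqref{table-M_i}, truncated at order $s$ (Figure~\ref{table:Cr-Dr}). Fix $\delta\in\OO\cap L_q$ with $s-k+1\le q\le s$. Since the chosen elements $\{\sigma_{q,q'}\}_{q'}$ generate $\frac{\OO\cap L_q}{\OO\cap L_{q+1}}$, one can write $\delta$ modulo $\OO\cap L_{q+1}$ as a word in the $\sigma_{q,q'}$; iterating this descent along the lower central series, $\delta$ is expressed as a finite product of commutators, each of which is a commutator (of some length $\ge q$) of elements $\sigma_{q',q''}$ with $q'\ge q\ge s-k+1$. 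Applying the universal holonomy $\scp$, which is an anti\-/homomorphism into $\mathcal{G}$, turns this into a statement about $\scp(\delta)$ as a composition of iterated commutators of the $\scp(\sigma_{q',q''})$ in the group $\mathcal{G}$.

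Next I would run the triangularity bookkeeping. By construction $\de{\scp(\sigma_{q',q''})}\ge q'$ is false in general — rather, $\scp(\sigma_{q',q''})=t+\sum_{j>0}\epsilon^j m_j(\sigma_{q',q''})$ and Theorem~\ref{thm: thm universal} gives $\val{m_j(\sigma_{q',q''})}\le j-q'+1$, hence $\de{\scp(\sigma_{q',q''})}\ge q'$. The key point is then that the components $m_j(\sigma_{q',q''})$ with $q'\le j\le q'+k-1$, i.e.\ those lying in the first $k$ diagonals and of order $j\le s$, are exactly the generators collected in $C_{k,s}$; by the same theorem these have $\val{\cdot}\le k$, so they lie in $\A_k$. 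Using Proposition~\ref{prop:proposition on commutators}, Corollary~\ref{cor:De with comp}, and Lemma~\ref{lem:de with add and mult}, the composition/commutator operations only increase the triangularity index and express the coefficients of $\scp(\delta)$ as differential polynomials in the coefficients of the constituent $\scp(\sigma_{q',q''})$. Tracking the orders: the order-$j$ coefficient $m_j(\delta)$ of $\scp(\delta)$ is obtained by composing finitely many operations, and each such operation feeds on coefficients $m_{j'}(\sigma_{q',q''})$ with $j'\le j\le s$ and triangularity index contributions that force $j'-q'+1\le k$, i.e.\ $j'\le q'+k-1$; these are precisely the elements of $C_{k,s}$. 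Hence $m_j(\delta)$ is a differential polynomial in elements of $C_{k,s}$, as claimed.

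The main obstacle I anticipate is the careful order-truncation argument: one must verify that no coefficient $m_{j'}(\sigma_{q',q''})$ of order $j'>s$ is ever needed to express $m_j(\delta)$ for $j\le s$ — and, dually, that no element from a diagonal beyond the $k$-th is needed. The first follows because composition with $\id+g$ and commutator brackets can only raise the $\epsilon$-order (the bracket~\eqref{eq:vector} raises it strictly, Taylor composition~\eqref{eq:Taylor in B} does not lower it), so order-$\le s$ output depends only on order-$\le s$ input. The second follows from the triangularity estimates: any constituent coefficient used in $m_j(\delta)$ satisfies $j - \val{m_j(\delta)} + 1 \ge \de{\scp(\delta)}\ge q\ge s-k+1$, while $q\le s$, so the relevant constituents $m_{j'}(\sigma_{q',q''})$ all have $j'-q'+1\le k$. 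Once these two truncation facts are nailed down, the lemma is just the assertion that the free differential algebra generated by $C_{k,s}$ contains all of them, which is immediate.
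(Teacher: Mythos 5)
Your overall skeleton (write $\delta$ through the chosen generators of the graded pieces $\frac{\OO\cap L_{q'}}{\OO\cap L_{q'+1}}$, apply the universal holonomy $\scp$, expand the composition, and track $\epsilon$-orders) is exactly the paper's route, but two steps, as you wrote them, do not hold up. First, you assert that iterating the descent expresses $\delta$ \emph{exactly} as a finite product built from the $\sigma_{q',q''}$. In a free group this is false in general: each stage of the descent only pushes the leftover one level deeper in the lower central series, so after finitely many steps you are left with a genuine remainder $\delta_{s+1}\in L_{s+1}$, not with the identity. The paper writes $\delta=\left(\prod_i\sigma_{q_i,q'_i}\right)\delta_{s+1}$ and kills the remainder with Lemma~\ref{lem:Gk small}: since $\scp(\delta_{s+1})\in\mathcal{G}_{s+1}\subset\{\id\}+O(\ep^{s+1})$, it cannot influence the coefficients of $\ep^j$ for $j\le s$. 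Your proposal never introduces this remainder nor invokes Lemma~\ref{lem:Gk small}, and without that the finite-product claim on which the whole expansion rests is unjustified.

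Second, your justification that only the first $k$ diagonals are needed is a non sequitur. The chain $j-\val{m_j(\delta)}+1\ge\de{\scp(\delta)}\ge q$ only gives $m_j(\delta)\in\A_{j-q+1}\subseteq\A_k$, i.e.\ it controls the expression of $m_j(\delta)$ in the \emph{free} generators $m_{j''}(\delta_i)$; it says nothing about which constituents $m_{j'}(\sigma_{q',q''})$ occur when one expands the product, which is what membership in the differential algebra generated by $C_{k,s}$ requires. The correct bookkeeping is an elementary order count in the expansion: each factor $\scp(\sigma_{q_i,q'_i})$ equals $\id+O(\ep^{q_i})$ with $q_i\ge q\ge s-k+1$, so in any monomial contributing to the coefficient of $\ep^j$ with $j\le s$, a constituent $m_{j'}(\sigma_{q_i,q'_i})$ is either the unique factor, in which case $j'=j\le s\le q_i+k-1$, or it shares the order budget with at least one other factor of order $\ge q$, in which case $j'\le s-q\le k-1\le q_i+k-1$; in both cases $j'\le s$ and $j'\le q_i+k-1$, so the constituent lies in $C_{k,s}$. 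With the remainder handled via Lemma~\ref{lem:Gk small} and this counting replacing the appeal to the triangularity of $\scp(\delta)$, your argument becomes the paper's proof.
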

\begin{proof}
    Write $\delta=\left(\prod_i\sigma_{q_i,{q'}_i}\right) \delta_{s+1}$, with ${q\le q_i\le s+1}$ and $\delta_{s+1}\in L_{s+1}$. Note that, by Lemma~\ref{lem:Gk small},
    \begin{equation*}
        \scp(\delta_{s+1})\subset \mathcal{G}_{s+1}\subset \{id\}+O(\ep^{s+1}).
    \end{equation*} Therefore, applying $\scp$, we obtain  the composition equality
\begin{equation*}
    \scp(\delta) =\left(\id+O(\ep^{s+1})\right)\circ\left(\prod _i\scp(\sigma_{q_i,{q'}_i})\right) =\prod \scp(\sigma_{q_i,{q'}_i})+O(\ep^{s+1}).
\end{equation*}
Developing this equality using composition rules and comparing the coefficients of $\ep^s$ on both sides proves the statement of the Lemma.
\end{proof}

By Theorem~\ref{thm: thm universal}, $C_{k}\subset \A_k$. Denote by $\mathcal{C}_{k,s}, \mathcal{C}_k$ the differential ideals of $\A_k$ generated by $C_{k,s}, C_{k}$, respectively. 
\begin{corollary}
    \begin{equation*}
    \mathcal{C}_{k,s}=\left<m_j(\delta), \delta\in \OO\cap L_{s'}, s'\le s, j\le s'+k-1\right>.
\end{equation*}
\end{corollary}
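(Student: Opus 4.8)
The plan is to prove the two inclusions. For the inclusion $\mathcal{C}_{k,s}\subseteq\left<m_j(\delta)\,:\,\delta\in\OO\cap L_{s'},\ s'\le s,\ j\le s'+k-1\right>$, I note that $\mathcal{C}_{k,s}$ is by definition generated by $C_{k,s}=\bigcup_{s'\le s}c_{k,s'}$, where $c_{k,s'}=\{m_{s'}(\sigma_{q,q'})\,:\,s'-k+1\le q\le s'\}$. Each such generator $m_{s'}(\sigma_{q,q'})$ has $\sigma_{q,q'}\in\OO\cap L_q$ with $q\ge s'-k+1$, i.e. $s'\le q+k-1$, and $s'\le s$ since $s'$ ranges over indices $\le s$; so it appears on the right-hand side with $\delta=\sigma_{q,q'}$ and $j=s'$. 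This gives one containment essentially from the definitions.

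For the reverse inclusion, I would take any generator $m_j(\delta)$ of the right-hand ideal, with $\delta\in\OO\cap L_{s'}$, $s'\le s$, and $j\le s'+k-1$. If $j<s'$ then $m_j(\delta)=0$ (by Theorem~\ref{thm: thm universal}, since $\de{\scp(\delta)}\ge s'$ forces $\val{m_j(\delta)}\le j-s'+1\le 0$, so $m_j(\delta)=0$ by Remark~\ref{rem:positive length}), and $0$ lies in every ideal. So assume $s'\le j\le s'+k-1$, equivalently $j-k+1\le s'\le j$. I want to apply Lemma~\ref{lem:Crs generation} with its ``$s$'' equal to $j$ and its ``$q$'' equal to $s'$: the hypothesis $j-k+1\le s'\le j$ is exactly the Lemma's hypothesis ``$s-k+1\le q\le s$''. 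The Lemma then expresses $m_j(\delta)$ as a differential polynomial in elements of $C_{k,j}$. Since $j\le s$ we have $C_{k,j}=\bigcup_{s'\le j}c_{k,s'}\subseteq\bigcup_{s'\le s}c_{k,s'}=C_{k,s}$, so $m_j(\delta)$ lies in the differential ideal $\mathcal{C}_{k,s}$ generated by $C_{k,s}$. Hence every generator of the right-hand ideal lies in $\mathcal{C}_{k,s}$, giving the containment.

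The only subtle point worth checking carefully is the book-keeping of indices in the application of Lemma~\ref{lem:Crs generation}: one must match the Lemma's order/length parameter to $j$ (not to $s$) and verify that $\delta\in\OO\cap L_{s'}$ with $s'$ in the window $[j-k+1,j]$, and then observe the monotonicity $C_{k,j}\subseteq C_{k,s}$ coming from $j\le s$. There is no real obstacle beyond this indexing; the mathematical content is entirely carried by Theorem~\ref{thm: thm universal} (which both kills the vanishing range $j<s'$ and is invoked inside Lemma~\ref{lem:Crs generation}) and by Lemma~\ref{lem:Crs generation} itself, whose proof in turn rests on Lemma~\ref{lem:Gk small} and the composition rules in $\mathcal{G}$. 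I would therefore present the argument as a short two-inclusion verification, spelling out the $j<s'$ degenerate case explicitly so the reader sees why the ideal on the right is not larger than one might fear.
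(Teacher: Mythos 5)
Your two-inclusion scheme is exactly the paper's (its entire proof is one line: one inclusion from Lemma~\ref{lem:Crs generation}, the other because the right-hand side contains the generators of $C_{k,s}$), and your easy inclusion, as well as your disposal of the degenerate range $j<s'$ via Theorem~\ref{thm: thm universal} and Remark~\ref{rem:positive length}, are fine. The trouble is precisely the step you single out as ``the only subtle point'': the inequality $j\le s$ does not follow from the constraints you are given. A generator of the right-hand side only satisfies $s'\le s$ and $j\le s'+k-1$, so for $k\ge 2$ the index $j$ can be as large as $s+k-1$. For such a generator, applying Lemma~\ref{lem:Crs generation} with truncation level $j$ expresses $m_j(\delta)$ through $C_{k,j}$, and $C_{k,j}\not\subseteq C_{k,s}$, so your chain of inclusions breaks exactly there.

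Moreover, for those extra generators the inclusion is not merely unproved: read literally, it fails, so the gap cannot be closed without adjusting the statement. Take $k=2$, $s=1$ and a generator $\sigma_{1,1}$ of $\OO_1$ (nontrivial modulo $L_2$, as in all the examples). The right-hand side contains $m_2(\sigma_{1,1})$, while $\mathcal{C}_{2,1}$ is the differential ideal of $\A_2$ generated by the $m_1(\sigma_{1,q'})$, which are $\Z$-linear combinations of the free generators $m_1(\delta_i)$. The differential-algebra endomorphism of $\A_2$ defined on generators by $m_1(\delta_i)\mapsto 0$, $m_2(\delta_i)\mapsto m_2(\delta_i)$ annihilates $\mathcal{C}_{2,1}$ but sends $m_2(\sigma_{1,1})$ to $\sum_i c_i\, m_2(\delta_i)\neq 0$, where the $c_i$ are the exponent sums of the word $\sigma_{1,1}$; hence $m_2(\sigma_{1,1})\notin\mathcal{C}_{2,1}$. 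The reading consistent with the figure caption (``the first $k$ diagonals truncated at order $s$'') and with the only use of the corollary, in the proof of Theorem~\ref{thm:abstract2.3}, is that the generating set on the right carries the additional constraint $j\le s$. With that constraint made explicit, your argument --- apply Lemma~\ref{lem:Crs generation} with its truncation parameter equal to $j$, then use $C_{k,j}\subseteq C_{k,s}$ --- is complete and coincides with the paper's; without it, your reverse inclusion (and the equality itself) fails for the generators with $s<j\le s'+k-1$. So either state and use $j\le s$, or restrict the claim accordingly.
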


\begin{proof}
Indeed, the inclusion follows from  Lemma~\ref{lem:Crs generation}, and the opposite inclusion follows since the right side contains generators of  
 $C_{k,s}$.\end{proof}

\begin{customthm}{A'}\label{thm:abstract2.3}
    For each $k\in\N$, there exists $n=n_{H,\gamma}(k)\in\N$ such that, for all $\delta\in\OO\cap L_\ell$, $\ell\ge 0$,   some power of $m_j(\delta)\in \A_k$, $ j\le \ell+k-1$, is a given by a differential polynomial in  $m_{j'}(\sigma_{q,q'})$, where  $1\le q\le n$  and $q\le j'\le q+k-1$.
\end{customthm}

\begin{proof}[Proof of  Theorem \ref{thm:abstract2.3}]
By the very definition, $C_{k,s}\subset C_{k, s+1}$. Thus we have an increasing chain of radical ideals
$$\sqrt{\CC_{k,1}}\subseteq\sqrt{\CC_{k,2}}\subseteq\cdots\subseteq\sqrt{\CC_{k,h}}\subseteq\cdots\le \A_k.$$

By Ritt-Raudenbush Theorem \ref{thm:Ritt-Raudenbush}, this increasing chain of ideals stabilizes. That is, there is an $n\in\N$, such that,
\begin{equation}\label{eq:n-chain}
    \sqrt{\CC_{k,1}}\subseteq\sqrt{\CC_{k,2}}\subseteq\cdots\subseteq\sqrt{\CC_{k,n}}=\sqrt{\CC_{k,n+1}}=\cdots.
\end{equation}    
 This number $n=n_{H,\gamma}(k)$ verifies the conclusions of Theorem \ref{thm:main1}.

Indeed, this means that for any $\delta\in L_s\cap \OO$, $s\ge n$, and $j\le s+k-1$ there is a $d=d(\delta, j, k)$ such that
\begin{equation}\label{eq:RR in universal}
    m_j(\delta)^d=\sum_{1\le q\le n}\sum_{q'}\sum_{q\le j'\le q+k-1}\sum_{\alpha\ge 0} c_{q,{q'},j',\alpha} m_{j'}(\sigma_{q,{q'}})^{(\alpha)}, 
\end{equation}
where  $c_{q,{q'},j',\alpha}\in \A_k$ and $m_{j'}(\sigma_{q,{q'}})^{(\alpha)}$ are derivatives of $m_{j'}(\sigma_{q,{q'}})$ of order $\alpha$. Note that this remains true for $s< n$ by Lemma~\ref{lem:Crs generation}.
\end{proof} 
\begin{proof}[Proof of Theorem ~\ref{thm:main1}]
Let $\eta$ be a differential form as in \eqref{eq:deformation}. Recall that $\ev_\eta: \A_k\to\A_k^\eta\subset\Hol$ is a homomorphism of differential algebras and 
$\ev_\eta\left(m_{j'}(\sigma_{q,{q'}})\right)=M_{j'}^{\sigma_{q,{q'}}}$. Let $k$ be given and let $n$ be given by Theorem \ref{thm:abstract2.3}. 
Then $ev_\eta\left(m_{j'}(\sigma_{q,{q'}})\right)=0,$ for $m_j'(\sigma_{q,{q'}})\in\mathcal{C}_{k,n}$ and this implies the vanishing of $M_j^{\sigma_{q,{q'}}}=ev_\eta\left(m_{j'}(\sigma_{q,{q'}})\right)=0,$ for $m_{j'}(\sigma_{q,{q'}})\in\mathcal{C}_k$, together with all their derivatives. Thus, by \eqref{eq:RR in universal}, 
\begin{equation*}
    M_j^{\delta}\equiv0,
\end{equation*}
for all $\delta\in\OO\cap L_\ell$, where $\ell\ge 0$ and $j\le \ell-k+1$.
It then follows from the definition of the orbit length that $M_j^{\gamma}$ is of orbit length less than or equal to $j-k$ for all $j\ge 0$.
\end{proof}

 The number $n=n_{H,\gamma}(k)$  can be computed in a different way. Let $$D_{r,s}:= < m_{r}(\sigma_{1}),m_{r+1}(\sigma_{2}),\dots,m_{r+s-1}(\sigma_s) >$$ be the $r$-th diagonal truncated at the $s$-th row as in Figure \ref{table:Cr-Dr}. Then for each $r$, we have  an ascending chain of radical ideals that stabilizes at some index $n_r\in\N$.
 We choose this index to coincide with the row in the table \eqref{eq:Drs collection def}. More precisely, putting
 \begin{equation}\label{eq:D_r-n_r}
    \sqrt{D_{r,1}}\subseteq \sqrt{D_{r,2}}\subseteq\cdots\subseteq\sqrt{D_{r,n_r-r+1}}=\sqrt{D_{r,n_r-r+2}}=\cdots,
     \end{equation}
 we get $\sqrt{D_{r,n_r-r+1}}=\sqrt{< m_{r}(\sigma_{1}),m_{r+1}(\sigma_{2}),\dots,m_{n_r}(\sigma_{n_r-r+1}) >}$.

Similarly, for any one form $\eta$, we define the ideals obtained by evaluation
$$
D_{r,s}^\eta=\ev_\eta(D_{r,s}).
$$
In analogy with \eqref{eq:D_r-n_r}, we define the stabilization index $\nu_r^\eta$, for the sequence of radical ideals $\sqrt{D_{r,s}^\eta}$.
We put
\begin{equation}\label{eq:nu_r}
\nu_r=\max_\eta{\nu_r^\eta}.
\end{equation}

  \begin{definition}\label{def:n_r}
      We call the stabilization index $n_r$, defined in \eqref{eq:D_r-n_r}, the \emph{universal Noetherianity index} of the $r$-th diagonal $D_r$.
      \end{definition}

 Then, by the construction, for each $k\in\N$, the  number 
 given by 
 \begin{equation}\label{eq:n}
     n=n_{H,\gamma}(k)=\max\{n_r\colon 1\le r\le k\}
 \end{equation}
 verifies the conclusion of Theorem \ref{thm:main1}.
 It is the smallest number verifying the conclusion of Theorem \ref{thm:abstract2.3}. We call the number $n_{H,\gamma}(k)$, the \emph{universal Noetherianity index}.

 Note that the \emph{Noetherianity index} index $\nu_{H,\gamma}(k)\leq n_{H,\gamma}(k)$ in Definition \ref{def:noether} is given by  
 \begin{equation}\label{eq:nu}
     \nu=\nu_{H,\gamma}(k)=\max\{\nu_r\colon 1\le r\le k\}.
 \end{equation}

\begin{remark}\label{rem:perturbations}
    The proof of  Theorem~\ref{thm:main1} doesn't use the fact that the holonomy homomorphism $P^{\delta}_\epsilon(t)$ is originated from a perturbation of a Hamiltonian foliation but only uses the group and ring properties of $\pi_1(H^{-1}(t_0),p_0)$  and $\Hol[[\ep]]$. Thus, it remains true (after a suitable reformulation) for analytic perturbations $P_\ep:\mathbb{F}_m\to\Hol[[\ep]]$ of the trivial homomorphism $P_0=\id:\mathbb{F}_m \to \{Id\}\in\Hol$.
\end{remark}

\section{Differential operators associated to holonomy}\label{sect.Diff.Op.Hol}

In this section, we study differential operators associated with holonomy maps.

\subsection{Differential operators associated to holonomy maps}\label{subsect.Diff.Op.Hol}

Consider the holonomy map with respect to a loop $\delta\in\pi_1(H^{-1}(t_0),p_0)$ 
\begin{equation}
    P^{\delta}_\epsilon(t)=t+\epsilon M^{\delta}_1(t)+\epsilon^2M_2^{\delta}(t)+\cdots.
\end{equation}

For $f\in\Hol$, we define \begin{equation}
    \p^\delta_\epsilon(f)(t):=f\circ P^\delta_\epsilon(t)\in\Hol[[\ep]].
\end{equation} Expanding $f(P_\epsilon(t))$, with respect to $\epsilon$ in $\epsilon=0$, we get

\begin{equation}\label{eq.Taylor-P_epsilon(f)}
\begin{array}{rl}
f(P_\epsilon(t))=&f(P_\epsilon(t))_{|\epsilon=0}+\epsilon\frac{\partial}{\partial \epsilon}(f(P_\epsilon(t)))_{|\epsilon=0}+\epsilon^2\frac{1}{2!}\frac{\partial^2}{\partial \epsilon^2}(f(P_\epsilon(t)))_{|\epsilon=0}+\cdots\\
\\
    =&f(t)+\epsilon f'(t) M^\delta_1(t)+\epsilon^2
    \frac{1}{2!}(f''(t)(M_1^\delta(t))^2+2f'(t)M^\delta_2(t))+\cdots.
    \end{array}
\end{equation}

That is,
\begin{equation}\label{eq.P_epsilon}
    \p_\epsilon(f)=f+\epsilon \left(M^\delta_1\frac{\partial}{\partial t}\right)(f)+\frac{\epsilon^2}{2!}\left((M^\delta_1)^2\frac{\partial^2}{\partial t^2}+2M^\delta_2\frac{\partial}{\partial t}\right)(f)+\cdots.
\end{equation}

Define differential operators $S_\ell^\delta:\Hol\rightarrow\Hol$ as follows:
\begin{equation}\label{eq.S_j^delta}
    S_\ell^\delta(f):=\frac{1}{\ell!}\frac{\partial^\ell}{\partial \epsilon^\ell}(f(P_\epsilon^\delta(t))_{|\epsilon=0}, \text{ for } \ell=1,2,...,\ \ \text{ where } S_0^\delta:=Id.
\end{equation}  
In particular \begin{equation}\label{eq.Op_S_ell}
    S_1^\delta=M^\delta_1(t)\frac{\partial}{\partial t},\  S_2^\delta=\frac{1}{2!}\left((M^\delta_1(t))^2\frac{\partial^2}{\partial t^2}+2M^\delta_2(t)\frac{\partial}{\partial t}\right),\dots
\end{equation}
Then \eqref{eq.P_epsilon} can be written as
\begin{equation}\label{eq.Op_P_epsilon}
    \p_\epsilon^\delta(f)=S^\delta_0(f)+\epsilon S_1^\delta(f)+\epsilon^2 S_2^\delta(f)+\cdots
\end{equation}
Define the vector space $$V_k=\Hol[[\epsilon]]/<\ep^{k+1}>.$$
We extend $\p_\epsilon^\delta$ to elements in $V_k$ by linearity:
\begin{equation}\label{eq.linear-extension-Vm}
    \p_\epsilon(\oplus \epsilon^jf_j)=\oplus \epsilon^j\p_\epsilon(f_j).
\end{equation}

Then 
$$\p^{\delta}_{\epsilon,k}:=Id+\epsilon S_1^\delta +\epsilon^2S_2^\delta+\cdots+\epsilon^kS_k,$$ with $S_\ell^\delta$, as in \eqref{eq.S_j^delta}. 
\begin{remark}
    In the sequel, for simplicity, we will omit the subindex $k$ in the notation of the operator $\p^{\delta}_{\epsilon,k}$ and the truncated diffeomorphism $P^{\delta}_{\epsilon,k}$.  We stress that the level of truncation $k$ is arbitrary and it depends on the order of the Melnikov function we want to study.
\end{remark}

 In particular, if we apply $\p_\epsilon^\delta$ to $f=id$ (i.e, $f(t)=t$), we recover $P_\epsilon^\delta$, as a diffeomorphism
$$\p_\epsilon^\delta(f)(t)=f(P_\epsilon^\delta(t))=P_\epsilon^\delta(t).$$

\begin{lemma}\label{Faa}
    The operator $S_{\ell}^{\delta}$, defined in \eqref{eq.S_j^delta}, can be expressed in the form

    \begin{equation}
        S_{\ell}^{\delta}=\sum_{m_1+2m_2+\cdots+\ell m_\ell=\ell}*\prod_{j=1}^\ell (M_j^{\delta}(t))^{m_j}\frac{\partial^{m_1+m_2+\cdots+ m_\ell}}{\partial t^{m_1+m_2+\cdots+ m_\ell}}
    \end{equation}    
where $m_1,m_2,\cdots,m_\ell$ are non negative integers, and $*$ denotes non specified coefficients.
In particular, the differential operator $S_{\ell}^{\delta}$ depends on $M_j^{\delta}$ and $\frac{\partial^j}{\partial t^j}$, with $1\le j\le \ell$ only, and the degrees of each $M_j^\delta$ is at most $\ell$.
\end{lemma}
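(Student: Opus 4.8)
The plan is to recognize Lemma~\ref{Faa} as a formal Fa\`a di Bruno expansion of the composition $f\circ P^\delta_\epsilon$ and to carry out the resulting multinomial bookkeeping. First I would set $u=u(\epsilon):=P^\delta_\epsilon(t)-t=\sum_{j\ge1}\epsilon^j M^\delta_j(t)\in\epsilon\,\Hol[[\epsilon]]$, a formal power series in $\epsilon$ over $\Hol$ with vanishing constant term. Since $u$ has no constant term, the Taylor expansion
\begin{equation*}
    f\bigl(P^\delta_\epsilon(t)\bigr)=f\bigl(t+u(\epsilon)\bigr)=\sum_{n\ge0}\frac{f^{(n)}(t)}{n!}\,u(\epsilon)^n
\end{equation*}
is a well-defined element of $\Hol[[\epsilon]]$ (this is exactly the composition rule \eqref{eq:Taylor in B}): only finitely many $n$ contribute to each power of $\epsilon$. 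By definition \eqref{eq.S_j^delta}, $S^\delta_\ell(f)$ is the coefficient of $\epsilon^\ell$ in this series, so $S^\delta_\ell(f)=\sum_{n\ge0}\frac{f^{(n)}(t)}{n!}\,[\epsilon^\ell]\,u(\epsilon)^n$.

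Next I would compute $[\epsilon^\ell]u(\epsilon)^n$ by expanding the $n$-th power,
\begin{equation*}
    u(\epsilon)^n=\Bigl(\sum_{j\ge1}\epsilon^j M^\delta_j\Bigr)^n=\sum_{j_1,\dots,j_n\ge1}\epsilon^{j_1+\cdots+j_n}\,M^\delta_{j_1}\cdots M^\delta_{j_n},
\end{equation*}
and collecting the ordered tuples $(j_1,\dots,j_n)$ according to the multiplicity $m_i$ with which each value $i\ge1$ occurs. The number of tuples yielding a fixed $(m_1,m_2,\dots)$ is the multinomial coefficient $n!/\prod_i m_i!$, and such a tuple contributes to $\epsilon^\ell$ precisely when $\sum_i i\,m_i=\ell$, with $\sum_i m_i=n$. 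Hence
\begin{equation*}
    [\epsilon^\ell]\,u(\epsilon)^n=\sum_{\substack{\sum_i i\,m_i=\ell\\ \sum_i m_i=n}}\frac{n!}{\prod_i m_i!}\prod_i\bigl(M^\delta_i\bigr)^{m_i}.
\end{equation*}
Substituting this back, the factor $n!$ cancels, and summing over $n$ amounts to dropping the constraint $\sum_i m_i=n$, which yields
\begin{equation*}
    S^\delta_\ell(f)=\sum_{\sum_j j\,m_j=\ell}\frac{1}{\prod_j m_j!}\Bigl(\prod_j\bigl(M^\delta_j\bigr)^{m_j}\Bigr)\,f^{(m_1+m_2+\cdots)}(t),
\end{equation*}
which is exactly the asserted formula, with the unspecified coefficient equal to $1/\prod_j m_j!$ and the order of differentiation equal to $m_1+m_2+\cdots$. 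Finally I would observe that the constraint $\sum_j j\,m_j=\ell$ forces $m_j=0$ for $j>\ell$ and $m_j\le\ell/j\le\ell$ for every $j$, which gives both the claimed dependence on $M^\delta_j$ for $1\le j\le\ell$ only and the bound $\ell$ on the degree in each $M^\delta_j$; one can sanity-check the result against the explicit operators $S^\delta_1,S^\delta_2$ in \eqref{eq.Op_S_ell}.

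I do not expect a serious obstacle: the statement is a routine consequence of the Taylor and multinomial expansions, valid purely formally in $\Hol[[\epsilon]]$, so that the truncation level $k$ plays no role (one only needs $k\ge\ell$, or works with the full series). The single point requiring care is the combinatorial step that rewrites the sum over ordered index tuples $(j_1,\dots,j_n)$ as a sum over partitions $(m_1,\dots,m_\ell)$ of $\ell$, together with the observation that the $1/n!$ of the Taylor formula cancels the multinomial denominator up to the surviving factor $\prod_j m_j!$.
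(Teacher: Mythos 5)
Your proposal is correct and follows essentially the same route as the paper: the paper's proof simply invokes the classical Fa\`a di Bruno formula together with $\frac{\partial^j}{\partial\epsilon^j}(P^\delta_\epsilon(t))_{|\epsilon=0}=j!M_j^\delta(t)$, and your Taylor-plus-multinomial computation is just an explicit derivation of that same partition-indexed expansion, with the correct coefficient $1/\prod_j m_j!$ (consistent with $S_1^\delta$ and $S_2^\delta$ in \eqref{eq.Op_S_ell}). No gap; the only difference is that you prove the cited combinatorial identity rather than quoting it.
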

\begin{proof}
The proof follows directly from the classical Fa\`a di Bruno formula, for higher derivatives of compositions applied to \eqref{eq.S_j^delta}, and the fact that $\frac{\partial^j}{\partial \epsilon^j}(P_\epsilon^\delta(t))_{|\epsilon=0}=j!M_j^\delta(t)$.

\end{proof}
\begin{corollary}\label{Prop.S_ell=M_ell}
   The operator $S_\ell^\delta$ applied to the identity gives the Melnikov function $M^\delta_\ell$. That is,   $S_\ell^\delta(id)=M^\delta_\ell$. 
\end{corollary}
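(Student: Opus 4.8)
The plan is to read off the claim directly from the definition~\eqref{eq.S_j^delta} of $S_\ell^\delta$, specialized to $f=\id$. First I would substitute $f=\id$ (i.e.\ $f(t)=t$) into \eqref{eq.S_j^delta}, so that $f\bigl(P_\epsilon^\delta(t)\bigr)=P_\epsilon^\delta(t)$. By the very definition of the Melnikov functions, $P_\epsilon^\delta(t)=t+\epsilon M_1^\delta(t)+\epsilon^2 M_2^\delta(t)+\cdots$, which is exactly the Taylor expansion in $\epsilon$ of $f(P_\epsilon^\delta(t))$ at $\epsilon=0$. Applying $\frac{1}{\ell!}\frac{\partial^\ell}{\partial\epsilon^\ell}$ and evaluating at $\epsilon=0$ then extracts the coefficient of $\epsilon^\ell$, namely $M_\ell^\delta(t)$; hence $S_\ell^\delta(\id)=M_\ell^\delta$. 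This is immediate and, in particular, is just the $\ell$-th summand of \eqref{eq.Op_P_epsilon} evaluated at $f=\id$.

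Alternatively, one can deduce the identity from Lemma~\ref{Faa}: apply the displayed formula for $S_\ell^\delta$ to $f=\id$. Since $\frac{\partial^j}{\partial t^j}\id=0$ for $j\ge 2$ and equals $1$ for $j=1$, only the terms with $m_1+m_2+\cdots+m_\ell=1$ survive, and the constraint $m_1+2m_2+\cdots+\ell m_\ell=\ell$ then forces $m_\ell=1$ and all other $m_j=0$. The single surviving term is a scalar multiple of $M_\ell^\delta(t)$, and matching it with the direct computation above pins the coefficient to $1$. I expect no genuine obstacle here; the only point requiring a moment's care in the second route is the normalization constant coming out of the Fa\`a di Bruno formula, which is precisely why I would present the first, expansion-based argument as the main proof and mention the second only as a consistency check.
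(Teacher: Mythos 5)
Your main argument is correct and is exactly the paper's (implicit) justification: the corollary is read off from the definition \eqref{eq.S_j^delta} with $f=\id$, since $\id\circ P_\epsilon^\delta=P_\epsilon^\delta$ and the normalized $\ell$-th $\epsilon$-derivative at $\epsilon=0$ extracts the coefficient $M_\ell^\delta$, precisely as the paper notes when it remarks that applying $\p_\epsilon^\delta$ to $f=\id$ recovers $P_\epsilon^\delta$. The Fa\`a di Bruno consistency check is fine but unnecessary.
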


\subsection{Matrix representation}\label{subsect.Matrix-repr}

 Note that an element $f_0+\epsilon f_1+\epsilon^2f_2+\cdots+\epsilon^kf_k\in V_k$, can be written as the vector $(f_0,f_1,f_2,\dots,f_k)$, where $\epsilon^j$ corresponds to the basic vector $e_j=(0,\dots,1,\dots,0)$.
Then, $\p^\delta_\epsilon:V_k\rightarrow V_k$ has a matrix representation $T_{\delta,k}=(T_{ij})$, with $T_{ij}:\Hol\rightarrow\Hol$ differential operators, where it acts on $(f_0,f_1,f_2,\dots,f_k)$ by evaluation on the $T_{ij}$.
\begin{lemma}\label{lem.Toeplitz}
 The matrix $T_{\delta,k}:\Hol^{k+1}\rightarrow\Hol^{k+1}$ is a lower triangular Toeplitz matrix. That is, it is of the form
\begin{equation}
    T_{\delta,k}=\begin{pmatrix}
    T_{00}&0&0&0&\cdots\\
    T_{10}&T_{00}&0&0&\cdots\\
    T_{20}&T_{10}&T_{00}&0&\cdots\\
    \vdots&\vdots&\ddots&\ddots&\vdots \\
    T_{k0}&T_{k-1,0}&\cdots&T_{10}&T_{00}
\end{pmatrix}.
\end{equation}

\end{lemma}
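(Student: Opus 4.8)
The claim has two parts: that $T_{\delta,k}$ is lower triangular, and that it is Toeplitz (constant along diagonals). Both follow from the shape of the operator $\p_\epsilon^\delta = \sum_{\ell\ge 0}\epsilon^\ell S_\ell^\delta$ together with the linear extension \eqref{eq.linear-extension-Vm}. First I would spell out how $\p_\epsilon^\delta$ acts on a general vector $\bigoplus_{j=0}^k \epsilon^j f_j\in V_k$. By \eqref{eq.linear-extension-Vm} and \eqref{eq.Op_P_epsilon},
\begin{equation*}
\p_\epsilon^\delta\Bigl(\sum_{j=0}^k \epsilon^j f_j\Bigr)=\sum_{j=0}^k \epsilon^j\, \p_\epsilon^\delta(f_j)=\sum_{j=0}^k\sum_{\ell\ge 0}\epsilon^{j+\ell} S_\ell^\delta(f_j),
\end{equation*}
and then one reduces modulo $\langle\epsilon^{k+1}\rangle$. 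Collecting the coefficient of $\epsilon^i$ on the right gives $\sum_{j=0}^{i} S_{i-j}^\delta(f_j)$, i.e. the $i$-th output component is $\sum_j T_{ij}(f_j)$ with $T_{ij}=S_{i-j}^\delta$ for $j\le i$ and $T_{ij}=0$ for $j>i$.

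The second and third displayed lines already deliver both conclusions at once. Since $T_{ij}=0$ whenever $j>i$ (the exponent $j+\ell$ can never drop below $j$, as $\ell\ge 0$), the matrix is lower triangular. Since $T_{ij}=S_{i-j}^\delta$ depends only on the difference $i-j$, the matrix is constant along each diagonal, hence Toeplitz; and the diagonal value for difference $\ell$ is precisely $S_\ell^\delta$, with $S_0^\delta=\mathrm{Id}$ on the main diagonal (consistent with the displayed form of $T_{\delta,k}$, where the main-diagonal entry is written $T_{00}=S_0^\delta=\mathrm{Id}$ and the subdiagonal entries are $T_{10}=S_1^\delta$, etc.). I would then just match this against the matrix written in the statement to confirm the notational identification $T_{i0}=S_i^\delta$.

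The only thing requiring a word of care — and the one place I would be slightly more explicit — is the legitimacy of the reindexing $\sum_j\sum_\ell \epsilon^{j+\ell}S_\ell^\delta(f_j) = \sum_i \epsilon^i\sum_{j\le i} S_{i-j}^\delta(f_j)$ inside the truncated algebra $V_k=\Hol[[\epsilon]]/\langle\epsilon^{k+1}\rangle$: this is a finite double sum once we truncate (only pairs with $j+\ell\le k$ survive), so no convergence issue arises and the rearrangement is purely formal. That the entries $T_{ij}$ are genuinely \emph{differential operators} $\Hol\to\Hol$, as asserted, is immediate from Lemma~\ref{Faa}, which exhibits $S_\ell^\delta$ as a differential operator of order $\le\ell$ with coefficients polynomial in $M_1^\delta,\dots,M_\ell^\delta$. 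There is no real obstacle here; the proof is a direct bookkeeping argument, and the main (minor) point is simply to present the coefficient-of-$\epsilon^i$ computation cleanly so that the Toeplitz pattern is manifest.
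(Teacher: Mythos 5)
Your argument is correct and is essentially the paper's own proof in different clothing: the paper reads the matrix column by column, using $\p^\delta_{\epsilon,k}(\epsilon^j f)=\epsilon^j\p^\delta_\epsilon(f)$ to see that each column is the previous one shifted down, while you extract the coefficient of $\epsilon^i$ directly from the same expansion — the same bookkeeping. Your explicit identification $T_{ij}=S^\delta_{i-j}$ in fact also proves the content of the subsequent Lemma~\ref{lemma:Matrix-T_{delta,k}}, which the paper states separately.
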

\begin{proof}

Note that, $T_{ij}(0)=0$, since $T_{ij}$ are differential operators and for any $f\in \Hol$,  the first column evaluated in $f$ is given by applying the matrix $T_{\delta,k}$ to the vertical vector $(f,0,\dots,0)^\top$. Here, and in the sequel, we denote by $\top$ the transposition. In general, the $j$-th column evaluated in $f$ is given by applying the matrix $T_{\delta,k}$ to the vector $(0,\dots,f,\dots,0)^\top$, with $f$ on the $j$-th place.
Observe that,  $v_j:=(0,\dots,f,\dots,0)^\top$ corresponds to $\epsilon^jf$, thus $T_{\delta,k}v_j$ corresponds to $\p^\delta_{\epsilon,k}(\epsilon^jf)$.
On the other hand, by the definition of the operator $\p^\delta_{\epsilon,k} $ it follows that, for any $j\ge0$, $\p^\delta_{\epsilon,k}(\epsilon^j f)=\epsilon^j \p^\delta_\epsilon(f)$. This implies that in the image of $T_{\delta,k}v_j$ the first $j-1$ terms are zero. Moreover, $\p^\delta_{\epsilon}(\epsilon^j f)=\epsilon \p^\delta_\epsilon(\epsilon^{j-1}f)$, which means that starting from the $j$-th term, all terms of  $T_{\delta,k}v_{j}$  are obtained from the previous column $T_{\delta,k}v_{j-1}$, by a shift by one downwards.
In particular, the matrix $T_{\delta,k}$ is lower triangular.

\end{proof}
\begin{lemma}\label{lemma:Matrix-T_{delta,k}}
 In the matrix $T_{\delta,k}$, the operators are given by $T_{ij}=S^\delta_{i}$. That is,
    
\begin{equation}\label{eq.matrix-T_delta}
    T_{\delta,k}=\begin{pmatrix}
    S^\delta_0&0&\cdots&0&0\\
    S^\delta_1&S^\delta_0&0&\cdots&0\\
    S^\delta_2&S^\delta_1&S^\delta_0&0&0\\
    \vdots&\vdots&\ddots&\ddots&\vdots \\
    S^\delta_{k}&S^\delta_{k-1}&\cdots&S^\delta_1&S^\delta_{0}
\end{pmatrix}.\end{equation}
\end{lemma}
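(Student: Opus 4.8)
The plan is to unwind the definitions of both $T_{\delta,k}$ and $S_\ell^\delta$ and show they produce matching entries. I already know from Lemma~\ref{lem.Toeplitz} that $T_{\delta,k}$ is lower triangular Toeplitz, so it suffices to identify the entries $T_{i0}$ in the first column, i.e. to show $T_{i0}=S_i^\delta$ for $0\le i\le k$. By the argument in the proof of Lemma~\ref{lem.Toeplitz}, the first column of $T_{\delta,k}$, evaluated on $f\in\Hol$, is obtained by applying the matrix to the vector $(f,0,\dots,0)^\top$, which corresponds to $\ep^0 f=f\in V_k$; hence that column, read as an element of $V_k$, equals $\p^\delta_{\epsilon,k}(f)$.

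The key step is then just to read off the coefficient of $\ep^i$ in $\p^\delta_{\epsilon,k}(f)$. By the definition of $\p^\delta_{\epsilon,k}$ in \eqref{eq.Op_P_epsilon} (the truncation of \eqref{eq.Op_P_epsilon}), we have
\begin{equation*}
    \p^\delta_{\epsilon,k}(f)=S_0^\delta(f)+\epsilon S_1^\delta(f)+\cdots+\epsilon^k S_k^\delta(f),
\end{equation*}
so the coefficient of $\ep^i$ is precisely $S_i^\delta(f)$. Since this holds for every $f\in\Hol$, the $(i,0)$-entry of the matrix, which is by definition the differential operator sending $f$ to the coefficient of $\ep^i$ in the image of $(f,0,\dots,0)^\top$, equals the differential operator $S_i^\delta$. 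Combining this with the Toeplitz structure from Lemma~\ref{lem.Toeplitz}, the $(i,j)$-entry equals $T_{i-j,0}=S_{i-j}^\delta$ for $i\ge j$ and $0$ for $i<j$, which is exactly the asserted form \eqref{eq.matrix-T_delta} (with the convention $S_0^\delta=\mathrm{Id}$ from \eqref{eq.S_j^delta}).

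There is no real obstacle here; the only point requiring a little care is the bookkeeping identifying ``the $(i,j)$-entry of a matrix of differential operators'' with ``the operator extracting the $\ep^i$-coefficient of the image of $\ep^j f$'', which is exactly the correspondence set up just before Lemma~\ref{lem.Toeplitz} and reused in its proof. One can phrase the whole argument in one line by saying: the action of $\p^\delta_{\epsilon,k}$ on $V_k$ in the basis $\{e_j=\ep^j\}$ has matrix whose $j$-th column is the coefficient vector of $\p^\delta_{\epsilon,k}(\ep^j f)=\ep^j\,\p^\delta_\epsilon(f)$, and by \eqref{eq.Op_P_epsilon} the coefficient vector of $\p^\delta_\epsilon(f)$ is $(S_0^\delta(f),S_1^\delta(f),\dots)$, so the matrix is the lower triangular Toeplitz matrix with symbol $\sum_i \ep^i S_i^\delta$, namely \eqref{eq.matrix-T_delta}.
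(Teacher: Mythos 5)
Your proposal is correct and follows the same route as the paper: identify the first column of $T_{\delta,k}$ as $(S_0^\delta(f),S_1^\delta(f),\dots,S_k^\delta(f))^\top$ via \eqref{eq.Op_P_epsilon}, then invoke the Toeplitz structure from Lemma~\ref{lem.Toeplitz} to fill in the remaining entries. Your version just spells out the bookkeeping between matrix entries and $\epsilon$-coefficients more explicitly than the paper does.
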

\begin{proof}
    This follows from equation \eqref{eq.Op_P_epsilon}, then the first column is given by the vector 
    $$(S^\delta_0(f), S_1^\delta(f),\cdots,S^\delta_{k}(f))^\top.$$ Now the result follows from Lemma \ref{lem.Toeplitz}.  
\end{proof}

Moreover, this matrix representation has a good behavior with respect to the holonomy of products of loops, as shown in the following Proposition.

\begin{proposition}
  The matrix representation of the operator $\p_\epsilon^{\alpha_1\alpha_2}$ and of $(\p_\epsilon^{\delta})^{-1}$ corresponds to $T_{\alpha_1}T_{\alpha_2}$ and $(T_{\delta})^{-1}$, respectively. In particular, the matrix representation of $\p_\epsilon^{[\alpha_1,\alpha_2]}$ is $[T_{\alpha_1},T_{\alpha_2}]$, for any loops $\alpha_1\,,\alpha_2$ and $\delta$.
\end{proposition}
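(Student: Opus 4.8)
The plan is to reduce the whole statement to a single algebraic fact, namely that $\delta\mapsto\p^\delta_\epsilon$ is a genuine \emph{homomorphism} of groups (not just an anti-homomorphism), and then to transport this through the matrix representation of Lemma~\ref{lem.Toeplitz} and Lemma~\ref{lemma:Matrix-T_{delta,k}}, which is multiplicative. Everything else is formal.

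First I would prove the composition rule $\p^{\alpha_1\alpha_2}_\epsilon=\p^{\alpha_1}_\epsilon\circ\p^{\alpha_2}_\epsilon$. By definition $\p^\delta_\epsilon(f)=f\circ P^\delta_\epsilon$, while the holonomy is an anti-homomorphism, $P^{\alpha_1\alpha_2}_\epsilon=P^{\alpha_2}_\epsilon\circ P^{\alpha_1}_\epsilon$ by \eqref{eq:anti homo}. The key computation is then, for $f\in\Hol$,
\[
\p^{\alpha_1\alpha_2}_\epsilon(f)=f\circ P^{\alpha_2}_\epsilon\circ P^{\alpha_1}_\epsilon=\bigl(f\circ P^{\alpha_2}_\epsilon\bigr)\circ P^{\alpha_1}_\epsilon=\p^{\alpha_1}_\epsilon\bigl(\p^{\alpha_2}_\epsilon(f)\bigr),
\]
the last equality because for a series $g_\epsilon=\sum_j\ep^j g_j$ one has $g_\epsilon\circ P^{\alpha_1}_\epsilon=\sum_j\ep^j\bigl(g_j\circ P^{\alpha_1}_\epsilon\bigr)=\p^{\alpha_1}_\epsilon(g_\epsilon)$ by the linear extension \eqref{eq.linear-extension-Vm}. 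In words: the order reversal produced by precomposition exactly cancels the order reversal built into the anti-homomorphism property. Running the same computation modulo $\ep^{k+1}$ gives $\p^{\alpha_1\alpha_2}_{\epsilon,k}=\p^{\alpha_1}_{\epsilon,k}\circ\p^{\alpha_2}_{\epsilon,k}$ on $V_k$.

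Next I would record that, writing an operator on $V_k=\bigoplus_{j=0}^{k}\ep^j\Hol$ as a matrix of $\C$-linear operators $\Hol\to\Hol$, composition of operators corresponds to the matrix product with composed entries: if $A,B$ have matrices $(A_{li}),(B_{ij})$ then $A\bigl(B(\sum_j\ep^jf_j)\bigr)=\sum_l\ep^l\sum_j\bigl(\sum_i A_{li}\circ B_{ij}\bigr)(f_j)$, so $A\circ B$ has matrix $\bigl(\sum_i A_{li}\circ B_{ij}\bigr)$. Combined with the previous step this yields $T_{\alpha_1\alpha_2}=T_{\alpha_1}T_{\alpha_2}$. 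For the inverse, $P^\delta_\epsilon=\id+O(\ep)$ is an invertible formal diffeomorphism with inverse $P^{\delta^{-1}}_\epsilon$; applying the homomorphism property to $\delta\delta^{-1}=e$ (whose holonomy is $\id$, represented by the identity matrix) gives $\p^{\delta^{-1}}_\epsilon=(\p^\delta_\epsilon)^{-1}$, hence $T_{\delta^{-1}}=(T_\delta)^{-1}$; equivalently, $T_\delta$ is lower triangular Toeplitz with diagonal entry $S^\delta_0=\id$ by Lemma~\ref{lemma:Matrix-T_{delta,k}}, so it is invertible in the ring of such matrices.

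Finally, writing $[\alpha_1,\alpha_2]=\alpha_1\alpha_2\alpha_1^{-1}\alpha_2^{-1}$ and applying the two rules above, the matrix of $\p^{[\alpha_1,\alpha_2]}_\epsilon$ is $T_{\alpha_1}T_{\alpha_2}(T_{\alpha_1})^{-1}(T_{\alpha_2})^{-1}=[T_{\alpha_1},T_{\alpha_2}]$, the group commutator in the group of invertible lower triangular Toeplitz matrices. I do not expect a real obstacle here; the only points requiring care are purely bookkeeping — checking that the composition of the diffeomorphisms truncated at order $k$ agrees modulo $\ep^{k+1}$ with the truncation of the untruncated composition, and that multiplying matrices whose entries are the non-commuting differential operators $S^\delta_\ell$ is legitimate — both of which are routine once stated precisely.
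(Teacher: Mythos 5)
Your proposal is correct and follows essentially the same route as the paper: it establishes $\p^{\alpha_1\alpha_2}_\epsilon=\p^{\alpha_1}_\epsilon\circ\p^{\alpha_2}_\epsilon$ from the anti-homomorphism property $P^{\alpha_1\alpha_2}_\epsilon=P^{\alpha_2}_\epsilon\circ P^{\alpha_1}_\epsilon$ together with associativity of composition, then transfers this to the matrices $T_\delta$ via their action on the basis vectors (equivalently, the linear extension), obtaining $T_{\alpha_1\alpha_2}=T_{\alpha_1}T_{\alpha_2}$ and deducing the inverse and commutator statements formally. The extra bookkeeping you spell out (action on $\ep$-series, multiplicativity of the matrix representation with operator-valued entries) is exactly what the paper's proof does implicitly by checking on the vectors $v_j$.
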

\begin{proof}
We know, by expression \eqref{eq.Op_P_epsilon}, that  $$\p^{\alpha_1\alpha_2}_{\epsilon}(f)=S_0^{\alpha_1\alpha_2}(f)+\epsilon S_1^{\alpha_1\alpha_2}(f)+\epsilon^2 S_2^{\alpha_1\alpha_2}(f)+\cdots+\epsilon^k S_k^{\alpha_1\alpha_2}(f).$$
  On the other hand, by definition,
  $\p^{\alpha_1\alpha_2}_{\epsilon}(f)=f\circ P^{\alpha_1\alpha_2}_\epsilon$ and $P^{\alpha_1\alpha_2}_\epsilon=P^{\alpha_2}_\epsilon\circ P^{\alpha_1}_\epsilon$, so $$\p^{\alpha_1\alpha_2}_{\epsilon}(f)=f\circ (P^{\alpha_2}_\epsilon\circ P^{\alpha_1}_\epsilon)=(f\circ P^{\alpha_2}_\epsilon) \circ P^{\alpha_1}_\epsilon=\p^{\alpha_1}_\epsilon(\p^{\alpha_2}_\epsilon(f)).$$
  Therefore, $\p^{\alpha_1\alpha_2}_{\epsilon}=\p^{\alpha_1}_\epsilon\circ \p^{\alpha_2}_\epsilon$.
  Hence, using the correspondence between the operators $\p^\delta_{\epsilon}$ and their matrix representations $T_\delta$, we have
  $T_{\alpha_1\alpha_2}(v_1)=T_{\alpha_1}(T_{\alpha_2}(v_1))$, for $v_0=(f,0,\dots,0)^\top$. For $v_j$ given by $(0,\dots,f,\dots,0)$, with $f$ in the $j-$th place, note that $T_{\alpha_1\alpha_2}(v_j)$, corresponds to  $$\p^{\alpha_1\alpha_2}_{\epsilon}(\epsilon^jf)=\epsilon^j\p^{\alpha_1\alpha_2}_{\epsilon}(f)=\epsilon^j\p^{\alpha_1}_\epsilon(\p^{\alpha_2}_\epsilon(f))=\p^{\alpha_1}_\epsilon(\p^{\alpha_2}_\epsilon(\epsilon^j f)).$$
  This last expression corresponds to $T_{\alpha_1}(T_{\alpha_2}(v_j))$. As $v_0,\dots,v_k$ form a basis for $V_k$, then $T_{\alpha_1\alpha_2}=T_{\alpha_1}T_{\alpha_2}$, thus showing that the matrix representation of $\p_\epsilon^{\alpha_1\alpha_2}$ is $T_{\alpha_1}T_{\alpha_2}$. The second and third relation, now follow directly.
  
\end{proof}

The relation, given in Proposition \ref{Prop.S_ell=M_ell}, between operators and Melnikov functions imposes a condition between loops and operators. More precisely, as is known, if $\sigma_j$ is a commutator of length $j$, i.e., $\sigma_j\in L_j$, then $M_\ell^{\sigma_j}$ vanishes for $\ell<j$, because $M_\ell$ is an iterated integral of length at most $\ell$. Therefore, $S_\ell^{\sigma_j}$ also vanishes, if $\ell<j$. This motivates the following definition.

\begin{definition}We say that a lower triangular Toeplitz matrix $T$;
$$T=\begin{pmatrix}
    A_0&0&0&\cdots\\
    A_1&A_0&0&\cdots\\
    A_2&A_1&A_0&\cdots\\
    \vdots&\vdots&\vdots&\ddots 
\end{pmatrix}$$
is \emph{nilpotent of coindex} $k$, if $A_\ell=0$, for $0\le \ell\le k-1$.
   
\end{definition}
\begin{lemma}\hfill
\begin{enumerate}
    \item[(i)]  Let $N_1$ be nilpotent of coindex $k$ and $N_2$ be nilpotent of coindex $s$, then the product $N_1N_2$ is nilpotent of coindex $(k+s)$.
   
   \item[(ii)] Let $N$ be nilpotent of coindex $k$, and let $T:=Id+N$, then $T$ is invertible and $T^{-1}-Id$ is also nilpotent of coindex $k$.
\end{enumerate} 
\end{lemma}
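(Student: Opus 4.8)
The plan is to exploit the ring structure of lower triangular Toeplitz matrices. I would first record that such a matrix is determined by its first column $(A_0,A_1,A_2,\dots)$, and that encoding it as the series $\sum_{\ell\ge0}A_\ell x^\ell$, where $x$ is a central indeterminate and the coefficients live in the ring $R$ of differential operators $\Hol\to\Hol$ under composition, turns matrix multiplication into the Cauchy product. Indeed, from $(N_1N_2)_{ij}=\sum_l(N_1)_{il}\circ(N_2)_{lj}$ one gets, after setting $p=i-l$ and $q=l-j$, that this entry depends only on $i-j$ and equals $\sum_{p+q=i-j}A_p\circ B_q$; crucially the $A$'s stay to the left of the $B$'s, so that the noncommutativity of $R$ never has to be dealt with. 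In this language, ``$N$ is nilpotent of coindex $k$'' says precisely that the associated series is divisible by $x^k$, i.e.\ its coefficients of index $\le k-1$ all vanish. One works in $R[x]/(x^{k+1})$ in the finite $(k+1)\times(k+1)$ setting of Lemma~\ref{lem.Toeplitz}, and in (formal) power series otherwise; the arguments below are insensitive to this.

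Granting the dictionary, part (i) becomes a one-line degree count: if $A_p=0$ for $p\le k-1$ and $B_q=0$ for $q\le s-1$, then in $\sum_{p+q=\ell}A_p\circ B_q$ every surviving term has $p\ge k$ and $q\ge s$, so $\ell=p+q\ge k+s$; hence $N_1N_2$ has coindex $k+s$. Iterating, $N^j$ is nilpotent of coindex $jk$ for all $j\ge1$, which is exactly the input part (ii) needs.

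For part (ii) I would first note that the word ``nilpotent'' already forces $A_0=0$, i.e.\ $k\ge1$, so the series of $N$ is divisible by $x$. The inverse will be the Neumann series $U=\sum_{j\ge0}(-1)^jN^j$, which is well defined: in the finite case $N$ is strictly lower triangular of size $k+1$, so $N^{k+1}=0$ and the sum is finite; in the infinite case each entry receives only finitely many nonzero contributions, since $N^j$ has coindex $jk$. Then $U$ is again lower triangular Toeplitz (a sum of products of such), and a telescoping computation $(Id+N)U=\sum_{j\ge0}(-1)^jN^j+\sum_{j\ge0}(-1)^jN^{j+1}=Id$, together with the analogous $U(Id+N)=Id$, shows $T=Id+N$ is invertible with $T^{-1}=U$. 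Finally $T^{-1}-Id=\sum_{j\ge1}(-1)^jN^j$ is a sum of matrices each of coindex $\ge jk\ge k$ by part (i), hence all its coefficients of index $\le k-1$ vanish, i.e.\ $T^{-1}-Id$ is nilpotent of coindex $k$.

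I do not expect any genuine difficulty here; the whole content is the Cauchy-product picture. The only points to be careful about are bookkeeping ones: keeping the operator entries in the correct order so that the noncommutativity of $R$ stays harmless, and justifying the Neumann series, which is immediate from $N^{k+1}=0$ in the finite setting of Lemma~\ref{lem.Toeplitz} and from the coindex filtration (finitely many nonzero contributions per entry) in general.
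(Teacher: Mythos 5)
Your argument is correct, and it is essentially the paper's: the paper disposes of this lemma with ``straightforward from calculations,'' and your series encoding of lower triangular Toeplitz matrices (Cauchy product for part (i), Neumann series $\sum_{j\ge0}(-1)^jN^j$ for part (ii), with the coindex filtration or $N^{k+1}=0$ in the truncated setting guaranteeing convergence) is exactly the direct computation being alluded to. The only cosmetic point is the clash of notation between the truncation level $k$ of Lemma~\ref{lem.Toeplitz} and the coindex $k$ of the present lemma, which you conflate harmlessly in the finite case; otherwise the bookkeeping, including keeping the $A$'s to the left of the $B$'s so that noncommutativity of the operator entries never intervenes, is handled correctly.
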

\begin{proof}
    This is straightforward from calculations.
\end{proof}

\begin{lemma}
If $\sigma_k\in L_k$, then   $T_{\sigma_k}=Id+N_{\sigma_k}$, where $N_{\sigma_k}$ is nilpotent of coindex $k$.
\end{lemma}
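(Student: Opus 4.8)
The plan is to read the statement off Lemma~\ref{lemma:Matrix-T_{delta,k}} once we know that the low-order Melnikov functions of $\sigma_k$ vanish. By Lemma~\ref{lemma:Matrix-T_{delta,k}} the matrix $T_{\sigma_k}$ is the lower triangular Toeplitz matrix whose $\ell$-th block is $S_\ell^{\sigma_k}$, and $S_0^{\sigma_k}=Id$; hence $T_{\sigma_k}=Id+N_{\sigma_k}$, where $N_{\sigma_k}$ is the strictly lower triangular Toeplitz matrix with blocks $S_1^{\sigma_k},S_2^{\sigma_k},\dots$. So everything reduces to showing that $S_\ell^{\sigma_k}=0$ for $1\le\ell\le k-1$.

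First I would record, as recalled just before the statement, that since $\sigma_k\in L_k$ and $M_j$ is an iterated integral of length at most $j$ (Fran\c coise's algorithm, Chen's theory), the function $M_j^{\sigma_k}$ vanishes identically for every $j<k$. (Alternatively, and staying inside the paper: since $\scp$ is a group homomorphism it sends $L_k$ into $\mathcal{G}_k$, so by Lemma~\ref{lem:Gk small} we get $\scp(\sigma_k)\in\{id\}\oplus\bigoplus_{j\ge k}\epsilon^j\mathfrak{m}$, i.e.\ $m_j(\sigma_k)=0$ for $j<k$, and then $M_j^{\sigma_k}=\ev_\eta(m_j(\sigma_k))=0$ by the universality Lemma~\ref{lem:universality}.) Next I would feed this into Lemma~\ref{Faa}, which writes $S_\ell^{\sigma_k}$ as a sum of monomials $*\,\prod_{j=1}^{\ell}(M_j^{\sigma_k})^{m_j}\,\partial_t^{\,m_1+\cdots+m_\ell}$ with $m_1+2m_2+\cdots+\ell m_\ell=\ell$. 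Since $\ell\ge1$, in each such monomial some $m_j\ge1$, and then $j\le\ell\le k-1<k$, so that monomial carries a vanishing factor $M_j^{\sigma_k}$ and is therefore zero. Hence $S_\ell^{\sigma_k}=0$ for $1\le\ell\le k-1$, which is exactly the assertion that $N_{\sigma_k}$ is nilpotent of coindex $k$.

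A second, self-contained proof avoids the length estimate altogether and runs by induction on $k$, using the homomorphism properties of the matrix representation established above ($T_{\alpha\beta}=T_\alpha T_\beta$, $T_{[\alpha,\beta]}=[T_\alpha,T_\beta]$, $T_{\delta^{-1}}=T_\delta^{-1}$) together with the preceding lemma on nilpotent Toeplitz matrices (coindices add under products; $Id+N\mapsto(Id+N)^{-1}$ preserves coindex). For $k=1$ there is nothing to prove beyond $S_0^{\sigma_1}=Id$. For the step, write $\sigma_k\in L_k=[L_1,L_{k-1}]$ as a finite product of factors $[\alpha_i,\beta_i]^{\pm1}$ with $\alpha_i\in L_1$ and $\beta_i\in L_{k-1}$; by the inductive hypothesis $T_{\alpha_i}-Id$ is nilpotent of coindex $1$ and $T_{\beta_i}-Id$ of coindex $k-1$, and the identity $[A,B]-Id=\bigl[(A-Id),(B-Id)\bigr]\,A^{-1}B^{-1}$ shows $T_{[\alpha_i,\beta_i]}-Id$ has coindex $k$; multiplying the factors, and using that coindex-$k$ matrices are closed under sums while their products have coindex $2k$, gives $T_{\sigma_k}-Id$ of coindex $k$.

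I do not expect a genuine obstacle: the lemma is a bookkeeping consequence of material already in place. The one point to get right in the first proof is that every monomial produced by Lemma~\ref{Faa} does contain some factor $M_j^{\sigma_k}$ with $j\le\ell$, which is immediate from $\sum_j jm_j=\ell\ge1$; in the second proof the only care needed is the routine stability of ``coindex $k$'' under the sums, products and inverses that arise in expanding a product of group commutators. I would present the first, shorter argument.
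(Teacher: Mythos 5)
Your first argument is exactly the paper's (largely implicit) justification: the paper states the lemma without a formal proof, relying on the remark preceding the definition that $M_\ell^{\sigma_k}\equiv0$ for $\ell<k$ because $M_\ell$ is an iterated integral of length at most $\ell$, whence $S_\ell^{\sigma_k}=0$ for $\ell<k$ via the Fa\`a di Bruno expression of Lemma~\ref{Faa}, and the Toeplitz form of Lemma~\ref{lemma:Matrix-T_{delta,k}} then gives the claim. Your argument is correct (including the useful observation that the vanishing can also be derived internally from Lemma~\ref{lem:Gk small} and Lemma~\ref{lem:universality}), so nothing further is needed; the inductive second route is also sound but not what the paper does.
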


\begin{definition}
    Let $T_\delta=Id+N_\delta$ be the matrix, where $\delta\in \pi_1(F^{-1}(t_0),p_0)$, and $\pi_1(F^{-1}(t_0),p_0)$ is freely generated by $\{\delta_i\}_{i=1}^m$. 
    \begin{enumerate}
        \item[(i)] 
    We say that $N_\delta$ is $r$-\emph{order improved}
  if $S_\ell^\delta$ in \eqref{eq.matrix-T_delta} depends only on $S_j^{\delta_i}$, for $1\le j\le \ell-r+1$ and $1\le i\le m$.

\item[(ii)]
For a nilpotent matrix $N$ which is nilpotent of coindex $k$ and $k$-order improved, we say that it is \emph{$k$-triangular}. 
    \end{enumerate}

\end{definition}
\begin{example}
    Let $\sigma_2=[\delta_1,\delta_2]$, then $P^{\sigma_2}_\epsilon=Id+\epsilon^2S_2^{\sigma_2}+\cdots$,
    where $S_2^{\sigma_2}=S_1^{\delta_1}S_1^{\delta_2}-S_1^{\delta_2}S_1^{\delta_1}$.
\end{example}
\begin{lemma}\label{lemma:good}
    Let $N_{\alpha_1}$ be $k$-triangular and $T_{\alpha_2}$ be $s$-triangular, then ${N}_{\alpha_1}{N}_{\alpha_2}$ is $(k+s)$-triangular. In particular, ${N}_{\sigma_k}$, is $k$-triangular,  for $\sigma_k\in L_k$.
\end{lemma}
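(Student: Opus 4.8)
The plan is to prove the multiplicativity statement by a direct computation with the Toeplitz blocks, and then obtain the statement about $N_{\sigma_k}$ by structural induction on the commutator expression of $\sigma_k$, in the spirit of the proof of Theorem~\ref{thm: thm universal}. Two elementary closure properties will be used repeatedly: a matrix that is $r$-triangular is also $r'$-triangular whenever $r'\le r$ (both conditions defining $r$-triangularity are monotone in the index), and a finite sum of $r$-triangular matrices is $r$-triangular. Throughout, "$r$-triangular" is read as a property of a lower triangular Toeplitz matrix with block sequence $A_0,A_1,\dots$ of differential operators whose coefficients are differential polynomials in the $M_j^{\delta_i}$: namely $A_\ell=0$ for $\ell\le r-1$ and $A_\ell$ depends only on $S_j^{\delta_i}$ with $j\le\ell-r+1$; the argument below is purely matricial and never uses that the matrix is $N_\delta$ for an actual loop.

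For the product: by part~(i) of the preceding Lemma (product of a matrix nilpotent of coindex $k$ and one of coindex $s$), $N_{\alpha_1}N_{\alpha_2}$ is nilpotent of coindex $k+s$, so it remains to check it is $(k+s)$-order improved. Writing $N_{\alpha_1}$, $N_{\alpha_2}$ as Toeplitz matrices with blocks $S_i^{\alpha_1}$ (vanishing for $i<k$) and $S_j^{\alpha_2}$ (vanishing for $j<s$), the product is the Toeplitz matrix with blocks $C_\ell=\sum_{i+j=\ell}S_i^{\alpha_1}\circ S_j^{\alpha_2}$, the only surviving terms having $i\ge k$, $j\ge s$. Since $N_{\alpha_1}$ is $k$-order improved, the coefficients of $S_i^{\alpha_1}$ are differential polynomials in the $S_p^{\delta_q}$ with $p\le i-k+1$, and similarly those of $S_j^{\alpha_2}$ with $p\le j-s+1$. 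The coefficients of a composition of two differential operators are obtained from the coefficients of the factors and their $t$-derivatives by ring operations, hence the coefficients of $S_i^{\alpha_1}\circ S_j^{\alpha_2}$ are differential polynomials in the $S_p^{\delta_q}$ with $p\le\max\{i-k+1,\,j-s+1\}$. The one real computation is that for $i+j=\ell$ with $i\ge k$, $j\ge s$ one has $i-k+1\le\ell-(k+s)+1$ (equivalent to $j\ge s$) and $j-s+1\le\ell-(k+s)+1$ (equivalent to $i\ge k$), so $C_\ell$ depends only on $S_p^{\delta_q}$ with $p\le\ell-(k+s)+1$. Thus $N_{\alpha_1}N_{\alpha_2}$ is $(k+s)$-order improved, hence $(k+s)$-triangular. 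Applied with the two factors equal, this also gives that the $j$-th power of a $k$-triangular matrix is $jk$-triangular.

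For the "in particular" statement, proceed by induction on the weight of an iterated commutator word $w$ in the generators $\delta_1,\dots,\delta_m$. If $w=\delta_i$, then $N_{\delta_i}=T_{\delta_i}-\id$ has zero $0$-block and each $S_\ell^{\delta_i}$ is trivially one of the $S_p^{\delta_q}$ with $p\le\ell$, so $N_{\delta_i}$ is $1$-triangular. If $w=[w_1,w_2]$ with $\mathrm{wt}(w_1)=a$, $\mathrm{wt}(w_2)=b$, then $N_{w_1}$ is $a$-triangular and $N_{w_2}$ is $b$-triangular by induction; since the matrix representation of $\p_\epsilon^{[w_1,w_2]}$ is the group commutator of $T_{w_1}=\id+N_{w_1}$ and $T_{w_2}=\id+N_{w_2}$, the same manipulation as in Proposition~\ref{prop:proposition on commutators} expresses $N_w$, up to the (here immaterial) order of factors, as
\begin{equation*}
    N_w=\bigl(N_{w_1}N_{w_2}-N_{w_2}N_{w_1}\bigr)\,T_{w_1}^{-1}T_{w_2}^{-1}.
\end{equation*}
By the product statement, $N_{w_1}N_{w_2}$ and $N_{w_2}N_{w_1}$ are $(a+b)$-triangular, hence so is their difference; and $T_{w_1}^{-1}-\id=-N_{w_1}+N_{w_1}^2-\cdots$ is a finite sum of $(\ge a)$-triangular matrices, hence $a$-triangular, and likewise $T_{w_2}^{-1}-\id$ is $b$-triangular. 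Expanding the right-hand side into four products and applying the product statement termwise, each term is $r$-triangular for some $r\ge a+b$, hence $(a+b)$-triangular, so $N_w$ is $\mathrm{wt}(w)$-triangular. Finally, an arbitrary $\sigma_k\in L_k$ is a product of iterated commutator words and their inverses, all of weight $\ge k$; each such $w$ has $N_w$ and $N_{w^{-1}}=T_w^{-1}-\id$ that are $(\ge k)$-triangular, hence $k$-triangular, and expanding $T_{\sigma_k}=\prod(\id+N_{w_i^{\pm1}})$ exhibits $N_{\sigma_k}=T_{\sigma_k}-\id$ as a sum of $(\ge k)$-triangular matrices, hence $k$-triangular.

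The main obstacle is the bookkeeping in the order-improved bound: one must check that composing differential operators does not raise the length of the coefficients above the maximum of the lengths of the two factors, and then combine this with the arithmetic inequality $\max\{i-k+1,j-s+1\}\le\ell-(k+s)+1$; once this is secured, everything reduces to formal manipulations of lower triangular Toeplitz matrices modelled on the proofs of Proposition~\ref{prop:proposition on commutators} and Theorem~\ref{thm: thm universal}.
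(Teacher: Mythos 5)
Your proof is correct and follows essentially the same route as the paper: the product statement is obtained by multiplying the Toeplitz blocks, $\hat S_\ell=\sum_{i+j=\ell}S_i^{\alpha_1}S_j^{\alpha_2}$ with $i\ge k$, $j\ge s$, and the same arithmetic inequality $\max\{i-k+1,\,j-s+1\}\le \ell-(k+s)+1$, and the commutator/``in particular'' part is deduced by expanding $(\id+N_1)(\id+N_2)(\id+N_1')(\id+N_2')$ and applying the product statement to each term. Your write-up is in fact a bit more complete than the paper's (explicit treatment of inverses via the terminating Neumann series, and the structural induction reducing a general $\sigma_k\in L_k$ to commutator words), but the underlying argument is the same.
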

\begin{proof}

	Since ${N}_{\alpha_1}$ is $k$-order improved and ${N}_{\alpha_2}$ is $s$-order improved, then, by definition, $S_{\ell}^{\alpha_1}=0$, for $\ell=1,\dots,k-1$, and $S_{\ell}^{\alpha_2}=0$, for $\ell=1,\dots,s-1$.

Let $N_{\alpha_1}N_{\alpha_2}=(\hat{S}_\ell^{\alpha_1\alpha_2})_\ell$, then by direct multiplication,

$$
\hat{S}_\ell^{\alpha_1\alpha_2}=\sum_{r+h=\ell} S_{r}^{\alpha_1} S_{h}^{\alpha_2},
$$
with $k\le r$ and $s\le h$. Since $S_{\ell}^{\alpha_1}$ depends only on $S_j^{\alpha_i}$, with $1\le j\le \ell-k+1 $ and $S_{\ell}^{\alpha_2}$ depends only on $S_j^{\alpha_i}$, with $1\le j\le \ell-s+1 $, then $\hat{S}_\ell^{\alpha_1\alpha_2}$  depends only on $S_j^{\alpha_i}$, with $1\le j\le \ell-(k+s)+1$.

  In particular, denoting $N_1=N_{\alpha_1}$ and $N_2=N_{\alpha_2}$, and  writing $(Id+N_1)^{-1}=Id+N_1'$ and $(Id+N_2)^{-1}=Id+N_2'$,  it follows that $N_1+N_1'+N_1N_1'=N_2+N_2'+N_2N_2'=0$. The commutator $[T_1,T_2]=[Id+N_1,Id+N_2]$ is given by
    \begin{align*}
        [Id+N_1,Id+N_2]=&(Id+N_1)(Id+N_2)(Id+N_1')(Id+N_2')\\
        =&Id+N_1N_2+N_2N_1'+N_1N_2N_1'+N_1N_2N_2'+&\\
        +&N_2N_1'N_2'+N_1N_2N_1'N_2'.
    \end{align*}

    From Lemma \ref{lemma:good}, it follows that the products $N_1$, $N_1'$, $N_2$, $N_2'$ are $(k+s)$-triangular. 
    In particular, for $\sigma_k$ in $\OO_k$, we have a product of $k$ matrices which are 1-triangular. This gives a $k$-triangular matrix. 

\end{proof}

\begin{proof}[Proof of Theorem \ref{Theo.M-sigma_k}]
 By Lemma \ref{lemma:good}, we have that $S_\ell^{\sigma_k}$ depends only on $S_j^{\delta_i}$, with $1\le i\le m$ and $1\le j\le \ell-(k-1) $. On the other hand, by Corollary \ref{Prop.S_ell=M_ell}, it follows that $S_\ell^{\sigma_k}(\id)=M_\ell^{\sigma_k}$ and $S_j^{\delta_i}(\id)=M_j^{\delta_i}$, thus completing the proof.

\end{proof}

\subsection{Relation with the first proof} \label{sub7.3}
 Let $f\in\mathfrak{g}$. We define $F:=\id+f=\id+\epsilon f_1+\epsilon^2f_2+\cdots$ an element in $\mathcal{G}$. Then we have a differential operator $\mathbb{F}=S_0+\epsilon S_1+\epsilon^2S_2+\cdots$, as it was defined in Section \ref{sect.Diff.Op.Hol}. Then, $\mathbb{F}$ is associated to a Toeplitz matrix $T_F$, given by
    $$T_F=\begin{pmatrix}
    S_0&0&0&\cdots\\
    S_1&S_0&0&\cdots\\
    S_2&S_1&S_0&\cdots\\
    \vdots&\vdots&\vdots&\ddots 
\end{pmatrix},$$
where $S_\ell=\frac{\partial^\ell F}{\partial\epsilon^\ell}_{|\epsilon=0}$, which, by Fa\`a di Bruno (Lemma \ref{Faa}), depends on $f_1,\dots,f_\ell$. This association defines an isomorphism between $\mathfrak{g}$ and the space of lower triangular Toeplitz matrices $\mathcal{T}$, that is 
\begin{equation}\label{eq:iso}  
\Xi:\mathfrak{g}\rightarrow \mathcal{T}, \, \, f\mapsto T_F.
\end{equation}

\begin{proposition}\label{prop:bridge}
The triangularity index of $f$ is $\tau(f)=k$, if and only if, the matrix $T_F$ verifies $T_F-Id$ is $k$-triangular.    
\end{proposition}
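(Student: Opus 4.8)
The plan is to read off both the nilpotency coindex and the ``order improvement'' of $T_F-\id$ directly from the Fa\`a di Bruno formula for the operators $S_\ell$, and to translate the clause ``$S_\ell$ depends only on $S^{\delta_i}_{j'}$ with $j'\le r$'' into ``every coefficient of $S_\ell$ lies in $\A_r$''. Write $f=\sum_{j\ge1}\ep^j f_j$ with $f_j\in\mathfrak m$, so that $T_F$ is the Toeplitz matrix with entries $S_0=\id,S_1,S_2,\dots$, where $S_\ell=\tfrac1{\ell!}\partial_\ep^\ell F|_{\ep=0}$. By Lemma~\ref{Faa}, for $\ell\ge1$ the operator $S_\ell$ has positive order and its coefficients are differential polynomials, with universal numerical coefficients, in $f_1,\dots,f_\ell$ and their $t$-derivatives, every monomial of which involves at least one factor $f_j$; moreover, by (the universal reading of) Corollary~\ref{Prop.S_ell=M_ell}, $S_\ell(\id)=f_\ell$, and inspecting Lemma~\ref{Faa} this $f_\ell$ is precisely the coefficient of $\partial_t$ in $S_\ell$ up to a nonzero scalar.

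First I record the dictionary. For $r\ge1$ the coefficients of the family $\{S^{\delta_i}_{j'}:1\le i\le m,\ 1\le j'\le r\}$ generate $\A_r$ as a differential algebra: $S^{\delta_i}_1$ contributes $m_1(\delta_i)$, and inductively $S^{\delta_i}_{j'}$ contributes $m_{j'}(\delta_i)$ modulo differential polynomials in $m_1(\delta_i),\dots,m_{j'-1}(\delta_i)$ already obtained. Hence, for any operator $S$ produced by Fa\`a di Bruno, the statement ``$S$ depends only on $S^{\delta_i}_{j'}$ with $j'\le r$'' is equivalent to ``every coefficient of $S$ lies in $\A_r$''. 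With this, the assertion that $T_F-\id$ is $k$-triangular unwinds to two conditions: (a) $S_\ell=0$ for $1\le\ell\le k-1$ (nilpotent of coindex $k$); and (b) every coefficient of $S_\ell$ lies in $\A_{\ell-k+1}$ for all $\ell\ge1$ ($k$-order improved).

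Now the two implications. Assume $\de{f}\ge k$, i.e.\ $\val{f_j}\le j-k+1$ for every $j$. For $j\le k-1$ this forces $f_j=0$, since $\val{\phi}\ge1$ for $\phi\ne0$ by Remark~\ref{rem:positive length}; thus for $1\le\ell\le k-1$ every monomial of $S_\ell$ contains a factor $f_j$ with $j\le\ell\le k-1$, so $S_\ell=0$, giving (a). For arbitrary $\ell\ge1$, each $f_j$ with $j\le\ell$ lies in $\A_{j-k+1}\subseteq\A_{\ell-k+1}$, and since $\A_{\ell-k+1}$ is a differential subalgebra all coefficients of $S_\ell$ lie in $\A_{\ell-k+1}$, giving (b). Hence $T_F-\id$ is $k$-triangular. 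Conversely, if $T_F-\id$ is $k$-triangular then $S_\ell=0$ for $\ell<k$ yields $f_\ell=S_\ell(\id)=0$, while for $\ell\ge k$ the $\partial_t$-coefficient of $S_\ell$ lies in $\A_{\ell-k+1}$ by (b), and this coefficient equals $f_\ell$ up to a nonzero scalar, so $f_\ell\in\A_{\ell-k+1}$, i.e.\ $\val{f_\ell}\le\ell-k+1$. In either case $\ell-\val{f_\ell}+1\ge k$, whence $\de{f}=\inf_{\ell}(\ell-\val{f_\ell}+1)\ge k$. Therefore $\de{f}\ge k$ if and only if $T_F-\id$ is $k$-triangular; applying this to $k$ and to $k+1$ gives $\de{f}=k$ if and only if $T_F-\id$ is $k$-triangular but not $(k+1)$-triangular, which is the statement of the Proposition (the equality on the matrix side being read as the sharp triangularity index). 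The only real obstacle is bookkeeping: making the dictionary between ``$S_\ell$ depends only on $S^{\delta_i}_{j'}$'' and ``coefficients in $\A_r$'' precise, and extracting $f_\ell$ as the $\partial_t$-coefficient of $S_\ell$ from Lemma~\ref{Faa} in both directions; the rest is the monotonicity of $\de{\cdot}$ and $\val{\cdot}$ established in Section~\ref{subsec: universal holonomy}.
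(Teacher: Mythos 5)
Your proof is correct and follows essentially the same route as the paper's: both directions reduce to the dictionary between ``$S_\ell$ depends only on $S_{j}^{\delta_i}$ with $j\le \ell-k+1$'' and $\val{f_\ell}\le \ell-k+1$, i.e.\ to unwinding the definitions of $\tau$ and of $k$-triangularity. You merely make the paper's terse argument explicit, via Lemma~\ref{Faa} and the identification of $f_\ell$ with the $\partial_t$-coefficient of $S_\ell$, and you handle the $\ge k$ versus $=k$ reading more carefully by applying the equivalence at $k$ and $k+1$.
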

\begin{proof}
   
Assume that $T_F$ is $k$-triangular. It means that $$T_F=\begin{pmatrix}
    S_0&0&0&0&\cdots\\
    0&S_0&0&0&\cdots\\
    \vdots&\ddots&\ddots&\vdots&\cdots\\
    S_k&0&0&S_0&\cdots\\
    \vdots&\ddots&\ddots&\ddots&\ddots 
\end{pmatrix},$$ and, for $k\le\ell$, $S_\ell$ depends on $S_j^{\delta_i}$, with $1\le j\le \ell-k+1$ and $1\le i\le m $, only.
On the other hand, $\lambda(f)=\min_r\{f_\ell\in \mathcal{A}_r\}$. That is, $\lambda(f)=\ell-k+1$.
Thus,
$$\tau(f)=\inf_\ell\{\ell-\lambda(f_\ell)+1\}=\inf_\ell\{\ell-\ell+k-1+1\}=k.$$
Now we prove the other direction. Assume that $\tau(f)=k$, that is $k=\inf_\ell\{\ell-\lambda(f_\ell)+1\}$. Thus, $k\le \ell-\lambda(f_\ell)+1$, therefore $1\leq\lambda(f_\ell)\le \ell-k+1$, for all $\ell$, and $k\le\ell$, which means that $T_F-Id$ is $k$-triangular.
\end{proof}

\section{Examples}\label{Sec:Examples}

In this Section we compute the universal Noetherianity index  $n_{\scriptscriptstyle H,\gamma}(k)$ given in \eqref{eq:n} for Examples \ref{exm:1} to \ref{exm:square}. Moreover, for $k=1$, we show that the Noetherianity index $\nu_{\scriptscriptstyle H,\gamma}(k)$, given in Definition \ref{def:noether} coincides with  $n_{\scriptscriptstyle H,\gamma}(k)$.
To achieve it,  we compute the Noetherianity index $n_r$ defined in Definition \ref{def:n_r}, for any $r\in\N$, thus proving that $n_r=\nu_r$, for $r=1$, (see \eqref{eq:nu_r}).

\begin{example}\label{exm:1}
     Let $H\in\C[x,y]$ be a generic polynomial and $\gamma$ a loop, non-trivial in homology and let  $\eta$ be a polynomial 1-form. We consider the deformation \eqref{eq:deformation}. We claim that, for a given $k$, the number $n_{\scriptscriptstyle H,\gamma}(k)$ is equal to $k$.\\
     
     We first show that $n_1=1$. 
    Indeed, if $M_1^\gamma=\int_\gamma\eta\equiv 0$, then, since the orbit under monodromy of $\gamma$ generates the whole homology group, we have that there exist polynomials $g$ and $R$,  such that $\eta=gdH+dR$ \cite{Ily69}. Then, by Fran\c coise algorithm \cite{F}, it follows that $M_2^\gamma=\int_\gamma g\eta$, which is an Abelian integral. 
    
    Moreover, by the Fran\c coise-Pelletier algorithm  \cite{FP}, $M_3^\gamma$ is given by $\int_\gamma (\eta'\eta)'\eta$, where $\eta'$ is the Gelfand-Leray derivative of $\eta$, with respect to $dH$, plus iterated integrals of smaller length. Since $\eta'=dg$, then $\int_\gamma (\eta'\eta)'\eta=\int_\gamma (dg\eta)'\eta$, which is of length at most 2. Therefore $M_3^{\sigma_3}$ vanishes. The same argument holds for any $M_j^{\sigma_j}$, $j>3$, thus showing that $n_1=1$. As $\nu_1\leq n_1=1$, it follows that $\nu_1=1$, too.

    Now, we show that $n_2=2$. To do this, suppose that $M_2^{\gamma}\equiv 0$. On the other hand, from the condition above, we know that $M_2^{\gamma}(t)=\int_{\gamma}g\eta$. Denote $\eta_1=g\eta$, then $M_2^\gamma=\int_\gamma\eta_1$. Repeating the above argument, $\eta_1=g_1dH+dR_1$, so $M_3^\gamma=\int_\gamma g_1\eta_1$. Hence, $M_3^\gamma$ is an Abelian integral. Again, by Fran\c coise-Pelletier, $M_4^\gamma$ is given by $\int_\gamma \eta_1'\eta_1$, plus an Abelian integral depending on $\eta_1$. Therefore, $M_4^\gamma$ is an iterated integral of length at most 2. 
    
    The same argument shows that, for any $j$, $M_j^\gamma$ is an iterated integral of length at most $j-2$, so $M_j^{\sigma_j}$ vanishes. Thus, $n_2=2$.
    Moreover, $2\leq\nu_2\leq n_2=2$, gives $\nu_2=2$, too.

     Analogously, it follows that, for any $r$, $n_r=\nu_r=r$. Hence, $$n=\max\{n_r\colon 1\le r\le k\}=\max\{r\colon 1\le r\le k\}=k,$$
     which is the universal Noetherianity index $n_{\scriptscriptstyle H,\gamma}(k)$ given in \eqref{eq:n}, coinciding with the Noetherianity index $\nu_{\scriptscriptstyle H,\gamma}(k)$.

     Note that, in this case the orbit generates the whole homology of the regular fiber, and hence the orbit length of iterated integrals coincides with the usual length.    
    \end{example}

\begin{example}[Hamiltonian triangle]\label{exm.Triangle}
 Let $H\in\C[x,y]$ be a non generic polynomial given by the product of three lines in general position. Without loss of generality, we assume that $H$ is given by $H=xy(x+y-1)$ and let $\gamma$ be the vanishing cycle at the center singular point.
 
 Let $\eta$ be a polynomial 1-form. We consider the deformation \eqref{eq:deformation}. 
  \begin{figure}[h]
         \centering
         \includegraphics[width=0.35\linewidth]{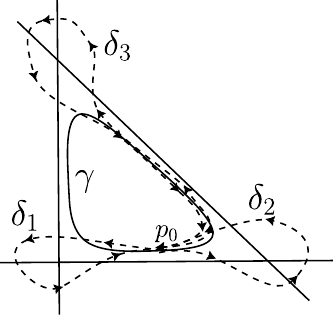}
         \caption{Hamiltonian triangle.}
         \label{fig:placeholder}
     \end{figure}
 
 We first show that $n_1=2$. 
 
 Note that the orbit $\OO$, under monodromy of $\gamma$,  is given by $\OO=\OO_1\oplus\OO_2\oplus\OO_3\oplus\cdots$. Here the vector spaces are: $\OO_1=(\gamma, \delta_1\delta_2\delta_3)$, $\OO_j=L_j$, with $j\ge 2$, where $\delta_1, \delta_2, \delta_3$ are the vanishing cycles at the critical points $(0,0), (1,0), (0,1)$, respectively, and with intersection number $<\gamma,\delta_j>=1$, for each $j=1,2,3$.
 We study  first the condition $M_1^\gamma=0$, that is $\int_\gamma\eta=0$. By \cite{U1}, it implies  
 \begin{equation}\label{eq:eta-triangle}
     \eta=gdH+dR+p_1(H)\frac{dx}{x}+p_2(H)\frac{dy}{y},
 \end{equation}
 where $g,R\in\C[x,y]$ and $p_1,p_2\in\C[H]$.
 
 To compute $M_2^{\sigma_2}$, for $\sigma_2=[\delta_1,\delta_2]$, we observe that it is defined by $M_2^{\sigma_2}(t)=W(\int_{\delta_1}\eta,\int_{\delta_2}\eta)$, where $W$ denotes the Wronskian \cite{MNOP2}.  Note that,
 $\int_{\delta_1}\eta=2\pi i(p_1-p_2)$, $\int_{\delta_2}\eta=2\pi ip_2$. Then 
 
 \begin{equation}\label{eq:M2}
     \begin{aligned}
    W\left(\int_{\delta_1}\eta,\int_{\delta_2}\eta\right)=&W\left(2\pi i(p_1-p_2), 2\pi i p_2\right)\\
    =&(2\pi i)^2\left((p_1-p_2)p_2'-(p_1-p_2)'p_2\right)\\
    =&-4\pi^2W(p_1,p_2).
 \end{aligned}
 \end{equation}
This is identically zero, if and only if,  $p_1$ and $p_2$ are proportional.

Then, $n_1$ cannot be equal to 1, as $p_1$ and $p_2$ can be chosen arbitrarily, in particular they can be non-proportional, and therefore $M_2^{\sigma_2}\not\equiv0$. Hence, a second condition requiring the vanishing of \eqref{eq:M2} is necessary. 

Since $M_j^{\sigma_j}$ is also given in terms of the Wronskians of $p_1$ and $p_2$, then it follows that $M_j^{\sigma_j}\equiv0$, for all $j\ge 3$. Therefore, $n_1=2$. 
Moreover, by \eqref{eq:eta-triangle}, any couple of polynomials $(p_1,p_2)$ can be realized by a convenient form $\eta$. Hence $\nu_1=n_1=2$.

We now verify that $n_2$ is equal to $3$.
Note that $M_2^{\sigma_2}\equiv 0$, for all $\sigma_2\in\OO_2=L_2$, means that $M_2^{\gamma}$ is an Abelian integral. That is, there exists a rational $1$-form $\tilde\eta$, such that, $M_2^{\gamma}=\int_{\gamma}\eta'\eta=\int_\gamma\tilde\eta$. More explicitly, it can be verified that 
\begin{equation}\label{eq:eta-tilde-1}
    \tilde\eta=gdR-p_1(t)gd\varphi+p'_1(t)Rd\varphi,
\end{equation}
where $\varphi=\log(xy^c)$, with $c\in\C$. Then, the second diagonal correspond to the deformation $dH+\epsilon\tilde\eta=0$, it means that in the second diagonal we have the same differential algebra $\A_1$. Hence, by expression \eqref{eq:D_r-n_r} the universal Noetherianity index $n_2$ is equal to $3$ (the ascending chain of radical ideals stabilizes after the second condition).

Repeating the same argument, we can conclude that $n_r=r+1$, for any $r\in\N$. Hence, for any $k\in\N$,
$$n_{\scriptscriptstyle H,\gamma}(k)=\max\{n_r\colon 1\le r\le k\}=\max\{r+1\colon 1\le r\le k\}=k+1.$$ 
\end{example}
\begin{remark}
In this triangle case, we conjecture that $\nu_r=n_r$, for $r\ge 2$, and hence that the optimal bound is $\nu_{H,\gamma}(k)=n_{H,\gamma}(k)=k+1$, for any $k\in\N$.

In order to prove that $\nu_2=n_2$ one has to construct a form $\eta$ such that $\ev_\eta(C_1)=0$, with the additional condition $\ev_\eta(m_{21}(\gamma))=0$, but $\ev_\eta(m_{32}(\sigma_2))\ne0$. One of the main difficulties is that already on the level of the second diagonal the cohomology class of $\eta$ is not sufficient to express the Melnikov functions. Even if we work with abelian integrals on the second diagonal, the Melnikov function $\ev_{\tilde\eta}(m_{21}(\gamma))=\int_{\gamma}\tilde\eta$, depends on the $1$-form $\tilde\eta$, which is not free, but depends on the form $\eta$. Moreover, it does not depend only on the coholomology class of $\eta$, but also on the relative exact part of $\eta$. It is not clear that the left-hand side of the relation $\ev_{\tilde\eta}(m_{32}(\sigma_2))\ne0$ is not always zero for the forms $\tilde\eta$ induced by $\eta$. Of course, passing to further diagonals in general we have to work with the full homotopy and not homology and the calculations become more and more complicated. 

In the following examples, we also conjecture that the Noetherianity index $\nu_{H,\gamma}(k)$ giving the optimal bound coincides with the universal Noetherinity index   $n_{H,\gamma}(k)$, but the same difficulties in proving the conjecture apply as in the triangle case.  
    
\end{remark}

\begin{example}[Product of lines in general position]\label{exm:product-lines}
Let $H=f_1\cdots f_d$ be a product of polynomials $f_j$, with $\deg (f_j)=1$ and let $\ell_j=\{f_j=0\}$ be  lines in general position.
We assume, moreover, that each critical value of $H$, except $0$, corresponds to only one critical point (which is of center type).
Let $\gamma_i$ be the cycles vanishing at a center critical points, and let $\delta_i$ be the cycles vanishing at saddle points, we choose orientations so that the intersection index $(\gamma_i,\delta_j)$ is 1, if they intersect, or 0 otherwise \cite{Acampo}. 
We label the cycles $\delta_i$, so that $\delta_i$, $i=1,\dots, d-1$, are the cycles vanishing at the intersection point between the lines $\ell_i$ and $\ell_d$. Moreover, we suppose that the lines $\ell_i$ are ordered by the order of their intersection points with $\ell_d$. We consider a deformation $dH+\epsilon\eta=0$ and let $\gamma$ be a cycle $\gamma_i$. 
\begin{figure}[h]
    \centering
    \includegraphics[width=0.7\linewidth]{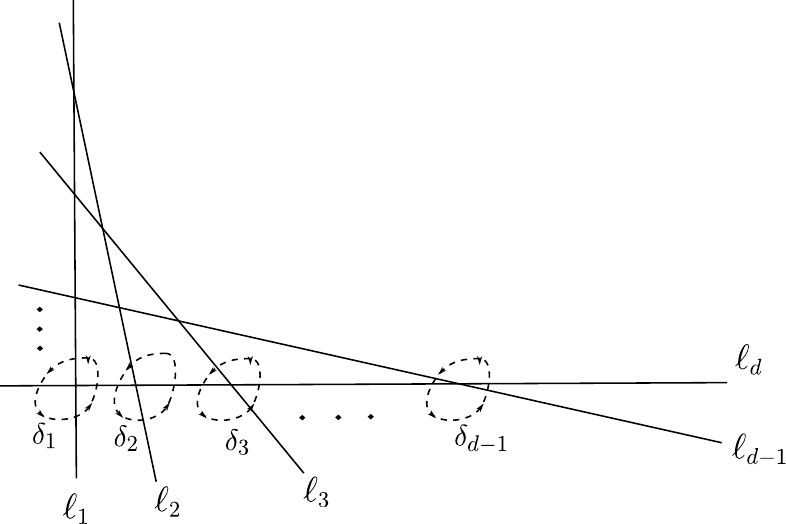}
    \caption{Lines in general position.}
    \label{fig:placeholder}
\end{figure}

Now we compute $n_1$ and $\nu_1$. First condition, $M_1^\gamma\equiv 0$ gives $\int_\gamma\eta\equiv0$. By \cite{MU}, it is known that $L_2=\OO_2$ and that
\begin{equation}\label{eq:U}
\eta=gdH+dR+p_1(H)\frac{df_1}{f_1}+\dots+p_{d-1}(H)\frac{df_{d-1}}{f_{d-1}}.
\end{equation}
Let $\sigma_2=[\delta_i,\delta_j]\in\OO_2,$ with $1\le i ,j\le d-1$ and $i-j$ odd. Then, second condition $M_2^{\sigma_2}=0$ gives 

$$\begin{aligned}
    M_2^{\sigma_2}&=W(\int_{\delta_i}\eta,\int_{\delta_j}\eta)\\
    &=W\left(\int_{\delta_i}p_1(H)\frac{df_1}{f_1}+\dots+p_{d-1}(H)\frac{df_{d-1}}{f_{d-1}},\int_{\delta_j}p_1(H)\frac{df_1}{f_1}+\dots+p_{d-1}(H)\frac{df_{d-1}}{f_{d-1}}\right)\\
    &=\pm W( 2\pi ip_i(t), 2\pi ip_j(t))\\
    &=\pm(2\pi i)^2W(p_i(t),p_j(t)),
\end{aligned}$$
    which vanishes if and only if $p_i$ and $p_j$ are proportional, giving $n_1\geq 2$. On the other hand, note that $M_j^{\sigma_j}$ are given by nested Wronskians, including $W(p_i(t),p_j(t))$, which vanishes. Hence, $M_j^{\sigma_j}\equiv0$, for all $j\ge3$, showing that $n_1=2$. Moreover, from \eqref{eq:U},
    any set of polynomials $p_i$, can be realized by a convenient form $\eta$, thus giving that $\nu_1=2$, too.
    
    Now we show that $n_r=r+1$, for all $r\ge2$, analogously to the Triangle case, Example \ref{exm.Triangle}.  
    
    Hence, by the definition \eqref{eq:n} we have that,
    $$n_{\scriptscriptstyle H,\gamma}(k)=\max\{r+1\colon 1\le r\le k\}=\max\{r+1\colon 1\le r\le k\}=k+1.$$ 
\end{example}

\begin{example}\label{exm:square}

Let $H=(x^2-1)(y^2-1)$, $f_1=x+1, f_2=x-1, f_3=y+1$, $f_4=y-1$, $\eta_i=d\log f_i$. Clearly $\eta_i'=0$ (Gelfand-Leray derivatives). 
Let $\gamma$ denote the real loop of $\{F=t\}$ vanishing at the origin and $
\delta_i$, $i=1,\ldots,4$ the loops vanishing at the saddles $(\pm1, \pm1)$ starting from $(-1,-1)$ counterclockwise such that $\int_{\delta_1\delta_2}\eta_3=0$, $\int_{\delta_2\delta_3}\eta_2=0$, $\int_{\delta_1}\eta_1=2\pi i$. The fundamental group $\pi_1(\{H=t\},p_0)$ is freely generated by $\gamma$ and $\delta_i$, $i=1,\ldots,4$.
 
\begin{figure}[h]
         \centering
         \includegraphics[width=0.35\linewidth]{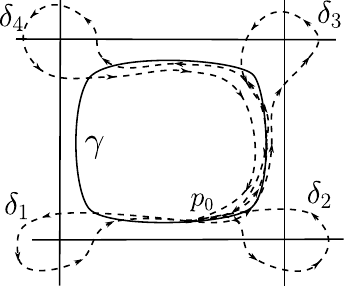}
         \caption{Hamiltonian Square.}
         \label{fig:placeholder}
     \end{figure}

We give orientation to the cycles such that $(\gamma,\delta_j)=1$ and
\begin{equation}\label{eq:basic integrals square nonzero}
\int_{\delta_1}\eta_1=-\int_{\delta_1}\eta_3=
\int_{\delta_2}\eta_3=-\int_{\delta_2}\eta_2=\int_{\delta_3}\eta_2=
2\pi i,
\end{equation}
while 
\begin{equation}\label{eq:basic integrals square zero}
   \int_{\delta_2}\eta_1 =\int_{\delta_3}\eta_1=\int_{\delta_1}\eta_2=\int_{\delta_3}\eta_3=0.
\end{equation}

Thus, the orbit $\OO\subset \pi_1$
is generated by 
\begin{equation}\label{eq:generators of orbit}
\gamma, \delta_1\delta_2\delta_3\delta_4\,, v_2=[\delta_1\delta_2,\delta_2\delta_3], ...,v_i=[\delta_1\delta_2,[\delta_2,[\dots,[\delta_2,\delta_2\delta_3]\dots],...
\end{equation}
and $[\OO, \pi_1]$.

 Fix a perturbation form $\eta$, and consider the perturbation $dH+\epsilon \eta=0$. We show that $n_1=3$.
 
First, note that the condition $M_1^{\gamma}=0$
implies that \begin{equation}\label{eq.eta-P_j}
    \eta=gdH+dR+p_1(H)\eta_1+p_2(H)\eta_2+p_3(H)\eta_3,
\end{equation}
with $g,R\in\C[x,y]$ and $p_1,p_2,p_3\in\C[H]$, because $\eta_1,\eta_2,\eta_3$ generate the subspace of $H_1(H=t)$ orthogonal to $\OO$
(see Section 3 of \cite{MNOP2}).

Now, in order to compute $M_2^{v_2}$, note that,
\begin{equation}\label{eq:etas-deltas}
    \int_{\delta_1\delta_2}\eta=2\pi i (p_1-p_2),\ \int_{\delta_2\delta_3}\eta=2\pi ip_3,\ \int_{\delta_2}\eta=2\pi i (p_3-p_2).
\end{equation}

From Fran\c coise algorithm, it is known that $M_2^{v_2}=\int_{v_2}\eta'\eta$, where $\eta'$ denotes the Gelfand--Leray derivative of $\eta$, $\eta'=\frac{d\eta}{dH}$. Thus, using equations \eqref{eq:etas-deltas}, the condition $M_2^{v_2}\equiv0$ means that
$$
\begin{aligned}
    M_2^{v_2}&=\int_{[\delta_1\delta_2,\delta_2\delta_3]}\eta'\eta\\
    &=W\left(\int_{\delta_1\delta_2}\eta,\int_{\delta_2\delta_3}\eta\right)\\
    &=(2\pi i )^2W(p_1-p_2,p_3).    
\end{aligned}$$
Therefore, $M_2^{v_2}\equiv 0$ means that there exists $\mu\in\C$, such that $p_3=\mu(p_1-p_2)$, or $p_1-p_2=\mu p_3$. Assume  the first case, $p_3=\mu(p_1-p_2)$. Now we compute $M_3^{v_3}$, which is given by 
\begin{equation}
    \begin{aligned}
     M_3^{v_3}&=\int_{[\delta_1\delta_2,[\delta_2,\delta_2\delta_3]]}(\eta'\eta)'\eta\\
     &=W\left(\int_{\delta_1\delta_2}\eta,W\left(\int_{\delta_2}\eta,\int_{\delta_2\delta_3}\eta\right)\right)\\
     &=(2\pi i)^3W(p_1-p_2,W(p_3-p_2,p_3))
    \end{aligned}.
\end{equation}
Substituting $p_3=\mu(p_1-p_2)$, we get

\begin{equation}
    \begin{aligned}
     M_3^{v_3}&=(2\pi i)^3W(p_1-p_2,W(\mu(p_1-p_2)-p_2,\mu(p_1-p_2)))\\
     &=\mu(2\pi i)^3W(p_1-p_2,W(\mu(p_1-p_2)-p_2,(p_1-p_2)))\\
     &=\mu(2\pi i)^3W(p_1-p_2,W(-p_2,(p_1-p_2)))\\
     &=-\mu(2\pi i)^3W(p_1-p_2,W(p_2,p_1)).\\
    \end{aligned}
\end{equation}
Hence, $M_3^{v_3}\equiv0$ gives the condition $W(p_1-p_2,W(p_2,p_1))\equiv 0$.

Now, we compute $M_4^{v_4}$. It is given by 
\begin{equation*}
    \begin{aligned}
     M_4^{v_4}&=\int_{[\delta_1\delta_2,[\delta_2,[\delta_2,\delta_2\delta_3]]]}((\eta'\eta)'\eta)'\eta\\
     &=W\left(\int_{\delta_1\delta_2}\eta,W\left(\int_{\delta_2}\eta,W\left(\int_{\delta_2}\eta,\int_{\delta_2\delta_3}\eta\right)\right)\right)\\
    &=(2\pi i)^4W(p_1-p_2,W(p_3-p_2,W(p_3-p_2,p_3)))\\
    &=(2\pi i)^4W(p_1-p_2,W(\mu(p_1-p_2)-p_2,W(\mu(p_1-p_2)-p_2,\mu(p_1-p_2))))).
    \end{aligned}
\end{equation*}
Note that the Wronskian $W(\mu(p_1-p_2)-p_2,W(\mu(p_1-p_2)-p_2,\mu(p_1-p_2))))$ is either zero, or a multiple of the Wronskian $W(p_1,p_2)$. Since, $W(p_1-p_2,W(p_2,p_1))\equiv 0$, then $M_4^{v_4}\equiv 0$. The same argument shows that, for any $j\ge5$, $M_j^{v_j}\equiv 0$ vanishes as well. Hence, $n_1=3$.
We have $\nu_1=3$, because by \eqref{eq.eta-P_j}, any set of polynomials $p_i$, $i=1,2,3$, can be realized by a convenient form $\eta$.

We now calculate $n_2$. We assume that all the elements on the first diagonal in \eqref{table-M_i} vanish and that $M_2^\gamma\equiv0$.
From the vanishing on the first diagonal we get relation \eqref{eq.eta-P_j}, with $p_3$ and $w(p_1,p_2)$ both proportional to $p_1-p_2$. We consider the case $p_3=\mu(p_1-p_2)$. Then, 

$$\eta=(g-\sum_{i=1}^3 p_i'(H)\log(f_i))dH+d(R+\sum_{i=1}^3p_i(H)\log(f_i)).$$
Thus,

$$\begin{aligned}
    M_2^\gamma=\int_\gamma \eta'\eta&=\int_\gamma (g-\sum_{i=1}^3 p_i'(t)\log(f_i))\eta\\
    &=\int_{\gamma}g\eta-\sum_i p_i'(t)\int_\gamma\log(f_i)\eta\\
    &=\int_{\gamma}g\eta-\sum_i p_i'(t)\int_\gamma\log(f_i)d(R+\sum_{j=1}^3p_j(t)\log(f_j))\\
    &=\int_{\gamma}g\eta-\sum_i p_i'(t)\int_\gamma\log(f_i)dR-\sum_i p_i'(t)\sum_{j=1}^3p_j(t)\int_{\gamma}\log(f_i)d\log(f_j)\\
    &=\int_{\gamma}g\eta-\sum_i p_i'(t)\int_\gamma\log(f_i)dR-\sum_{i<j}W(p_i,p_j)\int_{\gamma}\log(f_i)d\log(f_j).\\
\end{aligned}
    $$ 
    Note that $\int_{\gamma}g\eta-\sum_i p_i'(t)\int_\gamma\log(f_i)dR$ is an Abelian integral, since $\int_\gamma\log(f_i)dR=\int_\gamma R\frac{df_i}{f_i}$. On the other hand,
    $$\begin{aligned}
\sum_{i<j}W(p_i,p_j)\int_{\gamma}\log(f_i)d\log(f_j))=&W(p_1,p_2)\int_{\gamma}\log(f_1)d\log(f_2))+W(p_2,p_3)\int_{\gamma}\log(f_2)d\log(f_3)\\
&+W(p_1,p_3)\int_{\gamma}\log(f_1)d\log(f_3))\\
=&W(p_2,\mu(p_1-p_2))\int_{\gamma}\log(f_2)d\log(f_3))\\
&+W(p_1,\mu(p_1-p_2))\int_{\gamma}\log(f_1)d\log(f_3)\\
=&\mu(W(p_1,p_2))\int_\gamma(\log(f_2)d\log(f_3))+\log(f_1)d\log(f_3))\\
=&\mu(W(p_1,p_2))\int_\gamma(\log(f_2f_1)d\log(f_3))\\
=&\mu(W(p_1,p_2))\int_\gamma(\log(\frac{F}{f_3f_4})d\log(f_3))\\
=&-\mu(W(p_1,p_2))\int_\gamma(\log(f_3f_4)d\log(f_3))\\
=&-\mu(W(p_1,p_2))\int_\gamma(\log(y^2-1)d\log(y+1)).
\end{aligned}$$
    
Then by the Cauchy Theorem, since $\log(y^2-1)d\log(y+1)$ is holomorphic in a neighborhood of $\gamma$, we get that $\int_\gamma(\log(y^2-1)d\log(y+1))\equiv0$. Therefore, we conclude that $M_2^{\gamma}=\int_\gamma\tilde{\eta}$, where 
\begin{equation}\label{eq:etatilde}
\tilde{\eta}=g\eta+ R(x,y)\left(\sum_{j=1}^3 p_j'(H)\frac{df_j}{f_j}\right),
\end{equation}
so it is an abelian integral and its
vanishing implies that the form $\tilde \eta$ is also of the form \eqref{eq.eta-P_j}. 
Considering the deformation $dH+\epsilon\tilde\eta$, we are exactly in the same situation as in the study of $n_1$, but with a shift in the index. 
It hence follows that $n_2=4$ as above.

The same argument applies for any $n_r$, for $r\geq2$.
We conclude that $n_r=r+2$, for $r\geq1$, and hence 
\begin{equation*}
   n_{\scriptscriptstyle H,\gamma}(k)= k+2\,.
\end{equation*}

\end{example}

\begin{remark}\label{rem:orbit-length-vs-usual-length}
    We stress that in this case, the orbit length and the usual length do not coincide. For instance, once we have proved that the first diagonal vanishes, then $M_2^\gamma$ is an iterated integral of orbit length $1$. However, for the commutator $[\delta_1,\delta_2]$, which does not belong to the orbit of $\gamma$,  we have $M_2^{[\delta_1,\delta_2]}\neq 0$. Hence, $M_2^{[\delta_1,\delta_2]}$ is not an iterated integral of length 1.
    \end{remark}
     \begin{corollary}\hfill
        \begin{itemize}
    
        \item[(i)]   for $H$ generic, $n_{\scriptscriptstyle H,\gamma}(k)=k$,
        \item[(ii)]  for $H$ a product of lines in general position, $n_{\scriptscriptstyle H,\gamma}(k)=k+1$,
        \item[(iii)]  for $H$ a product of two pairs of parallel lines, $n_{\scriptscriptstyle H,\gamma}(k)=k+2$.
    \end{itemize}
 \end{corollary}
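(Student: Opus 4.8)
The plan is to read each of the three cases directly off the corresponding worked example above, the only extra ingredient being invariance of the whole construction under affine automorphisms of $\C^2$, which lets one replace an arbitrary Hamiltonian in each class by a normalized representative. For (i) I would invoke Example~\ref{exm:1}: for a generic $H$ and a loop $\gamma$ nontrivial in homology the monodromy orbit of $\gamma$ spans $H_1(\{H=t\})$, so the orbit length coincides with the usual length, and the Fran\c coise and Fran\c coise--Pelletier analysis carried out there gives $n_r=\nu_r=r$ for every $r\ge 1$; substituting into~\eqref{eq:n} yields $n_{H,\gamma}(k)=\max\{r:1\le r\le k\}=k$. For (ii) I would invoke Example~\ref{exm:product-lines}: under the genericity hypotheses there (each nonzero critical value carrying a single center-type critical point), the description of $M_j^{\sigma_j}$ as nested Wronskians of the polynomials $p_i$ appearing in~\eqref{eq:U} gives $n_1=\nu_1=2$ and $n_r=r+1$ for $r\ge 2$, so~\eqref{eq:n} gives $n_{H,\gamma}(k)=k+1$. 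For (iii) the essential computation is Example~\ref{exm:square}, where $H=(x^2-1)(y^2-1)$ is a product of two pairs of parallel lines and where it is shown that $n_1=\nu_1=3$ and $n_r=r+2$ for every $r\ge 1$, hence $n_{H,\gamma}(k)=k+2$.

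To pass from these normalized Hamiltonians to arbitrary members of the respective classes, I would observe that an affine automorphism $A$ of $\C^2$ conjugates the foliation $dH+\epsilon\eta=0$ to $d(H\circ A^{-1})+\epsilon\,(A^{-1})^{*}\eta=0$, carries $\gamma$ and the filtration $\OO\cap L_i$ to the corresponding objects for $H\circ A^{-1}$, and preserves iterated integrals together with their orbit lengths and the ideals $D_{r,s}$; hence all the $n_r$, and therefore $n_{H,\gamma}(k)$, are unchanged. Any product of two pairs of parallel lines is sent by a suitable $A$ to $(x^2-1)(y^2-1)$ (send the four lines to $x=\pm 1$ and $y=\pm 1$), and likewise the generic and general-position cases reduce to their normalized forms, which completes the argument modulo the examples.

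The point to be careful about --- more a matter of bookkeeping than of genuine difficulty --- is whether the extra simplicity assumptions used in Examples~\ref{exm:product-lines} and~\ref{exm:square} (one critical point per nonzero critical value, and the specific incidence pattern of the vanishing cycles realizing $\OO=\OO_1\oplus\OO_2\oplus\cdots$) hold automatically within the stated class or should instead be incorporated as genericity hypotheses in the statements of (ii)--(iii). Once this is settled, no new ideas are required: the corollary is simply the synthesis of the three example computations together with the affine-invariance remark.
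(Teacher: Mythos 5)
Your proposal matches the paper's treatment: the corollary carries no separate proof there and is precisely the synthesis of Examples \ref{exm:1}, \ref{exm:product-lines} and \ref{exm:square}, whose computed indices $n_r=r$, $n_r=r+1$, $n_r=r+2$ you quote correctly and combine via \eqref{eq:n}. Your affine-normalization step (with the attendant scalar rescaling of $H$) and your caveat that the examples carry extra genericity hypotheses (e.g.\ one critical point per nonzero critical value in Example \ref{exm:product-lines}) merely make explicit what the paper leaves implicit, so the argument is essentially the same.
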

 \begin{remark}

    One observes that in examples  \ref{exm:1} to \ref{exm:square}, when the first integral is more degenerate, the number 
    $n_{\scriptscriptstyle H,\gamma}(k)$ increases. In the first two cases, one gets $n_{\scriptscriptstyle H,\gamma}(k)=k+\kappa-1$, where $\kappa$ is the depth introduced in \cite{MNOP}. However, in the third case of a pair of parallel lines, the depth $\kappa$ is infinity. 
   
\end{remark}

\section{The Lie algebra of the principal diagonal}\label{Sec:Lie-alg}

\subsection{Homomorphism between Lie algebras}

Here we show that   the Melnikov functions $M_k^\delta(t)$ of $\delta\in L_k$ define a universal  Lie algebra homomorphism from $gr\pi_1$ into a free Lie algebra $\mathfrak{a}\subset \A_1$, a much smaller object then $\A_1$ , with a similar property of universality.  

The key fact  is that, up to higher order terms, the Poincar\'e return map is given by the flow of a vector field defined by its leading term. More precisely,
\begin{equation*}
    P_\ep^{\delta_i}=t+\ep M_1^{\delta_i}(t)+O(\ep^2)= \tau^\ep\left(M_1^{\delta_i}(t)\partial_t\right)+O(\ep^2).
\end{equation*}
Here, $\tau^\ep$ is the time-$\ep$ flow of the vector field $v_i=M_1^{\delta_i}(t)\partial_t$. 
Similarly, for $\delta\in L_k$, define $v_{k,\eta}=M_k^{\delta}(t)\partial_t$, so
\begin{equation*}
    P_\ep^{\delta}=\tau^{\ep^k}\left(v_{k,\eta}\right)+O(\ep^{k+1}).
\end{equation*}
Note that this is not the case for terms appearing in lower diagonals in  \eqref{table-M_i}.

    Let $\mathfrak{X}(\C,0)$ be the Lie algebra of the germs of holomorphic vector fields on $(\C,0)$. Let $ gr\pi_1$ be the graded algebra associated to the filtration $\pi_1=L_1\supset L_2\supset\dots$. Recall that $gr \pi_1$ is a   Lie algebra, with the Lie bracket induced by the commutator in $\pi_1(X)$, see \cite{SerreBook}.

\begin{proposition}\label{prop:8.1}
     The mappings $v_{k,\eta}: L_k\to \mathfrak{X}(\C,0)$ induce a Lie algebras homomorphism  $v_\eta: gr \pi_1 \to \mathfrak{X}(\C,0)$.

     In particular, given $\delta\in L_k$ and $\delta'\in L_j$, then $$v_{k+k',\eta}[\delta,\delta']=[v_{k,\eta}(\delta),v_{k',\eta}(\delta')]=W(M_k^\delta,M_{k'}^{\delta'})\partial_t,$$
     where $W$ denotes the Wronskian of two functions. 
\end{proposition}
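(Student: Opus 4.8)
The plan is to read everything off from the holonomy maps $P_\epsilon^\delta$ and the composition law \eqref{eq:anti homo}, keeping careful track of the order in $\epsilon$.

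First I would check that $v_{k,\eta}$ is well defined and linear on the graded piece $\mathrm{gr}_k\pi_1=L_k/L_{k+1}$ (tensored with $\C$). If $\delta\in L_k$ then $M_\ell^\delta\equiv0$ for $\ell<k$, because $M_\ell$ is an iterated integral of length at most $\ell$; hence $P_\epsilon^\delta=\id+\epsilon^kM_k^\delta+O(\epsilon^{k+1})$, which agrees with the time-$\epsilon^k$ flow $\tau^{\epsilon^k}(M_k^\delta\partial_t)$ up to $O(\epsilon^{k+1})$ (using $2k\ge k+1$), as recalled above. The same length bound gives $M_k^\delta\equiv0$ for $\delta\in L_{k+1}$, and comparing the coefficients of $\epsilon^k$ in $P_\epsilon^{\delta_1\delta_2}=P_\epsilon^{\delta_2}\circ P_\epsilon^{\delta_1}$ gives $M_k^{\delta_1\delta_2}=M_k^{\delta_1}+M_k^{\delta_2}$ for $\delta_1,\delta_2\in L_k$. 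Thus $\bar\delta\mapsto M_k^\delta\partial_t$ descends to a $\C$-linear map $\mathrm{gr}_k\pi_1\to\mathfrak{X}(\C,0)$, and assembling over $k$ yields a graded linear map $v_\eta:\mathrm{gr}\,\pi_1\to\mathfrak{X}(\C,0)$.

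Next I would compute the bracket. Fix $\delta\in L_k$ and $\delta'\in L_{k'}$. By \eqref{eq:anti homo}, $P_\epsilon^{[\delta,\delta']}$ is the group commutator $(P_\epsilon^{\delta'})^{-1}\circ(P_\epsilon^{\delta})^{-1}\circ P_\epsilon^{\delta'}\circ P_\epsilon^{\delta}$ of the near-identity maps $(P_\epsilon^{\delta'})^{-1}=\id-\epsilon^{k'}M_{k'}^{\delta'}+O(\epsilon^{k'+1})$ and $(P_\epsilon^{\delta})^{-1}=\id-\epsilon^{k}M_{k}^{\delta}+O(\epsilon^{k+1})$. Since $[\delta,\delta']\in L_{k+k'}$, the length bound already gives $P_\epsilon^{[\delta,\delta']}-\id=O(\epsilon^{k+k'})$; moreover, commuting an $O(\epsilon^{k+1})$ (resp. $O(\epsilon^{k'+1})$) correction with an $O(\epsilon^{k'})$ (resp. $O(\epsilon^{k})$) map produces only $O(\epsilon^{k+k'+1})$ terms, so the coefficient of $\epsilon^{k+k'}$ is computed from the leading terms alone, by the elementary commutator expansion \eqref{eq:vector} applied with the two slots at orders $\epsilon^{k'}$ and $\epsilon^{k}$. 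A short computation gives that this coefficient equals $W(M_k^\delta,M_{k'}^{\delta'})$, the two sign reversals coming from the inverses in \eqref{eq:anti homo} being exactly what makes it positive. As $M_\ell^{[\delta,\delta']}\equiv0$ for $\ell<k+k'$, this coefficient is $M_{k+k'}^{[\delta,\delta']}$, so $M_{k+k'}^{[\delta,\delta']}=W(M_k^\delta,M_{k'}^{\delta'})$.

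Finally, using the identity $[f\partial_t,g\partial_t]=(fg'-f'g)\partial_t=W(f,g)\partial_t$ in $\mathfrak{X}(\C,0)$, the previous step reads $v_{k+k',\eta}([\delta,\delta'])=W(M_k^\delta,M_{k'}^{\delta'})\partial_t=[v_{k,\eta}(\delta),v_{k',\eta}(\delta')]$. Since the Lie bracket on $\mathrm{gr}\,\pi_1$ is induced on homogeneous classes by $[\bar\delta,\bar\delta']=\overline{[\delta,\delta']}$ and extended bilinearly, and $v_\eta$ is linear by the first step, this shows $v_\eta$ is a homomorphism of Lie algebras. The main obstacle is the bookkeeping of the third paragraph: isolating the exact $\epsilon^{k+k'}$-coefficient of the group commutator and checking it is $+W(M_k^\delta,M_{k'}^{\delta'})$ and not $-W$, which is where the anti-homomorphism and the orientation conventions on the $\delta_i$ must be handled with care; the rest is routine manipulation of the filtration and the composition law.
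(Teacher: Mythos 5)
Your proposal is correct and follows essentially the same route as the paper: the paper's one-line proof invokes the fact that the commutator of flows is the flow of the commutator of the vector fields up to higher-order terms, which is exactly the leading-order identity you establish by expanding the group commutator of the near-identity holonomies (your sign bookkeeping with $P^{\alpha\beta}_\epsilon=P^\beta_\epsilon\circ P^\alpha_\epsilon$ and $[\delta,\delta']=\delta\delta'\delta^{-1}\delta'^{-1}$ does give $+W(M_k^\delta,M_{k'}^{\delta'})$, consistent with the paper's Wronskian convention $W(f,g)=fg'-f'g$). Your additional verification of well-definedness and additivity on the graded pieces $L_k/L_{k+1}$ is detail the paper leaves implicit, not a different method.
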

\begin{proof} This immediately follows from the fact that a commutator of flows of two vector fields is the flow of commutators of the vector fields, up to higher  order terms in $\epsilon$.
\end{proof}

A universal version of $v_\eta$ can be defined using the universal holonomy $\scp$ as follows. 
Let $Diff(\A_1)$ be the linear space of the differentiations of $\A_1$, i.e., the linear maps $\A_1\to\A_1$ satisfying the Leibnitz rule.
It is a Lie algebra under the standard bracket $[\alpha,\beta]=\alpha\beta-\beta\alpha$.
There is a monomorphism $\iota:\A_1\to Diff(\A_1)$ defined by  $\iota(a)=a\partial$, where  $\partial$ denotes the differentiation $b\to b'$ in $\A_1$.
Clearly, $\left[\iota(a),\iota(b)\right]=\iota(ab'-a'b)$. 
Denote by $\mathcal{X}_1\subset\iota(\A_1)\subset Diff(\A_1)$ the Lie algebra generated by the elements $\iota(m_1(\delta_i))$.

By Lemma~\ref{lem:Gk small}, for $\delta\in L_k$ the differential polynomial $m_k(\delta)\in \A_1$. One can check, similarly as above, that
$$
\iota\left(m_{k+k'}([\delta,\delta'])\right)=\left[\iota(m_k(\delta)), \iota(m_{k'}(\delta'))\right],\quad \text{for}\,\,\delta\in L_k, \,\delta'\in L_{k'},$$
so the mappings $\iota\circ m_k$ induce a  Lie algebra homomorphism  $\mathfrak{P}: gr \pi_1 \to \mathfrak{X}_1$. Evidently, 
$$
v_\eta=\mathfrak{e}_\eta\circ\mathfrak{P}, \quad \text{where} \,\,\mathfrak{e}_\eta\left(\iota(m_1(\delta_i))\right)=M_1^{\delta_i}(t)\partial_t.
$$

\begin{remark}
    The exponential mapping $\exp:\mathfrak{g}\to \mathcal{G}$, $\exp(a)=1+\sum \frac{a^k}{k!}$, relates the group commutator in $\mathcal{G}$ with the Lie bracket above by  $\exp([a,b])=[\exp(a),\exp(b)]+\ep^{k+l}\mathfrak{m}$ for $a\in\ep^k\mathfrak{m}, b\in\ep^l\mathfrak{m}$.
\end{remark}

\begin{remark}\label{rem:wronskian}
    The classical Chen's theorem claims that, for any $\gamma\in\pi_1(\{H=t\})$, there exists a tuple of forms $\omega_{i_1},\dots,\omega_{i_k}$, such that the iterated integral $\int_\gamma\omega_{i_1},\dots,\omega_{i_k}\neq0$.
    This is the analogous to the classical de Rham duality, but in homotopy (instead of homology) and with iterated differential forms, instead of simple differential forms.
    Note that $\int_{[\gamma_1,\gamma_2]}\omega_1\omega_2=\det\left(\int_{\gamma_i}\omega_j\right)$, \cite{MNOP2}.

    In our settings of deformations, there is a similar formula 
    \begin{equation}
        m_{k+k'}([\delta,\delta'])=W\left(m_k(\delta), m_{k'}(\delta')\right),
    \end{equation}
    i.e., the Lie bracket now comes from a differential algebra $\A_1$, and not from the universal enveloping algebra of $gr \pi_1$.
    
\end{remark}

\subsection{Kernel of the evaluation map $\ev_\eta$ on the first diagonal $\A_1$.}
 
It is know that there exists a nontrivial identity $T_4\ne0$ in the free Lie algebra of one dimensional vector fields,
such that its evaluation vanishes for any $5$-tuple of holomorphic vector fields   \cite{Ki,R86}. Indeed, given any holomorphic vector fields $X_1,\ldots,X_5$, then the vector field
\begin{equation}\label{eq:T4-1}
T_4=\sum_{\beta\in S_4}\sgn(\beta) [X_{\beta(1)},[X_{\beta_{(2)}},[X_{\beta_{(3)}},[X_{\beta_{(4)}},X_5]]],
\end{equation} 
vanishes identically. 

We can ask ourselves if there exists a nontrivial kernel of the evaluation maps $\ev_\eta:\A_1\to\A_1^\eta.$ We answer this question in the following proposition.

\begin{proposition}\label{prop:k=0}
 For any non-zero differential polynomial $p\in\A_1$ there exists a one-form $\eta$ such that $\ev_\eta(p)\not\equiv0$, i.e. $\cap_{\eta}Ker(\ev_\eta)=\{0\}.$
\end{proposition}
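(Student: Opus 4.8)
The plan is to reduce the statement to a fact about abelian integrals and then, for a given nonzero $p\in\A_1$, to exhibit a finite-dimensional family of polynomial $1$-forms on which $\ev$ is nondegenerate enough to escape the zero locus of $p$. Recall that $\A_1$ is the free differential $\C$-algebra on the symbols $m_1(\delta_1),\dots,m_1(\delta_m)$, so a nonzero $p$ is a nonzero ordinary polynomial in finitely many indeterminates $m_1(\delta_i)^{(l)}$, $1\le i\le m$, $0\le l\le r$, for some $r$, and $\ev_\eta(m_1(\delta_i))=M_1^{\delta_i}(t)=\int_{\delta_i}\eta$ is an abelian integral. Since $\ev_\eta$ is a differential algebra homomorphism, it follows that $\ev_\eta(p)(t)=p\big(M_1^{\delta_1}(t),\dots,(M_1^{\delta_m})^{(r)}(t)\big)$, so it suffices to find a polynomial $1$-form $\eta$ for which the $m(r+1)$ functions $(M_1^{\delta_i})^{(l)}$ do not satisfy the identity $p$ at some point $t_*$ near $t_0$.

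First I would fix polynomial $1$-forms $\omega_1,\dots,\omega_m$ on $\C^2$ whose de Rham classes on $\{H=t_0\}$ form a basis of $H^1_{\mathrm{dR}}(\{H=t_0\})$; this is possible because algebraic $1$-forms generate the de Rham cohomology of a smooth affine curve. Since $\{\delta_i\}$ is a homology basis and the de Rham pairing is perfect, the period matrix $\Pi(t)=\big(\pi_{ij}(t)\big)_{i,j}$, with $\pi_{ij}(t)=\int_{\delta_i}\omega_j$, satisfies $\det\Pi(t_0)\ne0$, hence $\det\Pi(t)\ne0$ for all $t$ near $t_0$. The test family is $\eta_{\vec c}=\sum_{j=1}^m\sum_{k=0}^r c_{jk}\,H^k\omega_j$ for $\vec c=(c_{jk})\in\C^{m(r+1)}$, which is again a polynomial $1$-form. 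The role of the factors $H^k$ is the classical observation that $H$ is constant equal to $t$ on the fiber $\{H=t\}$, so $\int_{\delta_i}H^k\omega_j=t^k\pi_{ij}(t)$, and therefore $\ev_{\eta_{\vec c}}\!\big(m_1(\delta_i)\big)=I_i(t):=\sum_{j,k}c_{jk}\,t^k\pi_{ij}(t)$.

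The core step is to prove that for generic $t_*$ near $t_0$ the linear map $\Psi\colon\C^{m(r+1)}\to\C^{m(r+1)}$, $\vec c\mapsto\big(I_i^{(l)}(t_*)\big)_{1\le i\le m,\,0\le l\le r}$, is an isomorphism. Grouping the columns of $\Psi$ according to $j$, its image is the span of the $r$-jets at $t_*$ of the vector functions $t^k\cdot(\text{$j$th column of }\Pi)$, $0\le k\le r$; since $\{t^k\}_{k\le r}$ and $\{(t-t_*)^k\}_{k\le r}$ span the same polynomials, this span equals the smallest subspace of the jet space $\bigoplus_{i=1}^m\C^{r+1}$ that contains the $r$-jets of the columns of $\Pi$ and is invariant under the nilpotent operator $S$ induced on $\bigoplus_i\C^{r+1}$ by multiplication by $(t-t_*)$. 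A short computation with $S$ shows that the projection of any $S$-invariant subspace onto the ``order-zero'' summand $\bigoplus_i\C$ coincides with the column span of $\Pi(t_*)$, and conversely that once $\Pi(t_*)$ is invertible one recovers the full jet space, descending one jet order at a time. Since $\det\Pi(t_*)\ne0$, $\Psi$ is surjective, hence bijective. Granting this, $\ev_{\eta_{\vec c}}(p)(t_*)=p\big(\Psi(\vec c)\big)$ is a nonzero polynomial in $\vec c$ (as $p\ne0$ and $\Psi$ is an isomorphism), so it is nonzero for generic $\vec c_0$; taking $\eta=\eta_{\vec c_0}$ proves the proposition, and $\bigcap_\eta\mathrm{Ker}(\ev_\eta)=\{0\}$ is then immediate.

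I expect the isomorphism statement for $\Psi$ to be the real content, rather than the conclusion drawn from it: one has to see that varying a \emph{single} $1$-form already lets one prescribe the entire $r$-jet of the whole tuple $(M_1^{\delta_i})_i$ at a point, and that the only possible obstruction is degeneracy of $\Pi(t_*)$, which is excluded by de Rham duality. The two ingredients that must be dovetailed are precisely that multiplication by $H$ acts as multiplication by $t$ on periods and that the period matrix is nondegenerate. (By the remark following Theorem~A, the same argument applies verbatim to entire forms $\eta$.)
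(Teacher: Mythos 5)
Your proposal is correct and follows essentially the same route as the paper: both reduce the statement to realizing an arbitrary $r$-jet of the tuple of Abelian integrals $\bigl(\int_{\delta_i(t)}\eta\bigr)_i$ at a regular value, using the algebraic de Rham theorem (nondegeneracy of the period matrix) together with the fact that multiplication by powers of $H$ acts as multiplication by powers of $t$ on periods. The only difference is packaging --- you prove surjectivity of the linear evaluation map on the family $\sum c_{jk}H^k\omega_j$ via the nilpotent shift on jet space, while the paper builds $\eta$ inductively, correcting one jet order at a time with terms $(H-t_0)^{r+1}\nu_{r+1}$ --- which is the same mechanism.
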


\begin{remark}  
We first explain why this is not in contradiction with the example provided by $T_4$.
Given any $5$-tuple of loops $\delta_1,\ldots,\delta_5$ in $\pi_1$, consider the loop given by the $T_4$ relation:
  $$T_4(\delta_1,\dots,\delta_5)=\prod_{\beta\in S_4} [\delta_{\beta(1)},[\delta_{\beta_{(2)}},[\delta_{\beta_{(3)}},[\delta_{\beta_{(4)}},\delta_5]]]^{\sgn(\beta)}\in L_5.$$
Note that $T_4(\delta_1,\dots,\delta_5)\ne0$ in $\pi_1$.
Given a form $\eta$, let $v_{i,\eta}$, $i=1,\ldots,5$, be the vector fields realizing the holonomy along the loops $\delta_i$.
Then, the vector field $T_4(v_{1,\eta}\ldots,v_{5,\eta})$, associated to $T_4(\delta_1,\dots,\delta_5)$ vanishes by the $T_4$ relation. Recall that $v_{j,\eta}=M_1^{\delta_j}(t)\partial_t$. Therefore, $T_4(M_1^{\delta_1},\ldots,M_1^{\delta_5})$ vanishes identically in $\A_1$.

This exhibits a nontrivial element lying in the kernel of any homomorphism $v_\eta:gr\pi_1\rightarrow\mathfrak{X}(\C,0)$.
 
 Let $p\in\A_1$ such that
  $$\iota(p)=\sum_{\beta\in S_4} \sgn(\beta)[\iota(m_1({\delta_{\beta(1)})}),[\iota(m_1({\delta_{\beta_{(2)}}})),[\iota(m_1({\delta_{\beta_{(3)}}})),[\iota(m_1({\delta_{\beta_{(4)}}})),\iota(m_1(\delta_5))]]].$$
  Then, $\ev_\eta(p)=v_\eta(T_4)=0$.

However, $\iota(p)$ is also zero in $Diff(\A_1)$ (and therefore also $p$ is zero in $\A_1$), since relation $T_4$ is trivially satisfied for all Lie algebras coming from differential algebras with one differentiation, i.e., with Lie bracket defined by $[a,b]=ab'-a'b$. Indeed, for such a Lie bracket, all monomials on the left-hand side will cancel out after opening the brackets. This is not the case for the Lie algebra of vector fields on the plane, where the Lie bracket definition uses two differentiations. In fact, \cite{PR} shows that the opposite is true: the vanishing of $T_4$  is verified in a semiprime Lie algebra only if it can be embedded into a differential algebra, with the Lie bracket defined as above. 
\end{remark}

Let  $k$ be the  order of $p$, and consider $p$ as a polynomial in $m_1(\delta_i)^{(j)}$, $i=1,\dots,m$, $j=0,\dots,k$: 
$$
p=P\left(m_1(\delta_i)^{(j)}\right), \quad 0\neq P\in\C[x],\, x\in \C^{m(k+1)}.
$$

The proof of the Proposition  follows from the following:
\begin{lemma}\label{lem:K=0 at t0}
For any $x=(x_{ij})\in \C^{m(k+1)}$ and any regular point $t_0\in\C$ of $H$, there exists a polynomial one-form $\eta$ 
such that the Abelian integrals $I_i(t)=\int_{\delta_i(t)}\eta$, $i=1,...,m$, verify 
\begin{equation}\label{eq:Lem9.2}
    I_i^{(j)}(t_0)=x_{ij},\quad i=1,\dots,m, \,j=0,\dots,k.
\end{equation}
\end{lemma}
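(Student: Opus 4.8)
The plan is to realize the prescribed jet by a one-form of the special shape $\eta=\sum_{s=1}^m f_s(H)\,\omega_s$, where $\omega_1,\dots,\omega_m$ are \emph{fixed} polynomial $1$-forms on $\C^2$ and $f_1,\dots,f_m\in\C[t]$ are polynomials to be chosen. This ansatz decouples the choice of the relative cohomology class of $\eta$ from the choice of its $t$-dependence, and turns the problem into a triangular linear-algebra recursion.

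First I would fix the forms $\omega_s$. Since $C:=H^{-1}(t_0)$ is a smooth affine curve, the restriction map from polynomial $1$-forms on $\C^2$ to $\Omega^1(C)$ is surjective (because $\Omega^1(C)$ is generated over $\mathcal{O}(C)=\C[x,y]/(H-t_0)$ by $dx,dy$), hence so is the induced map to $H^1_{\mathrm{dR}}(C)$; as $\dim_\C H^1_{\mathrm{dR}}(C)=b_1(C)=m$ (recall $\pi_1(C,p_0)\cong\mathbb{F}_m$), one may pick polynomial $1$-forms $\omega_1,\dots,\omega_m$ whose classes form a basis of $H^1_{\mathrm{dR}}(C)$. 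By perfectness of the de Rham pairing $H_1(C)\times H^1_{\mathrm{dR}}(C)\to\C$ and the fact that $\{\delta_i\}_{i=1}^m$ is a basis of $H_1(C)$, the period matrix $\Pi(t_0):=\bigl(\int_{\delta_i(t_0)}\omega_s\bigr)_{1\le i,s\le m}$ is invertible. Since $t_0$ is a regular value of $H$, the fibration $H$ is locally trivial near $t_0$, each entry $\Pi_{is}(t):=\int_{\delta_i(t)}\omega_s$ is holomorphic in $t$ near $t_0$, and $\Pi(t)$ stays invertible on a neighbourhood of $t_0$.

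Next, for $\eta=\sum_s f_s(H)\,\omega_s$ one has $f_s(H)\equiv f_s(t)$ on the cycle $\delta_i(t)\subset H^{-1}(t)$, so
\begin{equation*}
    I_i(t)=\int_{\delta_i(t)}\eta=\sum_{s=1}^m f_s(t)\,\Pi_{is}(t).
\end{equation*}
Differentiating $j$ times and evaluating at $t_0$, the Leibniz rule gives
\begin{equation*}
    I_i^{(j)}(t_0)=\sum_{s=1}^m f_s^{(j)}(t_0)\,\Pi_{is}(t_0)\;+\;\sum_{s=1}^m\sum_{a=0}^{j-1}\binom{j}{a}f_s^{(a)}(t_0)\,\Pi_{is}^{(j-a)}(t_0),
\end{equation*}
where the second sum involves only the numbers $f_s^{(a)}(t_0)$ with $a<j$. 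Hence the equations $I_i^{(j)}(t_0)=x_{ij}$, $1\le i\le m$, $0\le j\le k$, form a block–triangular system in the unknowns $f_s^{(j)}(t_0)$, which I would solve recursively in $j$: for $j=0$ the system reads $\Pi(t_0)\,(f_s(t_0))_s^{\top}=(x_{i0})_i^{\top}$, uniquely solvable by invertibility of $\Pi(t_0)$; and for each subsequent $j$, once all $f_s^{(a)}(t_0)$ with $a<j$ are fixed the right-hand side is a known vector and, again by invertibility of $\Pi(t_0)$, determines $(f_s^{(j)}(t_0))_s$ uniquely. Finally I would set $f_s(t):=\sum_{j=0}^k \frac{f_s^{(j)}(t_0)}{j!}(t-t_0)^j\in\C[t]$; then $\eta=\sum_s f_s(H)\,\omega_s$ is a polynomial $1$-form satisfying \eqref{eq:Lem9.2}.

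The only genuinely non-formal inputs are the two classical facts used in the second paragraph: surjectivity of the restriction of polynomial $1$-forms onto $H^1_{\mathrm{dR}}$ of the regular fiber (together with $\dim_\C H^1_{\mathrm{dR}}(C)=m$), and perfectness of the de Rham pairing on the fiber. Everything else is the recursive linear algebra above, so I do not expect a serious obstacle; the one ``idea'' is the ansatz $\eta=\sum_s f_s(H)\,\omega_s$, which separates the cohomological degrees of freedom from those of the coefficient functions and makes the jet map manifestly triangular, hence surjective.
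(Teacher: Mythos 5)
Your proof is correct and is essentially the paper's argument in a slightly repackaged form: both rest on the algebraic de Rham theorem to produce polynomial forms with invertible period matrix on the fiber, and on the fact that multiplying by polynomials in $H$ (equivalently, adding terms $(H-t_0)^{r+1}\nu_{r+1}$, as the paper does inductively) affects the jets of the Abelian integrals at $t_0$ in a triangular way. Your ansatz $\eta=\sum_s f_s(H)\,\omega_s$ solved as one block-triangular system is exactly the form the paper's inductive construction produces, so there is no substantive difference.
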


\begin{proof}[Proof of Lemma \ref{lem:K=0 at t0}] 
       Let $t_0$ be an arbitrary regular value of $H$.
           For $k=0$, this means that there exists a polynomial form $\eta_0$ such that $\int_{\delta_i(t_0)}\eta_0=x_{i0}$, for any $m$-tuple $x_{i0}$, $i=1,\cdots,m$. This follows from the algebraic de Rham theorem. Indeed, there exists a basis of polynomial one forms dual to the cycles $\delta_i$. Using it and resolving the corresponding system of equations, one shows the existence of the form $\eta_0$.

       Assume, now that $x_{ij}=\int_{\delta_i(t_0)}\eta_r$, for $j=0,...,r$. We construct a form $\eta_{r+1}$ verifying the condition on the level $r+1$, without modifying the previously achieved conditions. We put $\eta_{r+1}=\eta_r+(H-t_0)^{r+1}\nu_{r+1}$, where 
$\nu_{r+1}$ is the form such that 
           $$\frac{1}{(r+1)!}\int_{\delta_i(t_0)}\nu_{r+1}=x_{i,r+1}-\left(\int_{\delta_i(t_0)}\eta_{r}\right)^{(r+1)}.$$
We conclude by induction.
      \end{proof}     

\begin{proof}[Proof of the Proposition~\ref{prop:k=0}]
    Choose $x\in \C^{m(k+1)}$ such that $P(x)\neq0$, and let $\eta$ be the polynomial one-form as in Lemma~\ref{lem:K=0 at t0}.
    Then $\ev_\eta(p)(t_0)=p(I_1,\dots,I_m)=P(x)\neq0$.
\end{proof}

\subsection{Sharpness of the  upper bound $\nu_{H,\gamma}(1)\le n_{H,\gamma}(1)$}
In this section we consider forms $\omega$ holomorphic in $F^{-1}(U)$, where $U\subset\C$ is a domain. In other words, $\omega\in F_*\Omega^1(U)$. For such forms a literal analogue of Theorem~\ref{thm:main1}  holds, thus allowing us to  define a \emph{local Noetherianity index} $\nu^U_{H,\gamma}(k)$,  as in  Definition~\ref{def:noether}. Necessarily, $\nu_{H,\gamma}(k)\le \nu^U_{H,\gamma}(k)\le n_{H,\gamma}(k)$.
\begin{proposition}\label{prop:sharpness}
    There is some open set $U\subset\C$ and a form $\eta\in F_*\Omega^1(U)$  such that $\nu^U_{H,\gamma}(1)=n_{H,\gamma}(1)$. 
\end{proposition}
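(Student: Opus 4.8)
The plan is to produce one form $\eta$, on a suitably small disc $U$, for which the first-diagonal Melnikov functions vanish up to order $n_{H,\gamma}(1)-1$ but not at order $n_{H,\gamma}(1)$; since $\nu^U_{H,\gamma}(1)\le n_{H,\gamma}(1)$ is already known, such an $\eta$ forces equality. Write $n_1=n_{H,\gamma}(1)$. If $n_1=1$ the universal chain \eqref{eq:D_r-n_r} (for $r=1$) is constant, which forces $\sqrt{D_{1,s}^\eta}=\sqrt{D_{1,1}^\eta}$ for every $\eta$ and every $s$, hence $\nu_1^\eta=1=n_1$, and we are done; so assume $n_1\ge 2$. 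By minimality of $n_1$, the chain in \eqref{eq:D_r-n_r} has not stabilized at step $n_1-1$, i.e.\ $\sqrt{D_{1,n_1-1}}\subsetneq\sqrt{D_{1,n_1}}$ in $\A_1$. Since $D_{1,n_1}$ is obtained from $D_{1,n_1-1}$ by adjoining the polynomials $m_{n_1}(\sigma_{n_1,\alpha})$ attached to a finite generating family $\{\sigma_{n_1,\alpha}\}$ of $\OO\cap L_{n_1}$ modulo $\OO\cap L_{n_1+1}$ — which lie in $\A_1$ by Theorem~\ref{thm: thm universal} — there is one of them, say $m_{n_1}(\sigma)$, that does not lie in $\sqrt{D_{1,n_1-1}}$.

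A radical differential ideal being the intersection of the prime differential ideals containing it, fix a prime differential ideal $\mathfrak p\supseteq D_{1,n_1-1}$ of $\A_1$ with $m_{n_1}(\sigma)\notin\mathfrak p$; since all the differential polynomials in play have rational coefficients, we may carry this out inside the free differential $\Q$-algebra $\A_1^\Q$ generated by the symbols $m_1(\delta_1),\dots,m_1(\delta_m)$, with a corresponding prime $\mathfrak p^\Q$. The residue field $K=\operatorname{Frac}(\A_1^\Q/\mathfrak p^\Q)$ is a differential field finitely generated over $\Q$, so by Seidenberg's embedding theorem (every such field is differentially isomorphic to a field of germs of meromorphic functions on a domain) we may realize $K$ as a field of germs of meromorphic functions on some open disc $U_0\subset\C$. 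Writing $\phi_i$ for the image of $m_1(\delta_i)$, we then have $g(\phi_1,\dots,\phi_m)\equiv0$ for every $g\in D_{1,n_1-1}$, while $m_{n_1}(\sigma)(\phi_1,\dots,\phi_m)\not\equiv0$.

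It remains to realize the $\phi_i$ as periods of a holomorphic form. Fix polynomial $1$-forms $\omega_1,\dots,\omega_m$ whose restrictions to a fiber form a basis of its de Rham cohomology $H^1_{\mathrm{dR}}$ (algebraic de Rham theorem for affine curves). Choose a regular value $t_1\in U_0$ of $F$ that is not a pole of any $\phi_i$, and a disc $U\subset U_0$ about $t_1$ avoiding $\Sigma_F$ and all poles of the $\phi_i$, small enough that the period matrix $P(t)=\bigl(\int_{\delta_i(t)}\omega_j\bigr)_{1\le i,j\le m}$ is invertible on $U$ — possible, since $P(t_1)$ is invertible by the de Rham pairing and invertibility is open. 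Put $(a_1,\dots,a_m)^\top=P(\cdot)^{-1}(\phi_1,\dots,\phi_m)^\top$, holomorphic on $U$, and set
\begin{equation*}
  \eta:=\sum_{j=1}^m a_j(F)\,\omega_j\in F_*\Omega^1(U)
\end{equation*}
(holomorphic on $F^{-1}(U)$ since $\omega_j$ is polynomial and $a_j$ is holomorphic on $U$). Then $\int_{\delta_i(t)}\eta=\sum_j a_j(t)\,P(t)_{ij}=\phi_i(t)$, so $M_1^{\delta_i}=\phi_i$; and since $\ev_\eta\colon\A_1\to\Hol$ is a homomorphism of differential algebras with $\ev_\eta(m_1(\delta_i))=\phi_i$, it follows that $\ev_\eta(D_{1,n_1-1})=(0)$ while $\ev_\eta(m_{n_1}(\sigma))=M_{n_1}^{\sigma}=m_{n_1}(\sigma)(\phi)\not\equiv0$. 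Therefore $\sqrt{D_{1,n_1-1}^\eta}=(0)\subsetneq\sqrt{D_{1,n_1}^\eta}$, so the $\eta$-chain \eqref{eq:D_r-n_r} is not stable at row $n_1-1$; hence $\nu_1^\eta\ge n_1$, and $\nu^U_{H,\gamma}(1)$, being $\ge\nu_1^\eta$ for this particular $\eta$, is $\ge n_1$. Combined with $\nu^U_{H,\gamma}(1)\le n_{H,\gamma}(1)=n_1$, this gives the equality.

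The main obstacle — and the reason one cannot simply reuse the finite-jet prescription of Lemma~\ref{lem:K=0 at t0} as in the proof of Proposition~\ref{prop:k=0} — is that the lower-order first-diagonal Melnikov functions must vanish \emph{identically}, not merely to high finite order. Seidenberg's theorem resolves exactly this point, turning the abstract generic point of $\mathfrak p$ into genuine analytic functions; the surjectivity of the period map onto holomorphic $m$-tuples over a small disc (via the invertible period matrix) then lets us realize those functions by an honest form on $F^{-1}(U)$. A secondary point to be careful about is the rationality bookkeeping needed to apply Seidenberg over $\Q$ and to ensure the constants of $K$ behave correctly, which is why the generating family of $\OO\cap L_{n_1}$ and the distinguished cycle $\sigma$ should be chosen $\Q$-rational from the start.
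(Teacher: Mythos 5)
Your proof is correct, and its skeleton coincides with the paper's: isolate a generator $m_{n_1}(\sigma)$ of the $n_1$-th row of the first diagonal lying outside $\sqrt{\CC_{1,n_1-1}}$, produce a tuple of analytic functions $(\phi_1,\dots,\phi_m)$ annihilating the lower ideal identically but not $m_{n_1}(\sigma)$, and realize that tuple as periods $\int_{\delta_i(t)}\eta$ of a form $\eta\in F_*\Omega^1(U)$ by inverting a period matrix furnished by the algebraic de Rham theorem --- this last step is essentially word for word the paper's. Where you diverge is the existence statement for the tuple: the paper applies Ritt's theorem on zeros (Theorem~\ref{thm:ritt of zeros}) ``in a standard way'' to the radical ideal $\sqrt{\CC_{1,n-1}}$ and the element $m_n(\sigma_{n,n'})$, obtaining directly a convergent germ solution separating them, whereas you first pass to a prime differential ideal $\mathfrak{p}\supseteq D_{1,n_1-1}$ avoiding $m_{n_1}(\sigma)$, descend to the free differential $\Q$-algebra (legitimate, since the polynomials $m_j(\sigma)$ attached to group elements have rational coefficients), and embed the finitely generated residue field into a field of meromorphic functions via Seidenberg's embedding theorem. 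Both routes are sound: yours makes explicit the prime-decomposition/separation argument that the paper compresses into ``in a standard way,'' and yields meromorphic realizations on a whole domain rather than mere germs, at the price of importing a theorem (Seidenberg) not in the paper's toolbox together with the $\Q$-rationality bookkeeping; the paper's route is shorter and uses only its stated Theorem~\ref{thm:ritt of zeros}. Your concluding bookkeeping ($\ev_\eta$ kills $D_{1,n_1-1}$ because it kills its generators, $\sqrt{D^\eta_{1,n_1-1}}=(0)$ since $\A_1^\eta\subset\Hol$ is a domain, while $\ev_\eta\left(m_{n_1}(\sigma)\right)\not\equiv0$ by the identity theorem on the connected domain $U_0$, whence $\nu_1^\eta\ge n_1$) matches the paper's final step, and the degenerate case $n_1=1$ is handled correctly.
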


The proof is based on Ritt's theorem on zeros (see \cite[Theorem~5]{PLect}):

\begin{theorem}[Ritt's theorem on zeros]\label{thm:ritt of zeros}
Let $p_1, . . . , p_n\in\A_1$ be
differential polynomials. Then the system $p_1(m) = \dots = p_n(m) = 0$, where $m=(m_1(\delta_1),\dots,m_1(\delta_m))$,
has a power series solution $\mu=(\mu_1(t),\dots,\mu_m(t))$ with a nonzero convergence radius if and only if
$1\notin\langle p_1,\dots, p_n\rangle.$
\end{theorem}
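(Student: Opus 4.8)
If $1\in\langle p_1,\dots,p_n\rangle$, then, since the differential ideal generated by the $p_i$ is the whole ring, we may write $1=\sum_{i,j}a_{ij}\,p_i^{(j)}$ for finitely many $a_{ij}\in\A_1$, where $p_i^{(j)}$ denotes the $j$-th formal derivative of $p_i$. A convergent power series $m$-tuple $\mu=(\mu_1,\dots,\mu_m)$ determines an evaluation homomorphism of differential $\C$-algebras $\ev_\mu\colon\A_1\to\Hol$, $m_1(\delta_i)\mapsto\mu_i$, under which $p_i^{(j)}\mapsto(d/dt)^{j}\bigl(p_i(\mu)\bigr)$. Hence, if $p_1(\mu)=\dots=p_n(\mu)=0$, applying $\ev_\mu$ to the displayed identity sends its right-hand side to $0$ and its left-hand side to $1$, which is absurd; so no solution exists.

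\textbf{The converse} is the classical Ritt zero theorem, and the plan is to follow the characteristic-set proof. Assume $1\notin\mathfrak r:=\langle p_1,\dots,p_n\rangle$. By the Ritt--Raudenbush Theorem~\ref{thm:Ritt-Raudenbush}, the ascending chain condition on radical differential ideals of $\A_1$ yields an irredundant decomposition $\mathfrak r=\mathfrak p_1\cap\dots\cap\mathfrak p_s$ into finitely many prime differential ideals; as $\mathfrak r\ne(1)$, some component $\mathfrak p:=\mathfrak p_i$ is proper, and any zero of $\mathfrak p$ is a zero of $\mathfrak r\ni p_1,\dots,p_n$. So it suffices to produce one convergent power series zero of the proper prime differential ideal $\mathfrak p$. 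Fix a ranking of the derivatives $m_1(\delta_i)^{(k)}$ and let $\mathcal C=\{C_1,\dots,C_r\}$ be a characteristic set of $\mathfrak p$; let $S$ be the product of the initials and separants of the $C_\ell$. Since $\mathfrak p$ is prime one has $S\notin\mathfrak p$, and $\mathfrak p$ equals the saturation of the differential ideal $[\mathcal C]$ generated by $\mathcal C$ by powers of $S$.

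Now I would build a convergent power series point of $\mathcal C=0$ at which $S$ is a nonzero series. The derivatives split into \emph{principal} ones (the leaders of the $C_\ell$ together with their derivatives) and \emph{parametric} ones; to the parametric derivatives assign the Taylor coefficients, at a chosen base point, of freely chosen convergent power series, taken generic enough that the induced value of $S$ is not the zero series (possible because $S\notin\mathfrak p$). Because there is a single derivation, solving the relations $C_\ell=0$ for the principal derivatives amounts to a Cauchy problem for a system of analytic ordinary differential equations --- implicit in general, but resolvable by the analytic implicit function theorem exactly where the separants do not vanish --- so the classical Cauchy existence theorem for analytic ODEs supplies a solution $\mu=(\mu_1,\dots,\mu_m)$ of positive radius of convergence. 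Finally, for any $f\in\mathfrak p$ one has $S^Nf\in[\mathcal C]$ for some $N$; since $\ev_\mu$ annihilates $[\mathcal C]$ and $S(\mu)\ne0$ in the integral domain $\Hol$, we conclude $f(\mu)=0$. In particular $p_1(\mu)=\dots=p_n(\mu)=0$, and $\mu$ is the desired solution.

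\textbf{Main obstacle.} Producing a \emph{formal} power series solution of $\mathcal C=0$ is a routine term-by-term recursion; the entire content of the theorem is that it can be chosen convergent, which is exactly what the reduction to a Cauchy problem for an analytic ODE system achieves --- the delicate point being the choice of the parametric data keeping the separants nonzero, so that the implicit ODE is genuinely resolvable. An alternative route replaces the characteristic-set step by Seidenberg's embedding theorem: one specializes the field of constants of $\operatorname{Frac}(\A_1/\mathfrak p)$ (possibly transcendental over $\C$) down to $\C$, embeds the resulting differential field of finite transcendence degree into the field of germs of meromorphic functions at a point of $\C$, and recenters to avoid the poles of the finitely many germs representing the $m_1(\delta_i)$; those germs are then a convergent solution.
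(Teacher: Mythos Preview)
The paper does not supply its own proof of this statement: Ritt's theorem on zeros is quoted from the literature (the reference is \cite[Theorem~5]{PLect}) and then used as a black box in the proof of Proposition~\ref{prop:sharpness}. So there is no in-paper argument to compare your proposal against.

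That said, your sketch is the standard characteristic-set proof and is essentially sound. One small slip: you decompose $\mathfrak r=\langle p_1,\dots,p_n\rangle$ directly as an intersection of prime differential ideals, but Ritt--Raudenbush only guarantees this for \emph{radical} differential ideals; you should first pass to $\sqrt{\mathfrak r}$, which is harmless since $1\notin\mathfrak r\Leftrightarrow 1\notin\sqrt{\mathfrak r}$ and every zero of $\sqrt{\mathfrak r}$ is a zero of $\mathfrak r$. The remaining steps --- characteristic set of a prime component, generic choice of parametric data keeping the product of initials and separants nonzero, and the analytic Cauchy theorem to turn the resulting explicit ODE system into a convergent solution --- are exactly the classical argument. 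Your alternative route through Seidenberg's embedding theorem is also a legitimate and well-known proof.
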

\begin{proof}[Proof of Proposition \ref{prop:sharpness}]

In a standard way, Ritt's theorem on zeros implies that, for any radical differential ideal $\mathcal{C}\subset \A_1$ and any $g\in\A_1\setminus\mathcal{C}$, there exists a tuple of germs of functions $\mu=\{\mu_i\}_{i=1,\dots,m}$,  such that $p(\mu)\equiv0$, for any $p\in \mathcal{C}$, but $g(\mu)\not\equiv0$.

Let $n=n_{H,\gamma}(1)$ be the universal Noetherianity index, and let
 $m_n(\sigma_{n,n'})\in\A_1$ be a differential polynomial that doesn't belong to the radical differential ideal $\sqrt{\CC_{1,n-1}}$.  Let $\mu$ be a tuple of germs of functions at some point $t_0$ such that $p(\mu)=0$, for any $p\in\sqrt{\CC_{1,n-1}}$,  but  $m_n(\sigma_{n,n'})(\mu)\neq0$.

 We claim that the tuple $\mu$ can be realized as periods of some $\eta\in F_*\Omega^1(U)$, for some sufficiently small $U\subset\C$,  
 \begin{equation}\label{eq:local sharpness}
      \forall t\in U\quad\mu_i(t)=\int_{\delta_i(t)}\eta.
 \end{equation}
Indeed, assume that $\mu$ is defined in some  $\tilde{U}$, with $ t_0\in\tilde{U}$. Choosing, if necessary, another point in  $\tilde{U}$, we can assume that $t_0$ is a regular value  of $F$. Using the algebraic de Rham theorem, choose algebraic forms $\omega_1,\dots,\omega_m$  whose restrictions to $\{F=t_0\}$ form a basis in $H^1(\{F=t_0\},\C)$. Denote by  $P(t)=\{\int_{\delta_i(t)}\omega_j\}_{i,j=1}^m$   the corresponding matrix of periods defined by continuous deformation of $\delta_i(t_0)$. By the definition, $\det P(t_0)\neq0$. Let $U\subset \tilde{U}$ be sufficiently small such that $\det P(t)\neq0$ for all $t\in U$, and let $\lambda(t)=P^{-1}(t)\mu(t)$ be a tuple of functions holomorphic in $u$. Then the form $\eta=\sum\lambda_i\omega_i\in F_*\Omega^1(U)$  satisfies \eqref{eq:local sharpness}.

 Clearly, for  any $p\in\sqrt{\CC_{1,n-1}}$   we have $\ev_\eta(p)=p(\mu)=0$, but $\ev_\eta(m_n(\sigma_{n,n'}))=m_n(\sigma_{n,n'})(\mu)\neq0$. Thus, $\nu^U_{H,\gamma}(1)\ge n=n_{H,\gamma}(1)$, as required.

\end{proof}

\begin{remark}
    Conjecturally, the bound is sharp for Noetherianity index $\nu_{H,\gamma}(1)$ as well, i.e. for the class of polynomial one-forms. This sharpness should necessarily use the invariance of $\CC$ under monodromy, which wasn't used in the above proof.

    \end{remark}
\section{Prospective}\label{Sec:Prospective}

\begin{problem}\hfill
\begin{itemize}
    \item[(i)] Does the  Noetherianity index $\nu_{\scriptscriptstyle H,\gamma}$ coincide with the universal Noetherianity index $n_{\scriptscriptstyle H,\gamma}$? Do they coincide at least in the Examples \ref{exm.Triangle}, \ref{exm:product-lines} and \ref{exm:square}, which we studied? 
    \item[(ii)] In all the examples in which we calculated the  universal Noetherianity index, it was of the form $n_{\scriptscriptstyle H,\gamma}(k)=k+s$, with $s\in\N$. A natural question is, if it is always the case.
\item[(ii)] 
    What is the biggest possible value of $\nu_{\scriptscriptstyle H,\gamma}(k)$, in function of $k$? 
     \end{itemize}
\end{problem}
In this work we studied the relationship between the length of Melnikov functions $M_j$ as iterated integrals and the identical vanishing of the initial Melnikov functions $M_i$, $i=1,\ldots,k$. 
We think that this relationship is important for instance in the study of the following problem. 

\begin{problem}
   Let $\C_{n+1}[x,y]$ be the space of polynomials of degree $n+1$ and $\Omega^1_n$ the space of polynomial 1-forms of degree at most $n$
   
   Given a polynomial $H\in\C_{n+1}[x,y]$,  and a loop $\gamma(t)\in \pi_1( H^{-1}(t),p_0)$, 
   consider the space of deformations \eqref{eq:deformation}, for $\eta\in\Omega_n^1$ and $P^\eta$ the Poincaré map of \eqref{eq:deformation}, along $\gamma$.
   Let $\mu(\eta)$ be the order of $P^\eta-id$, with respect to $\epsilon$. If $P^\eta\equiv id$, we put 
   $\mu(\eta)=\infty$. We define the \emph{weak center order} $\mu=\mu(F, \gamma)$ by 
   $$\mu(H,\gamma)=\max_{\eta\in\Omega^1_n}\{\mu(\eta)<\infty\},\quad \mu(H)=\max_{\gamma}\{\mu(H,\gamma)\}.$$
   Determine $\mu=\mu(H,\gamma)$.

   This problem is analogous to the problem of the maximal weak focus order of polynomial systems of a given order.

   We think that our result is the first step in the solution of the above problem, as we show that the vanishing of the initial Melnikov functions reduces the length of the first non-vanishing Melnikov function, hence simplifying its study. 
\end{problem}

\begin{problem}In \cite{BenNov} an upper bound on the number of zeros of the first non-vanishing Melnikov function $M_k$ of a perturbation of a Hamiltonian system was given in terms of the degrees of the Hamiltonian and the perturbations, and the order $k$ of $M_k$. This bound was a consequence of an upper bound of \cite{BNY} applied to the differential system satisfied by the iterated integrals of length $k$. Reduction of the length (and applying the better bound of \cite{BenNov}) could potentially improve this bound. 
    
\end{problem}
\begin{problem} This article deals with perturbations $dH+\epsilon\eta=0$ of a Hamiltonian system, which means that the return maps are perturbations of the identity mappings. It seems interesting to apply the same reasoning to 
perturbations of Darboux system near a leaf with a non-trivial fundamental group.  In that case, the holonomies of the unperturbed system along the non-trivial loops  are linear maps \cite{Ca}, and their commutators therefore correspond to perturbations of the identity maps as above, see Remark~\ref{rem:perturbations}. The first Melnikov function and its zeros, for deformations of Darboux systems, has been studied in \cite{BM}, \cite{N} etc. under the name \emph{pseudo-abelian integral}.  
\end{problem}

\end{document}